\documentclass[12pt,a4paper,reqno]{amsart}
\usepackage[english]{babel}
\usepackage{amsmath}
\usepackage{amsfonts}
\usepackage{amsthm}
\usepackage{amssymb}
\usepackage[T1]{fontenc}
\usepackage{ae}
\usepackage{epsfig}
\usepackage{graphicx}
\usepackage{times}
\usepackage{latexsym}
\usepackage{xspace}
\usepackage{color}
\usepackage[colorlinks=true,linkcolor=red,citecolor=blue]{hyperref}
\usepackage{accents}
\selectlanguage{english}

\textwidth=430pt
\hoffset=-30pt

\theoremstyle{plain}

\newtheorem{theo}{Theorem}[section]
\newtheorem{pro}[theo]{Proposition}
\newtheorem{lem}[theo]{Lemma}

\newtheorem{cor}[theo]{Corollary}

\theoremstyle{remark}
\newtheorem{remark}[theo]{Remark}

\numberwithin{equation}{theo}

\theoremstyle{teorema}
\newtheorem*{Firmo}{Theorem}
\newtheorem*{teor01}{Theorem (Plante-Thurston)}
\newtheorem*{teor02}{Theorem (Farb-Franks)}
\newtheorem*{teor03}{Theorem (Farb-Franks)}
\newtheorem*{teor04}{Theorem (Navas)}

\theoremstyle{definition}

\newtheorem*{defn*}{Definition}

\newcommand{\Z}{\mathbb{Z}}

\newcommand{\R}{\mathbb{R}}
\newcommand{\RR}{\mathbb{R}^2}

\newcommand{\D}{\mathcal{D}}

\newcommand{\Ss}{\mathbb{S}^{2}}

\newcounter{hours}%
\newcounter{minutes}%

\begin{document}

\thispagestyle{empty}

\title[Fixed points for nilpotent actions on the plane]{Fixed points for nilpotent actions on the plane\\ and the
Cartwright\,-\,Littlewood theorem}
\author{S. Firmo, J. Rib\'on}
\address[S. Firmo and J. Rib\'on]{Instituto de Matem\'atica e Estat\'\i stica \\
Universidade Federal Fluminense, Rua M\'ario Santos Braga s/n - Valonguinho, 24020\,-140 Niter\'oi, RJ - Brasil}

\email{firmo@mat.uff.br}
\email{javier@mat.uff.br}

\author{J. Velasco}
\address[J. Velasco]{Instituto de Matem\'atica e Estat\'\i stica \\
Universidade Estadual do Rio de Janeiro, Rua S\~ao Francisco Xavier 524, 20550\,-\,013  Rio de Janeiro, RJ - Brasil}
\email{jaime@mat.uff.br}

\date{}
\subjclass{37C25, 37E30, 37E45}
\keywords{fixed point, recurrence, nilpotent group, homeomorphism, invariant continuum, winding number}
\thanks{Supported in part by CAPES}

\begin{abstract}
The goal of this paper is proving the existence and then localizing global fixed points for nilpotent groups
generated by homeomorphisms of the plane satisfying a certain Lipschitz condition.
The condition is inspired in a classical result of Bonatti for commuting diffeomorphisms of
the $2$-sphere and in particular it is satisfied by diffeomorphisms, not necessarily of class $C^{1}$,
whose linear part at every point is uniformly close to the identity.
In this same setting we prove a version of the Cartwright-Littlewood theorem,
obtaining fixed points in any continuum preserved by a nilpotent action.
\end{abstract}

\maketitle

\thispagestyle{empty}

\section{Introduction}
In 1989 Bonatti proved that two commuting diffeomorphisms of the sphere
\,${\mathbb S}^{2}$ have a common fixed point if they are $C^{1}$-close to the
identity \cite{bo01}. The result still holds true if the diffeomorphisms are
only $C^{0}$-close to the identity \cite[Handel, 1992]{han01}.
The case of commuting diffeomorphisms can be reinterpreted as the study of
a \,${\mathbb Z}^{2}$-action. In this context Druck-Fang-Firmo
(cf. \cite[2002]{dff02}) generalized Bonatti's theorem for nilpotent actions on the sphere.

Let us consider the plane, i.e. the other example of simply connected surface.
There are orientation-preserving homeomorphisms \,$f$\, of the plane that have no fixed points,
but Brouwer's translation theorem \cite[1912]{bro01} implies
\,$\lim_{n \to \infty} \|f^{n}(z)\|= \infty$\, for any \,$z \in {\mathbb R}^{2}$.
In particular an orientation-preserving homeomorphism of the plane that
preserves a non-empty compact set has a fixed point. In the same spirit
the proof of a theorem of Lima \cite[1964]{elon02}, that provides common singular points for
finite dimensional abelian Lie algebras of vector fields in \,${\mathbb S}^{2}$,
can be adapted naturally to show that if
a topological action of the additive group \,${\mathbb R}^{n}$\, on \,${\mathbb R}^{2}$\,
preserves a non-empty compact set then it has a global fixed point,
i.e. a common fixed point for all homeomorphisms in the action.
Moreover, Lima's original theorem was generalized by Plante \cite[1986]{pl01}:
Every topological action of a connected finite dimensional
nilpotent Lie group on ${\mathbb S}^{2}$
has a global fixed point.

Franks-Handel-Parwani \cite[2007]{fhp01}
extend Brouwer's property in the setting of discrete abelian groups.
More precisely, they
show that a finitely generated abelian subgroup \,$G$\,
of \,$\mathrm{Diff}_{+}^{1}({\mathbb R}^{2})$\, (orientable $C^{1}$-diffeomorphisms)
preserving a non-empty compact set has a global fixed point.
Such result is a fundamental ingredient in their characterization of
abelian groups of diffeomorphisms of \,${\mathbb S}^{2}$\, having a global fixed point.
The first author localized the fixed point if the group is generated by
$C^{1}$-close to the identity diffeomorphisms \cite[2011]{fi05}.
Indeed there exists a fixed point in the convex hull of the closure
of every bounded $G$-orbit.

\vglue10pt

\begin{Firmo}
Let \,$G$\, be an abelian group generated by elements of
$$\big\{ f \in \mathrm{Diff}^1(\RR) \ ; \ \|f(x)-x\| \ , \ \|Df(x)-Id\| < \epsilon
\ \ \mathrm{in} \ \ \RR \big\}. $$
Suppose that there exists a point \,$p \in \RR$ whose $G$-orbit is bounded.
Then there exists \,$q \in \mathrm{Fix}(G) \cap \mathrm{Conv}(\overline{\mathcal{O}_p(G)})$\,,
when \,$\epsilon>0$\, is small enough.
\end{Firmo}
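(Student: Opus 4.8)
The plan is to localize all the relevant dynamics inside the compact convex set $K=\mathrm{Conv}(\overline{\mathcal{O}_p(G)})$ and to produce the common fixed point there by a fixed point index computation made possible by the near-identity hypothesis. First I would record the basic features of the setup: since $\mathcal{O}_p(G)$ is bounded, $E:=\overline{\mathcal{O}_p(G)}$ is a non-empty compact $G$-invariant set contained in $K$, and every generator $f$ satisfies $f(E)=E$. The displacement field $v_f(x)=f(x)-x$ satisfies $\|v_f\|<\epsilon$ and, because $\|Df(x)-Id\|<\epsilon$, it is $\epsilon$-Lipschitz; these two estimates are exactly what will control the turning of $v_f$ along curves. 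Since the sets $\mathrm{Fix}(f)\cap K$ are compact, the finite intersection property reduces the problem to finitely many commuting generators, so I would fix $f_1,\dots,f_n$ and argue by induction on $n$.

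For the base case $n=1$ I would use the nearest-point projection $\pi_K\colon\RR\to K$ and set $\hat f=\pi_K\circ f$, a continuous self-map of the disk $K$. Brouwer's theorem gives a point $x_0$ with $\hat f(x_0)=x_0$; if $x_0\in\mathrm{int}(K)$ then $\pi_K(f(x_0))=x_0$ forces $f(x_0)=x_0$ and we are done. The genuine difficulty is the possibility that every fixed point of $\hat f$ lies on $\partial K$ with $v_f$ pointing strictly outward, that is, into the normal cone to $K$. To exclude this I would run a winding-number argument in the spirit of Bonatti: as a self-map of the contractible set $K$, the sum of the fixed point indices of $\hat f$ equals $1$, while the $\epsilon$-Lipschitz and $\epsilon$-small bounds on $v_f$ control how much its direction can rotate along $\partial K$. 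Because the extreme points of $K$ belong to $E$ and are mapped back into $K$, the displacement is non-outward there, and for $\epsilon$ small the outward boundary fixed points cannot carry the full index $1$; hence some fixed point of $\hat f$ must lie in the interior, producing a genuine fixed point of $f$ in $K$.

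For the inductive step I would exploit commutativity. Writing $G'=\langle f_1,\dots,f_{n-1}\rangle$, the induction hypothesis yields a non-empty compact common fixed set in $K$, and commutativity makes $\mathrm{Fix}(G')$ invariant under $f_n$. The point is then to find, inside this invariant set and inside $K$, a continuum (or a nested family of such) preserved by $f_n$, and to apply the classical plane translation / Cartwright-Littlewood mechanism to obtain a fixed point of $f_n$ lying in $\mathrm{Fix}(G')$, hence in $\mathrm{Fix}(G)$. The near-identity hypothesis is used twice here: to keep the constructed invariant sets inside $K$, so that the projection $\pi_K$ introduces no spurious outward boundary behaviour, and to guarantee, via the same winding-number control as in the base case, that the fixed point index of $f_n$ on the relevant piece of $\mathrm{Fix}(G')$ is nonzero.

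The main obstacle, in both the base case and the inductive step, is this winding-number bookkeeping: one must show that $C^1$-closeness to the identity forces the displacement fields to have precisely enough directional control near $\partial K$ that the fixed point index localizes in the interior and persists under intersection over the commuting generators. Everything else—compactness, convexity of $K$, Brouwer's theorem, and the reduction to finitely many generators—is routine by comparison.
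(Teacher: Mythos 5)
There are two genuine gaps, one in each half of your argument. In the base case, the decisive claim --- that for $\epsilon$ small the boundary fixed points of $\hat f=\pi_K\circ f$ with outward displacement ``cannot carry the full index $1$'' --- is not justified, and it is not a consequence of the smallness and Lipschitz bounds alone: a translation by a vector of norm $\epsilon/2$ is $0$-Lipschitz with respect to the identity, has no fixed points, and yet $\pi_K\circ f$ has exactly one boundary fixed point of index $1$ on any compact convex $K$. The only thing that distinguishes your situation from this one is the recurrence of the orbit inside $K$, and your base case never uses recurrence. The paper's mechanism is different: it passes to an $\omega$-recurrent point $\tilde p\in\overline{\mathcal{O}_p(f)}$, forms the closed polygonal curve $\Gamma^{f}_{\!\!\tilde p,n_k}$ through the orbit, and uses the fact that $\mathrm{Lip}(f-Id)\leq 1/8$ forces consecutive displacement directions to differ by at most $\pi/3$ (Corollary \ref{co:dpi}); hence the winding number of $f-Id$ along a simple closed subcurve equals $1$ and a fixed point lies in the bounded disc, which sits inside $\mathrm{Conv}(\mathcal{O}_{\tilde p}(f))\subset K$. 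The winding number is computed along the orbit polygon, not along $\partial K$, and I do not see how to complete your version along $\partial K$.

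In the inductive step there is a second gap: to apply a Cartwright--Littlewood or index argument to $f_n$ you need a non-empty compact $f_n$-invariant subset of $\mathrm{Fix}(G')$ contained in $K$. Commutativity makes $\mathrm{Fix}(G')$ invariant, but the set $\mathrm{Fix}(G')\cap K$ produced by your induction hypothesis is \emph{not} $f_n$-invariant ($f_n$ may push its points out of $K$), and nothing in your construction guarantees an $f_n$-invariant continuum, or even a bounded $f_n$-orbit, inside $\mathrm{Fix}(G')$. This is precisely the difficulty the paper spends most of its effort on: the notion of capital point, Lemmas \ref{ind:pt:02} and \ref{fechopontocap}, and Proposition \ref{pro:im} exist to show that the common fixed point of the subgroup found at each stage can be chosen with nonzero winding number with respect to a recurrent orbit, hence with its full $G$-orbit trapped in $\mathrm{Conv}(\overline{\mathcal{O}_p(G)})$, so that the induction can continue. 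Your proposal correctly identifies the reduction to finitely many generators, the role of commutativity, and the relevance of winding numbers, but the two steps you flag as ``the main obstacle'' are exactly where the proof lives, and the route you sketch for them does not go through.
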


We denote by \,${\mathcal{O}_p(G)}$\, the orbit of the point \,$p$\, by the group \,$G$.
We denote by   \,$\mathrm{Int}(A)$,
\,$\overline{A}$\, and \,$\mathrm{Conv}(A)$\, the interior, the closure and the convex
hull respectively of a subset \,$A$\, of \,${\mathbb R}^{2}$.
The notation \,$\mathrm{Fix}(G)$\, stands for the set of common fixed points of all elements of \,$G$.
The norm \,$\| \cdot \|$\, defined in \,${\mathbb R}^{2}$ is the norm associated to
the canonical inner product.

The goal of this paper is providing a generalization of the previous theorem
for nilpotent actions
under weaker hypotheses on the generators of \,$G$. More precisely, the generators of \,$G$\, are not required
to be $C^{1}$-diffeomorphisms but just homeomorphisms satisfying a certain
\,{\it Lipschitz condition}.
The Lipschitz condition guarantees the existence of fixed points in the convex hull
of the closure of every bounded $G$-orbit.
The next result is the main theorem of the paper\,:

\vskip10pt

\begin{theo}\label{teorema}
For any \,$\sigma \in {\mathbb Z}^{+}$\, there exists \,$\delta_{\sigma} \in {\mathbb R}^{+}$\,
such that$\,:$
\,If \,$G$\, is a \,$\sigma$-step nilpotent group generated by a family of
\,$\delta_{\sigma}$-Lipschitz with respect to the identity homeomorphisms of the plane
and there exists \,$p \in {\mathbb R}^{2}$ whose $G$-orbit is bounded then there exists
\,$q \in \mathrm{Fix}(G)$\, that belongs either to
\,$\overline{\mathcal{O}_p(G)}$\, or to \,$\mathrm{Int}\big(\mathrm{Conv}(\mathcal{O}_p(G))\big)$.
\end{theo}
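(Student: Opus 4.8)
The plan is to argue by induction on the nilpotency step $\sigma$, using the lower central series $G = \Gamma_1 \supseteq \Gamma_2 \supseteq \cdots \supseteq \Gamma_\sigma \supseteq \Gamma_{\sigma+1} = \{1\}$, where $\Gamma_{j+1} = [G,\Gamma_j]$, together with the fundamental fact that the last nontrivial term $\Gamma_\sigma$ is contained in the center of $G$ (since $[\Gamma_\sigma, G] = \Gamma_{\sigma+1} = \{1\}$) and that $G/\Gamma_\sigma$ is $(\sigma-1)$-step nilpotent. First I would reduce to the finitely generated case: the sets $\mathrm{Fix}(H) \cap \mathrm{Conv}(\overline{\mathcal{O}_p(G)})$, as $H$ ranges over finitely generated subgroups, form a family of compact sets with the finite intersection property, so it suffices to localize a common fixed point for each such $H$ and then pass to the limit. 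I would also fix the compact convex set $K = \mathrm{Conv}(\overline{\mathcal{O}_p(G)})$, which contains every relevant orbit, and prove a strengthened statement by induction that bundles the orbit-localization conclusion together with a Cartwright-Littlewood-type assertion (a nilpotent group of such maps preserving a continuum fixes a point of it), so that the two feed one another.

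The analytic engine is a winding number, or degree, argument for the displacement field. For a single $\delta$-Lipschitz-with-respect-to-the-identity homeomorphism $f$ with a bounded orbit I would study the vector field $z \mapsto f(z)-z$ and compute its index along the boundary of a convex region enclosing the orbit. The Lipschitz condition forces this displacement to vary slowly, so on large scales it mimics the constant displacement of a translation; combined with Brouwer's translation theorem, which forbids fixed-point-free behavior over a compact invariant set, this settles the base case $\sigma = 1$ and yields precisely the dichotomy in the statement. Either the index is nonzero, and degree theory places a fixed point in $\mathrm{Int}\big(\mathrm{Conv}(\mathcal{O}_p(G))\big)$, or the fixed point is recovered as an accumulation point of the orbit and so lies in $\overline{\mathcal{O}_p(G)}$.

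For the inductive step I would exploit the centrality of $\Gamma_\sigma$. Since every $h \in \Gamma_\sigma$ commutes with all of $G$, each $\mathrm{Fix}(h)$ is $G$-invariant, hence so is $\mathrm{Fix}(\Gamma_\sigma) = \bigcap_{h \in \Gamma_\sigma} \mathrm{Fix}(h)$; the base case guarantees it is nonempty and meets $K$, and a suitable accumulation point of the orbit of $p$ supplies an element of $\mathrm{Fix}(\Gamma_\sigma)$ with bounded $G$-orbit. The group $G/\Gamma_\sigma$ is $(\sigma-1)$-step nilpotent and acts on $\mathrm{Fix}(\Gamma_\sigma)$; restricting this action to a $G$-invariant continuum contained in $\mathrm{Fix}(\Gamma_\sigma)$, the continuum form of the induction hypothesis produces a common fixed point there, which is a global fixed point for $G$ in the required location.

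The crux, and the step I expect to fight hardest with, is making these two ingredients quantitatively compatible by controlling commutators. The constant $\delta_\sigma$ must be chosen decreasing in $\sigma$ because the displacement of a $j$-fold commutator of $\delta$-close-to-the-identity maps is of order $\delta^{j}$; nilpotency of step $\sigma$ kills $(\sigma+1)$-fold commutators, while the smallness of $\delta_\sigma$ ensures that the surviving commutators up to order $\sigma$ perturb the displacement fields too little to alter their winding numbers. In this way the index computations for the generators remain valid after passing to $\mathrm{Fix}(\Gamma_\sigma)$, and the Lipschitz hypothesis is inherited by the induced maps on the invariant subset. The delicate topological point is verifying that $\mathrm{Fix}(\Gamma_\sigma)$ contains a genuine $G$-invariant continuum, rather than only scattered components, so that the Cartwright-Littlewood mechanism applies; this I would handle by showing the relevant connected piece is non-separating and, when necessary, adjoining the bounded complementary components it surrounds, which is exactly the maneuver that places the final fixed point either in $\overline{\mathcal{O}_p(G)}$ or in $\mathrm{Int}\big(\mathrm{Conv}(\mathcal{O}_p(G))\big)$.
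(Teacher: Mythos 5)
Your overall skeleton --- induction along a central series, winding-number/degree arguments for the displacement field, and a finite-intersection-property reduction to the finitely generated case --- matches the paper's strategy, but the core of your inductive step does not work as described. You propose to pass to $\mathrm{Fix}(\Gamma_\sigma)$, find a $G$-invariant continuum inside it, and invoke a Cartwright--Littlewood-type hypothesis there. The obstruction is not merely ``delicate'': $\mathrm{Fix}(\Gamma_\sigma)$, and more to the point the closure of the $G$-orbit of a fixed point of $\Gamma_\sigma$, need not contain any continuum at all (it can be totally disconnected), so there is nothing for the Cartwright--Littlewood mechanism to act on, and ``adjoining bounded complementary components'' does not produce an invariant connected set from a disconnected one in any controlled way. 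The paper circumvents exactly this difficulty with the notion of \emph{capital points}: fixed points $q$ of the relevant subgroup for which the winding number $\mathrm{Ind}_q(\Gamma^f_{\!\!p,n_k})$ of the polygonal curves through a recurrent orbit is nonzero. Because these winding numbers are preserved by the other group elements (Lemmas \ref{homot} and \ref{ind:pt:02}), the set of capital points is invariant and every point of $\overline{{\mathcal O}_q(G)}$ stays trapped in the bounded complementary components of ${\mathcal A}_G(p,f)$, hence in $\mathrm{Conv}(\overline{{\mathcal O}_p(G)})$ (Proposition \ref{pro:im}). This replacement of ``invariant continuum'' by ``invariant family of nonzero winding numbers'' is the missing idea; note also that in the paper the Cartwright--Littlewood statement (Theorem \ref{CLnilp}) is \emph{deduced from} Theorem \ref{teorema}, not used to prove it. A second gap is existence: you assert that degree theory ``places a fixed point'' in the enclosed disc, but for $m$ central generators this requires the double-induction nested-disc area argument of Lemma \ref{lem:fdertog} (a Lipschitz version of the Druck--Fang--Firmo Main Lemma); your base case $\sigma=1$ is the full abelian case with arbitrarily many commuting generators, which Brouwer's translation theorem alone does not settle.

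Two smaller corrections. Your heuristic that a $j$-fold commutator of $\delta$-Lipschitz-with-respect-to-the-identity maps has displacement ``of order $\delta^j$'' is not what drives the choice of $\delta_\sigma$ here: item $(iii)$ of Lemma \ref{comutpropr} only gives $\mathrm{Lip}([f,g]-Id)\leq 6\max\{a,b\}$, i.e.\ the Lipschitz constant of a commutator can \emph{grow} by a factor of $6$, which is why $\epsilon_\sigma = \frac{1}{9\times 6^{(\sigma-1)\sigma/2}}$ must decay so fast and why Remark \ref{rem:loss} is needed to keep the generators of $G_{(j)}$ inside ${\mathcal V}_\sigma$. Finally, your finite-intersection reduction needs the quantitative form of Proposition \ref{pro:im} (the $\epsilon_G(p,f)$-neighborhood statement) to guarantee that the fixed points of the finitely generated subgroups lie in a \emph{common} compact set at definite distance from $\partial\,\mathrm{Conv}({\mathcal O}_p(G))$; without that uniformity the limit point could escape to the boundary and the ``$\mathrm{Int}$'' in the conclusion would be lost.
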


Given a group \,$G$, we define inductively
$$G_{(0)}:=G  \quad \mathrm{and} \quad G_{(i+1)}:= [\,G\,,G_{(i)}\,]  $$
where \,$[\,G\,,G_{(i)}\,]$\, is the subgroup of \,$G$\, generated by the commutators of the form
\,$[\,a\,,b\,]:= aba^{-1}b^{-1}$\, with \,$a\in G$\, , \,$b\in G_{(i)}$\, and \,$i\geq 0$\,.
The groups \,$(G_{(j)})_{j \geq 0}$\, are the elements of the \,{\it lower central series}\, of \,$G$.
They are characteristic subgroups of \,$G$\, and in particular normal.
If \,$G_{(\sigma)}$\, is the trivial group for some \,$\sigma \in {\mathbb Z}_{\geq 0}$\,,
we say that \,$G$\, is a \,{\it nilpotent group}. The smallest \,$\sigma \in {\mathbb Z}_{\geq 0}$\, such that
\,$G_{(\sigma)}=\{Id\}$\, is the \,{\it nilpotency class}\, of \,$G$\,;
we say that \,$G$\, is a \,{\it $\sigma$-step nilpotent group}.
Clearly a non-trivial group \,$G$\, is $1$-step nilpotent if and only if it is abelian.

A map \,$g:\RR \rightarrow \RR$\, is \,$k$-{\it Lipschitz}\, if
\,$\|g(p)-g(q)\|\leq k \, \|p-q\|$\, in \,$\RR$.
The map \,$g$\, is \,{\it Lipschitz}\, if it is $k$-Lipschitz for some \,$k \geq 0$.
In such a case we denote by  \,$\mathrm{Lip}(g)$\, the smallest \,$k \geq 0$\,
such that \,$g$\, is $k$-Lipschitz.
We say that
\,$f:\RR\rightarrow \RR$\, is \,{\it $k$-Lipschitz with respect to the identity}\, if
\,$f-Id$\, is $k$-Lipschitz.
This property is satisfied by any differentiable map \,$f:{\mathbb R}^{2} \to {\mathbb R}^{2}$,
not necessarily of class $C^{1}$, with \,$||Df (x) - Id|| \leq k$\,
in \,${\mathbb R}^{2}$.

The maps \,$f:\RR \rightarrow \RR$\, such that \,$\mathrm{Lip}(f-Id)<1$\, are orientation-preserving
homeomorphisms of the plane (Lemma \ref{elondifeo})
that are isotopic to the identity by the barycentric isotopy
\,$F_{t}(x):= t f(x) +(1-t) x$\, (Corollary \ref{isotopia}).
Notice that \,$\{F_{t}\}_{t\in[0,1]}$\, is an isotopy issued from the identity relative to
\,$\mathrm{Fix}(f)$.
This reminds the techniques used by Franks-Handel-Parwani \cite{fhp01, fhp02}
to find global fixed points of abelian actions by $C^{1}$-diffeomorphisms on surfaces.
Indeed a $C^{1}$ orientation-preserving diffeomorphism \,$f$\,
satisfies that \,$f$\, is isotopic to the identity relative to \,$\mathrm{Fix}(f)$\,
in some neighborhood of the accumulation points of \,$\mathrm{Fix}(f)$\,
\cite[Lemma 3.8]{fhp02}. This property is crucial to obtain a well-behaved
Thurston decomposition of the elements of the abelian group.
The $C^{1}$ condition can be interpreted as a local uniformity property.
In our context the
Lipschitz condition plays an analogous role, but in contrast, our uniform condition
is of global nature.

Returning to the Brouwer hypothesis, Cartwright-Littlewood proved that an
orientation-preserving homeomorphism of the
plane possessing an invariant full continuum \,${\mathcal C}$\,
has a fixed point in \,${\mathcal C}$\, \cite[1951]{cl01}.
A \,{\it continuum}\, is by definition a non-empty connected compact subset \,${\mathcal C}$\, of \,$\RR$.
It is a \,{\it full continuum}\, if additionally it satisfies that \,$\RR - {\mathcal C}$\, is connected.
It is \,$G$-{\it invariant}\, if
\,$g({\mathcal C})={\mathcal C}$\, for any \,$g \in G$.
Three years later Hamilton (cf. \cite{hami01}) publishes a \,\emph{short proof}\,
of this result. Brown (cf. \cite[1977]{br05}) gives a  \,\emph{short short proof}\, of Cartwright-Littlewood
theorem, showing that it is a simple consequence of Brouwer's theorem.

There exists another short proof of Cartwright-Littlewood theorem
that has been in general omitted from the references in the literature.
As Guillou (cf. \cite[2012]{gui01}) explains in a recent paper in his
\,{\it Historical remark 1.4}\,:
\,{\it \ldots \ four years later, in the same Annals, Reifenberg
\cite{reif01} explained a short elementary proof due to Brouwer of the same result}\,.

Our proof of Cartwright-Littlewood's theorem, in the $\epsilon$-Lipschitz with respect
to the identity context, is inspired by Brown's idea: existence of fixed points implies localization
in an invariant full continuum. The proof also reminds Reifenberg's arguments since
it relies on calculating some winding numbers associated to certain closed curves.

\begin{theo}\label{CLnilp}
Let \,$G$\, be a $\sigma$-step nilpotent group generated by a family of \,$\delta_{\sigma}$-Lipschitz
with respect to the identity homeomorphisms of the plane, where
\,$\delta_{\sigma}$\, is provided in  Theorem \ref{teorema}.
If \,$\mathcal{C}$\, is a $G$-invariant full continuum then there exists a global fixed point of \,$G$\,
in \,${\mathcal C}$.
\end{theo}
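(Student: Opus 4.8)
The plan is to derive Theorem~\ref{CLnilp} from the existence statement of Theorem~\ref{teorema} by upgrading ``a global fixed point exists'' to ``a global fixed point exists inside \,$\mathcal{C}$'', following Brown's principle that, for a full continuum, existence forces localization. First I would reduce to the delicate case. Since \,$\mathcal{C}$\, is compact and \,$G$-invariant, every \,$p \in \mathcal{C}$\, satisfies \,$\mathcal{O}_p(G) \subseteq \mathcal{C}$\,, so its orbit is bounded and Theorem~\ref{teorema} produces \,$q \in \mathrm{Fix}(G)$\, with \,$q \in \overline{\mathcal{O}_p(G)}$\, or \,$q \in \mathrm{Int}(\mathrm{Conv}(\mathcal{O}_p(G)))$. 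In the first alternative \,$q \in \overline{\mathcal{O}_p(G)} \subseteq \mathcal{C}$\, because \,$\mathcal{C}$\, is closed, and we are done. So I would argue by contradiction, assuming \,$\mathrm{Fix}(G) \cap \mathcal{C} = \emptyset$; then any \,$q$\, furnished by Theorem~\ref{teorema} lies in \,$\mathrm{Conv}(\mathcal{C}) \setminus \mathcal{C}$, and since \,$\mathcal{C}$\, is a full continuum this difference is contained in the single unbounded component \,$U = \RR \setminus \mathcal{C}$.

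Next I would set up the winding numbers. Every generator is \,$\delta_{\sigma}$-Lipschitz with respect to the identity, hence by Lemma~\ref{elondifeo} and Corollary~\ref{isotopia} an orientation-preserving homeomorphism isotopic to the identity through the barycentric isotopy \,$F_t = t\,g + (1-t)\,\mathrm{Id}$, for which \,$\mathrm{Fix}(F_t) = \mathrm{Fix}(g)$\, for all \,$t \in (0,1]$; in particular every \,$g \in G$\, is orientation-preserving and its displacement \,$g - \mathrm{Id}$\, is controlled. Fixing a generator (or a suitable word) \,$g$\, and using that \,$\mathrm{Fix}(g) \cap \mathcal{C} = \emptyset$\, together with the compactness of \,$\mathcal{C}$, I would choose \,$\rho > 0$\, so small that \,$g$\, has no fixed point in the closed \,$\rho$-neighborhood of \,$\mathcal{C}$, and then --- using fullness --- a Jordan curve \,$\gamma$\, in \,$U$\, with \,$\mathcal{C} \subset \mathrm{Int}(\gamma)$\, and \,$\overline{\mathrm{Int}(\gamma)}$\, inside that neighborhood. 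On \,$\gamma$\, the displacement \,$g - \mathrm{Id}$\, does not vanish, so the winding number \,$\mathrm{ind}(g,\gamma)$\, of \,$t \mapsto g(\gamma(t)) - \gamma(t)$\, is defined. The Brown--Reifenberg computation --- which is the content to be carried out --- shows that an orientation-preserving homeomorphism isotopic to the identity leaving the full continuum \,$\mathcal{C}$\, invariant has \,$\mathrm{ind}(g,\gamma) \neq 0$\, (in fact it equals \,$1$); hence \,$g$\, must fix a point of \,$\mathrm{Int}(\gamma)$, and letting \,$\rho \to 0$\, and extracting a limit of such fixed points yields a fixed point of \,$g$\, in \,$\mathcal{C}$. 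This already contradicts \,$\mathrm{Fix}(g) \cap \mathcal{C} = \emptyset$\, for a single map, recovering the classical Cartwright--Littlewood theorem in our setting.

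The real work, and the step I expect to be the main obstacle, is promoting this per-element localization to a \emph{common} fixed point of the whole nilpotent group inside \,$\mathcal{C}$, since a nonzero index around \,$\mathcal{C}$\, for individual elements does not by itself produce a shared fixed point. I would proceed by induction on the nilpotency class \,$\sigma$, mirroring the inductive structure behind Theorem~\ref{teorema}. Writing \,$H = G_{(\sigma-1)}$\, for the last nontrivial term of the lower central series, \,$H$\, is central in \,$G$\, (as \,$[G,H] = G_{(\sigma)} = \{\mathrm{Id}\}$) and abelian; the inductive hypothesis localizes \,$\mathrm{Fix}(H)$\, in \,$\mathcal{C}$, and because \,$H$\, is central the set \,$\mathrm{Fix}(H)$\, is \,$G$-invariant, so \,$\mathrm{Fix}(H) \cap \mathcal{C}$\, is a nonempty compact \,$G$-invariant set on which the induced action of \,$G/H$\, can be analysed. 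The delicate point is that \,$\mathrm{Fix}(H) \cap \mathcal{C}$\, need be neither connected nor full, so the winding-number localization does not transfer verbatim to the next stage; reconciling the global common fixed point supplied by Theorem~\ref{teorema} (accurate only up to the convex hull) with the local, fullness-dependent winding-number argument that pins a fixed point to \,$\mathcal{C}$\, is precisely where the argument must be most careful, and I would expect to re-run the construction of Theorem~\ref{teorema} while carrying along the nonvanishing index around \,$\mathcal{C}$\, at every inductive step.
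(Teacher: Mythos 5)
Your reductions are fine as far as they go (bounded orbits, the easy alternative $q \in \overline{\mathcal{O}_p(G)} \subseteq \mathcal{C}$, and the single-map case via Cartwright--Littlewood), but the proof has a genuine gap exactly at the place you flag as ``the real work'': you never supply the mechanism that forces a \emph{common} fixed point into $\mathcal{C}$. The induction you set up does not close. Taking $H = G_{(\sigma-1)}$ and localizing $\mathrm{Fix}(H)$ in $\mathcal{C}$ leaves you, as you yourself observe, with a compact invariant set $\mathrm{Fix}(H) \cap \mathcal{C}$ that is neither connected nor full, so the fullness-dependent index argument cannot be reapplied to it; and ``re-run the construction of Theorem \ref{teorema} while carrying along the nonvanishing index around $\mathcal{C}$'' is a declaration of intent, not an argument. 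Moreover the index you propose to track, the winding number of the displacement $g - \mathrm{Id}$ along a Jordan curve surrounding $\mathcal{C}$, only detects fixed points of the single map $g$; it is not the quantity that controls where the fixed points of the whole subgroup lie.

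The paper resolves this differently. It first treats the finitely generated case by induction along a central series $g_1, \ldots, g_n$ (one generator at a time, not on the nilpotency class), setting $K_j = \mathrm{Fix}(g_1,\ldots,g_j) \cap \mathcal{C}$ and deriving a contradiction from $K_j \neq \emptyset$, $K_{j+1} = \emptyset$. The contradiction comes from playing two computations against each other for the orbit curves $\Gamma^{g_{j+1}}_{\!\!p,n_k}$ of an $\omega$-recurrent point $p \in K_j$: on one hand, Lemma \ref{lem:index} (whose engine is Lemma \ref{lem:fdertog}, i.e.\ the machinery already built for Theorem \ref{teorema}) produces a point $q_k \in \mathrm{Fix}(\langle g_1,\ldots,g_{j+1}\rangle)$ with $\mathrm{Ind}_{q_k}(\Gamma^{g_{j+1}}_{\!\!p,n_k}) \neq 0$, so the curve is non-null-homotopic in the component $\mathcal{W}$ of $\mathbb{R}^2 - \mathrm{Fix}(G_{j+1})$ containing $\mathcal{C}$ (here $K_{j+1}=\emptyset$ is what puts $\mathcal{C}$ in such a component); on the other hand, lifting the barycentric isotopy of $g_{j+1}$ to the universal cover of $\mathcal{W}$ and anchoring the lift at a fixed point of the single map $g_{j+1}$ in $\mathcal{C}$ (classical Cartwright--Littlewood) shows the lifted curve is closed, i.e.\ the curve \emph{is} null-homotopic in $\mathcal{W}$. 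This covering-space argument is the missing idea in your proposal: it is what converts ``$g_{j+1}$ has a fixed point in $\mathcal{C}$'' plus ``the subgroup has a fixed point enclosed by the orbit curve'' into ``the subgroup has a fixed point in $\mathcal{C}$''. The general (non-finitely-generated) case then follows by the finite intersection property, as in the paper's final paragraph.
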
%

In 2009, Shi-Sun proved \cite{ss01} that every topological action of a nilpotent group on a
uniquely arcwise connected continuum has a global fixed point.
By definition a \,{\it uniquely arcwise connected continuum}\, is a non-empty, compact, connected, metrizable space \,$X$\,
such that any pair of points is connected by a unique path in \,$X$.

Recently, after the completion of this work, the second author proved a version of
Franks-Handel-Parwani's theorem for nilpotent subgroups
of \,$\mathrm{Diff}_{+}^{1}({\mathbb R}^{2})$\, and \,$\mathrm{Diff}_{+}^{1}({\mathbb S}^{2})$\,
(cf. \cite[2012]{ribon01}).

\vglue10pt

\noindent
{\it Acknowledgments}\,.
It is with great pleasure that we thank M\'{a}rio Jorge Dias Carneiro for
suggesting us to replace the condition of derivative
close to the identity in the first version by the actual Lipschitz condition.

%%%%%%%%%%%%%%%%%%%%%%%%%%%%%%%%%%%%%%%%%%%%%%%%%%%%%%%%%%%%%%%
%%%%%%%%%%%%%%%%%%%%%%%%%%%%%%%%%%%%%%%%%%%%%%%%%%%%%%%%%%%%%%%
\vskip30pt
\section{Outline of the proof of Theorem \ref{teorema}}
\vskip5pt
%%%%%%%%%%%%%%%%%%%%%%%%%%%%%%%%%%%%%%%%%%%%%%%%%%%%%%%%%%%%%%%
%%%%%%%%%%%%%%%%%%%%%%%%%%%%%%%%%%%%%%%%%%%%%%%%%%%%%%%%%%%%%%%

Let \,$G$\, be a $\sigma$-step  nilpotent group generated by a family
 \,$\mathcal{S}$\, of  \,$\delta_{\sigma}$-Lipschitz
 with respect to the identity homeomorphisms of the plane.
Let \,$p \in {\mathbb R}^{2}$\, be a point whose $G$-orbit is a bounded set.
The constant \,$\delta_{\sigma}$\, that only depends on the nilpotency class \,$\sigma$\,
will be determined by a series of technical lemmas in section
\ref{ap:a} and appendix \ref{ap:b} about the properties of
homeomorphisms Lipschitz with respect to the identity map.

Our goal is proving Theorem \ref{teo:mai} that is equivalent to
Theorem \ref{teorema}.
Suppose for simplicity that \,$\mathcal{S}=\{f_{1},\ldots,f_{n}\}$\,
is a finite family.
Roughly speaking, the starting point in order to prove Theorem \ref{teo:mai}
is considering that there exists \,$p \in \mathrm{Fix}(G_{(1)})$\,
whose $G$-orbit is bounded.
Such a hypothesis guarantees that the action of \,$G$\, on \,$\overline{{\mathcal O}_{p}(G)}$\,
is by pairwise commuting homeomorphisms.
Then we find a point \,$q_{1} \in \mathrm{Fix}(G_{(1)},f_{1})$\,
such that
\,$\mathcal{O}_{q_{1}}(G) \subset \mathrm{Conv}(\overline{\mathcal{O}_{p}(G)})$\,
(Lemma \ref{lem:clcv} and Proposition \ref{pro:im}).
Analogously we obtain a sequence of points \,$q_{0}=p$\ ,
\,$q_{j} \in \mathrm{Fix}(G_{(1)},f_{1},\hdots,f_{j})$\, for \,$1 \leq j \leq n$\, such that
\,$\mathcal{O}_{q_{j+1}}(G) \subset \mathrm{Conv}(\overline{\mathcal{O}_{q_{j}}(G)})$\,
for any \,$0 \leq j < n$. We can fine-tune the method to show that either
\,$\mathcal{O}_{q_{j+1}}(G) \subset \overline{\mathcal{O}_{q_{j}}(G)}$\, or
\,$\mathrm{Conv}(\overline{\mathcal{O}_{q_{j+1}}(G)}) \subset \mathrm{Int}(\mathrm{Conv}({\mathcal{O}_{q_{j}}(G)}))$\,
for any \,$0 \leq j < n$.
In particular \,$q_{n}$\, is a global fixed point of \,$G$\,
that belongs to
\,$\overline{{\mathcal O}_{p}(G)}$\, or to \,$\mathrm{Int}(\mathrm{Conv}({\mathcal{O}_{p}(G)}))$.
Let us remark that some of the intermediate results in the proof of Theorem
\ref{teo:mai} are generalizations to the Lipschitz setting of results in \cite{bo01,dff02,fi05}\,.

Given a group \,$G$\, and a subset \,$\mathcal{S}$\, of \,$G$\, we define
\,$\langle \mathcal{S} \rangle$\, as the subgroup of \,$G$\, generated by
\,$\mathcal{S}$.
We use in the proof that the class of \,$f_{j+1}$\, in
\,$G/ \langle G_{(1)},f_{1},\hdots,f_{j} \rangle$\, belongs to the center.
Indeed we can replace the roles of \,$\langle G_{(1)},f_{1},\hdots,f_{j} \rangle$\, and \,$f_{j+1}$\,
by a normal subgroup \,$H$\, and an element  \,$f$\, whose class in \,$G/H$\, belongs
to the center of \,$G/H$, denoted by \,$Z(G/H)$.
More precisely, given \,$q \in \mathrm{Fix}(H)$\, whose $G$-orbit is bounded there
exists \,$q' \in \mathrm{Fix}(H,f)$\, such that
\,$\mathcal{O}_{q'}(G) \subset \mathrm{Conv}(\overline{\mathcal{O}_{q}(G)})$\,
(Proposition \ref{pro:im}).
Theorem \ref{teo:mai} is a consequence of the fact that any finitely generated nilpotent group
is a tower of cyclic central extensions.

%%%%%%%%%%%%%%%%%%%%%%%%%%%%%%%%%%%%%%%%%%%%%%%%%%%%%%%%%%
%%%%%%%%%%%%%%%%%%%%%%%%%%%%%%%%%%%%%%%%%%%%%%%%%%%%%%%%%%

\vskip30pt
\section{Properties of the \,$\epsilon$-Lipschitz with respect to the identity
homeomorphisms}
\label{ap:a}
\vglue10pt

%%%%%%%%%%%%%%%%%%%%%%%%%%%%%%%%%%%%%%%%%%%%%%%%%%%%%%%%%%
%%%%%%%%%%%%%%%%%%%%%%%%%%%%%%%%%%%%%%%%%%%%%%%%%%%%%%%%%%

In this section we exhibit some properties of the maps
\,$f:{\mathbb R}^{2} \to {\mathbb R}^{2}$\,
that are \,$\epsilon$-Lipschitz with respect to the identity.
If \,$0< \epsilon<1$\, then \,$f$\, is a homeomorphism that is isotopic
to the identity by the barycentric isotopy.
Moreover the image of a line segment \,$[\,p\,,q\,]$\, by \,$f$\, is
contained in the closed ball of center \,$f(p)$\, and radius \,$\|f(p)-f(q)\|$\,
if \,$\epsilon < 1/(1 +\sqrt{3})$.

\vskip5pt

In order to show that \,$\mathrm{Lip}(f-Id)<1$\, implies that \,$f^{-1}$\, is well-defined
and continuous we use the following lemma (cf. \cite[p. 49]{shub01}).

%%%%%%%%%%%%%%%%%%%%%%%%%%%%%%%%%%%%%%%%%%%%%%%%%%%%%%%%%%%%%%%
%%%%%%%%%%%%%%%%%%%%%%%%%%%%%%%%%%%%%%%%%%%%%%%%%%%%%%%%%%%%%%%

\vskip10pt
\begin{lem}\label{shub:01}
Let \,$h\,,f:X \rightarrow \mathbb{F}$\, be continuous maps
where \,${\mathbb F}$\, is a Banach space and \,$X$\, is a metric space.
Suppose that \,$h$\, is injective, \,$h^{-1}$\, is Lipschitz and \,$f$\, satisfies
\,$\mathrm{Lip}(f-h)< \big[ \mathrm{Lip}(h^{-1})\big]^{-1}$. Then \,$f$\, is an
injective map such that
\begin{align}%\label{}
 \mathrm{Lip}(f^{-1})  \leq \frac{\mathrm{Lip}(h^{-1})}{1-\mathrm{Lip}(f-h)\cdot \mathrm{Lip}(h^{-1})} \,.   &  \notag
\end{align}
\end{lem}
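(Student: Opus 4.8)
The plan is to reinterpret the hypothesis on \,$h^{-1}$\, as a lower bound on the distortion of \,$h$\, and then to absorb the small perturbation \,$f-h$\, into it. Write \,$L:=\mathrm{Lip}(h^{-1})$\, and let \,$d$\, denote the metric on \,$X$. The assumption that \,$h^{-1}$\, is $L$-Lipschitz on \,$h(X)$\, says precisely that
\[
d(x_1,x_2)\leq L\,\|h(x_1)-h(x_2)\| \quad \text{for all } x_1,x_2\in X,
\]
or equivalently \,$\|h(x_1)-h(x_2)\|\geq d(x_1,x_2)/L$. This reverse-Lipschitz inequality for \,$h$\, is the only way the hypothesis will be used, and extracting it is the conceptual heart of the argument.

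Next I would split \,$f=h+(f-h)$\, and set \,$\kappa:=\mathrm{Lip}(f-h)$\,, which satisfies \,$\kappa<1/L$\, by assumption. For arbitrary \,$x_1,x_2\in X$\, the triangle inequality gives
\[
\|f(x_1)-f(x_2)\|\geq \|h(x_1)-h(x_2)\|-\|(f-h)(x_1)-(f-h)(x_2)\| \geq \Big(\frac{1}{L}-\kappa\Big)\,d(x_1,x_2),
\]
where the first bound uses the reverse-Lipschitz inequality for \,$h$\, and the second uses the Lipschitz bound for \,$f-h$. Since \,$\kappa<1/L$\, the factor \,$1/L-\kappa$\, is strictly positive, so \,$f(x_1)=f(x_2)$\, forces \,$x_1=x_2$\,; hence \,$f$\, is injective and \,$f^{-1}$\, is well defined on \,$f(X)$.

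Finally, I would read the estimate off the reverse-Lipschitz bound just obtained. Putting \,$y_i=f(x_i)$\,, i.e. \,$x_i=f^{-1}(y_i)$\,, the inequality \,$\|y_1-y_2\|\geq (1/L-\kappa)\,d\big(f^{-1}(y_1),f^{-1}(y_2)\big)$\, rearranges to
\[
d\big(f^{-1}(y_1),f^{-1}(y_2)\big)\leq \frac{1}{1/L-\kappa}\,\|y_1-y_2\| = \frac{L}{1-\kappa L}\,\|y_1-y_2\|,
\]
which is exactly \,$\mathrm{Lip}(f^{-1})\leq \mathrm{Lip}(h^{-1})\big/\big(1-\mathrm{Lip}(f-h)\cdot\mathrm{Lip}(h^{-1})\big)$. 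There is essentially no obstacle here: the whole proof is a one-line perturbation of a lower bound, and the only point meriting a remark is the degenerate case \,$L=0$\, (which forces \,$X$\, to be a single point, where the statement is trivial and \,$[\mathrm{Lip}(h^{-1})]^{-1}$\, is read as \,$+\infty$).
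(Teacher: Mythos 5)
Your argument is correct and complete: rewriting the hypothesis $\mathrm{Lip}(h^{-1})\leq L$ as the reverse-Lipschitz bound $\|h(x_1)-h(x_2)\|\geq d(x_1,x_2)/L$ and absorbing the perturbation $f-h$ by the triangle inequality is exactly the standard proof of this Lipschitz perturbation lemma, which the paper does not prove itself but simply imports from Shub's book (the cited source argues the same way). Your remark on the degenerate case $L=0$ is a harmless extra precaution; nothing further is needed.
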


\vskip5pt

%%%%%%%%%%%%%%%%%%%%%%%%%%%%%%%%%%%%%%%%%%%%%%%%%%%%%%%%%%%%%%%
%%%%%%%%%%%%%%%%%%%%%%%%%%%%%%%%%%%%%%%%%%%%%%%%%%%%%%%%%%%%%%%

As a straightforward corollary we obtain the next result.

%%%%%%%%%%%%%%%%%%%%%%%%%%%%%%%%%%%%%%%%%%%%%%%%%%%%%%%%%%%%%%%
%%%%%%%%%%%%%%%%%%%%%%%%%%%%%%%%%%%%%%%%%%%%%%%%%%%%%%%%%%%%%%%

\vskip10pt
\begin{cor}\label{co:lip:inv}
Let \,$f:\RR\rightarrow\RR$\, be a map such that \,$\mathrm{Lip}(f-Id)<1$.
Then \,$f$\, is injective and
\begin{align}%\label{}
 \mathrm{Lip}(f^{-1})  \leq \frac{1}{1-\mathrm{Lip}(f-Id)} \,.   &  \notag
\end{align}
\end{cor}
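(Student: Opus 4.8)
The plan is to apply Lemma \ref{shub:01} directly with the choice $h := Id$, the identity map on $\RR$, viewing $\RR$ as a Banach space (and a metric space) so that both hypotheses on $h$ are trivially verified. First I would observe that $Id$ is injective, that its inverse $(Id)^{-1} = Id$ is Lipschitz, and that $\mathrm{Lip}(Id^{-1}) = \mathrm{Lip}(Id) = 1$. With these identifications the compatibility hypothesis $\mathrm{Lip}(f-h) < [\mathrm{Lip}(h^{-1})]^{-1}$ of the lemma becomes $\mathrm{Lip}(f-Id) < 1$, which is exactly our standing assumption. Hence the lemma applies and $f$ is injective.

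Next I would read off the quantitative conclusion by substituting $\mathrm{Lip}(h^{-1}) = 1$ into the displayed bound of Lemma \ref{shub:01}:
\begin{align}
 \mathrm{Lip}(f^{-1}) \leq \frac{\mathrm{Lip}(h^{-1})}{1 - \mathrm{Lip}(f-h)\cdot \mathrm{Lip}(h^{-1})} = \frac{1}{1 - \mathrm{Lip}(f-Id)}\,. & \notag
\end{align}
Since $\mathrm{Lip}(f-Id) < 1$, the denominator is strictly positive, so the right-hand side is a finite nonnegative number; this both confirms that $f^{-1}$ (defined on the image of $f$) is Lipschitz and yields precisely the stated estimate. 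This establishes the corollary.

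There is essentially no obstacle here — the result is labelled a straightforward corollary, and the only thing to be careful about is the bookkeeping: checking that every hypothesis of Lemma \ref{shub:01} is met by the specialization $h = Id$, and that the substitution into the inequality is carried out correctly. The one point worth a remark, if desired, is that the lemma guarantees injectivity of $f$ together with a Lipschitz inverse on its image; the surjectivity of $f$ (hence that $f^{-1}$ is genuinely defined on all of $\RR$) is a separate matter, addressed elsewhere in this section together with the homeomorphism and isotopy statements, and is not needed for the estimate asserted in the corollary itself.
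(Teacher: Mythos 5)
Your proof is correct and is exactly the argument the paper intends: the corollary is stated as an immediate specialization of Lemma \ref{shub:01} to $h = Id$, with $\mathrm{Lip}(Id^{-1}) = 1$ turning the hypothesis into $\mathrm{Lip}(f-Id)<1$ and the displayed bound into the stated estimate. Your closing remark correctly notes that surjectivity is a separate issue handled in Lemma \ref{elondifeo}.
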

\vskip5pt

%%%%%%%%%%%%%%%%%%%%%%%%%%%%%%%%%%%%%%%%%%%%%%%%%%%%%%%%%%%%%%%
%%%%%%%%%%%%%%%%%%%%%%%%%%%%%%%%%%%%%%%%%%%%%%%%%%%%%%%%%%%%%%%

\vskip10pt
\begin{lem}\label{elondifeo}

If \,$f:\RR\rightarrow\RR$\, satisfies \,$\mathrm{Lip}(f-Id)\ <1$\,, then \,$f$\, is a homeomorphism.
\end{lem}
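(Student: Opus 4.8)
The plan is to verify the three defining properties of a homeomorphism in turn: continuity of \,$f$\,, bijectivity, and continuity of \,$f^{-1}$. Continuity of \,$f$\, is immediate, since \,$f=(f-Id)+Id$\, is a sum of continuous maps (every Lipschitz map is continuous). Injectivity and continuity of the inverse are essentially already in hand: by Corollary \ref{co:lip:inv} the map \,$f$\, is injective and \,$\mathrm{Lip}(f^{-1}) \leq 1/\big(1-\mathrm{Lip}(f-Id)\big)$\,, so \,$f^{-1}:f(\RR) \to \RR$\, is Lipschitz and in particular continuous. Thus the only genuinely new point is surjectivity, namely proving \,$f(\RR)=\RR$.

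For surjectivity I would exploit the completeness of \,$\RR$\, together with the contraction property of \,$f-Id$. Fix an arbitrary \,$y \in \RR$\, and consider the map \,$g_{y}:\RR \to \RR$\, defined by \,$g_{y}(x):=y-\big(f(x)-x\big)$. Since \,$g_{y}(x_{1})-g_{y}(x_{2})=-\big[(f-Id)(x_{1})-(f-Id)(x_{2})\big]$\,, the map \,$g_{y}$\, has exactly the Lipschitz constant of \,$f-Id$\,, so \,$\mathrm{Lip}(g_{y})=\mathrm{Lip}(f-Id)<1$\, and \,$g_{y}$\, is a contraction of the complete metric space \,$\RR$.

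By the Banach fixed point theorem, \,$g_{y}$\, admits a unique fixed point \,$x \in \RR$. Unwinding the definition, \,$x=g_{y}(x)=y-f(x)+x$\, yields \,$f(x)=y$. Since \,$y$\, was arbitrary, \,$f$\, is surjective, hence bijective; combined with the continuity of \,$f$\, and of \,$f^{-1}$\, noted above, this shows that \,$f$\, is a homeomorphism.

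The main obstacle — indeed the only step requiring work beyond invoking Corollary \ref{co:lip:inv} — is surjectivity, and the contraction argument above dispatches it cleanly. Note that it relies on the completeness of \,$\RR$\, in an essential way, the same feature that underlies the Lipschitz inverse estimate; the hypothesis \,$\mathrm{Lip}(f-Id)<1$\, is used precisely to make \,$g_{y}$\, a strict contraction.
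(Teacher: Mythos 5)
Your proof is correct, but it follows a genuinely different route from the paper's. The paper also gets injectivity from Corollary \ref{co:lip:inv}, but then invokes the invariance of domain theorem to conclude that \,$f(\RR)$\, is open and that \,$f$\, is a homeomorphism onto its image, and establishes surjectivity indirectly: a Cauchy-sequence argument (using \,$\|f(x_n)-f(x_m)\| \geq (1-\mathrm{Lip}(f-Id))\|x_n-x_m\|$\,) shows \,$f(\RR)$\, is also closed, whence \,$f(\RR)=\RR$\, by connectedness. You instead prove surjectivity directly via the Banach fixed point theorem applied to the contraction \,$g_{y}(x)=y-(f-Id)(x)$\,, and get continuity of \,$f^{-1}$\, from the Lipschitz bound in Corollary \ref{co:lip:inv} rather than from invariance of domain. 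Your argument is more elementary --- it avoids the (nontrivial, dimension-specific) invariance of domain theorem entirely and would work verbatim in any Banach space --- and it is the classical perturbation-of-the-identity argument in the spirit of Lemma \ref{shub:01}. What the paper's route buys is that it reuses machinery already set up (injectivity plus a metric estimate) with no appeal to a fixed point theorem; both proofs are complete and correct.
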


\begin{proof}
The map \,$f$\, is injective by Corollary \ref{co:lip:inv}. Thus the invariance of domain theorem
implies that \,$f({\mathbb R}^{2})$\, is an open set and that \,$f$\, is a homeomorphism onto its image.

Consider a sequence \,$(x_{n})_{n\geq 1}$\, in \,$\RR$\, such that \,$f(x_{n})\rightarrow p$\,.
The Lipschitz condition on \,$f$\, implies
\begin{align}%\label{}
    \big\| \big(f(x_{n})-x_{n}\big) - \big(f(x_{m})-x_{m}\big) \big\|\leq
    \mathrm{Lip}(f-Id) \, \|x_{n}-x_{m} \|   &  \notag
\end{align}
and then
\begin{align}%\label{}
    \big\| f(x_{n})-f(x_{m}) \big\|\geq \big(1-\mathrm{Lip}(f-Id)\big) \, \|x_{n}-x_{m}\| \,.
      &  \notag
\end{align}
We deduce that \,$(x_{n})_{n\geq1}$\, is a Cauchy sequence.
The limit \,$q$\, of \,$(x_{n})_{n\geq1}$\, satisfies \,$f(q)=p$\, since \,$f$\, is continuous.
Therefore \,$f(\RR)$\, is a closed set and \,$f:{\mathbb R}^{2} \to {\mathbb R}^{2}$\,
is a homeomorphism.
\end{proof}

\vskip10pt

Let us consider the homotopy \,$F_{t}(x)= t f(x)+(1-t)x$\, where \,$x\in\RR$\,
and \,$t\in[\,0\,,1\,]$.

\vskip10pt
\begin{cor}\label{isotopia}
Let \,$f:\RR\rightarrow\RR$\, be a map such that \,$\mathrm{Lip}(f-Id)<1$.
Then the homotopy \,$\big\{ F_{t}\big\}_{t\in[0\,,1]}$\,
is an isotopy relative to \,$\mathrm{Fix}(f)$\, of homeomorphisms of the plane.
\end{cor}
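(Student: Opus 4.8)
The plan is to reduce the statement to the single-map result of Lemma \ref{elondifeo} via a computation of Lipschitz constants, and then to address the continuity of the family and of its inverses. First I would record the elementary identity $F_{t}(x) - x = t\big(f(x) - x\big)$, which gives $F_{t} - Id = t\,(f - Id)$ and hence
\[
\mathrm{Lip}(F_{t} - Id) = t\cdot\mathrm{Lip}(f - Id) \leq \mathrm{Lip}(f - Id) < 1
\qquad \text{for every } t \in [0,1].
\]
Lemma \ref{elondifeo} applied to each $F_{t}$ then shows that $F_{t}$ is a homeomorphism of the plane for all $t \in [0,1]$, while the endpoints are $F_{0} = Id$ and $F_{1} = f$.

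The condition \emph{relative to} $\mathrm{Fix}(f)$ is immediate: if $f(x) = x$ then $F_{t}(x) = t x + (1-t) x = x$ for all $t$, so each $F_{t}$ fixes $\mathrm{Fix}(f)$ pointwise. It remains to verify that $\big\{ F_{t}\big\}_{t\in[0,1]}$ is genuinely an isotopy, i.e. that the maps $(t,x) \mapsto F_{t}(x)$ and $(t,x) \mapsto F_{t}^{-1}(x)$ are continuous. The forward map is continuous by inspection, being affine in $t$ with $f$ continuous. For the inverses I would invoke Corollary \ref{co:lip:inv} to obtain the bound
\[
\mathrm{Lip}(F_{t}^{-1}) \leq \frac{1}{1 - \mathrm{Lip}(F_{t} - Id)} \leq \frac{1}{1 - \mathrm{Lip}(f - Id)},
\]
which is uniform in $t$.

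The only nontrivial point, and hence the part I expect to require the most care, is promoting this uniform Lipschitz bound to joint continuity of $(t,x) \mapsto F_{t}^{-1}(x)$. Given $(t_{n}, x_{n}) \to (t_{0}, x_{0})$, I would set $y_{n} = F_{t_{n}}^{-1}(x_{n})$ and first use the uniform bound on $\mathrm{Lip}(F_{t_{n}}^{-1})$, together with the boundedness of $(x_{n})$ and of $F_{t_{n}}(0) = t_{n} f(0)$, to conclude that $(y_{n})$ is bounded. Then, from $\|F_{t_{0}}^{-1}(x_{n}) - y_{n}\| \leq \mathrm{Lip}(F_{t_{0}}^{-1})\,\|x_{n} - F_{t_{0}}(y_{n})\|$ and the decomposition $x_{n} - F_{t_{0}}(y_{n}) = \big(x_{n} - F_{t_{n}}(y_{n})\big) + (t_{n} - t_{0})\big(f(y_{n}) - y_{n}\big)$, the right-hand side tends to $0$ because $(y_{n})$ is bounded and $f - Id$ is Lipschitz. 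Continuity of $F_{t_{0}}^{-1}$ then gives $y_{n} \to F_{t_{0}}^{-1}(x_{0})$, completing the verification that $\big\{ F_{t}\big\}_{t\in[0,1]}$ is an isotopy relative to $\mathrm{Fix}(f)$.
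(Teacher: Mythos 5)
Your proof is correct and follows the same route as the paper: the identity $F_{t}-Id=t\,(f-Id)$ gives $\mathrm{Lip}(F_{t}-Id)=t\cdot\mathrm{Lip}(f-Id)<1$, and Lemma \ref{elondifeo} applies to each $F_{t}$. The paper stops there; your additional verifications (that $\mathrm{Fix}(f)$ is fixed pointwise and that $(t,x)\mapsto F_{t}^{-1}(x)$ is jointly continuous) are details the paper leaves implicit, and your argument for them is sound.
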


\begin{proof}
The map \,$F_{t}$\, satisfies
\,$\mathrm{Lip}(F_{t} - Id) = t \, \mathrm{Lip}(f-Id) <1$ for any $t \in [0,1]$.
The result is a consequence of Lemma \ref{elondifeo}.
\end{proof}

\vglue10pt

Given \,$\sigma \in \Z_{\geq 0}$\, we define\,:

\begin{itemize}

\item
$\epsilon_{0}:=\frac{1}{8}$ \quad  and \quad
$\displaystyle\epsilon_{\sigma} := \frac{1}{9\times6^{(\sigma-1)\sigma/2}}$\,
\ \ for $\sigma >0$\,;\\[-5pt]

\item
$\mathcal{V}_{\sigma}:=\big\{f \in \text{Homeo}(\RR)\; ;  \;
\mathrm{Lip}(f-Id) \leq\epsilon_{\sigma} \big\}$\,;  \\[-5pt]

\item
$\mathcal{U}:=\{f \in \text{Homeo}(\RR)\; ; \;
\mathrm{Lip}(f-Id) \leq \frac{1}{8} \big\}$.

\end{itemize}

\vskip10pt

\begin{remark}\label{obs-1}
The above definitions  imply\,:
\begin{itemize}
\item
$\mathcal{U} = \mathcal{V}_{0} \supset \mathcal{V}_{1}  \supset \cdots \supset \mathcal{V}_{\sigma}\supset \cdots$\,;
\end{itemize}
and
\begin{itemize}
\item
$f^{-1}\in\mathcal{U}$\, if \,$f\in\mathcal{V}_1$. It is a consequence of
item $(ii)$ of Lemma \ref{comutpropr} below.
\end{itemize}

\end{remark}

%%%%%%%%%%%%%%%%%%%%%%%%%%%%%%%%%%%%%%%%%%%%%%%%%%%%%%%%%%%%%%%
%%%%%%%%%%%%%%%%%%%%%%%%%%%%%%%%%%%%%%%%%%%%%%%%%%%%%%%%%%%%%%%

\vskip10pt
\begin{lem}\label{comutpropr}
Let \,$f,g:\RR \rightarrow \RR$\, such that \,$\mathrm{Lip}(f-Id)\leq a$\, and
\,$\mathrm{Lip}(g-Id)\leq b$. Then we obtain$\,:$
\begin{itemize}
\item[$(i)$]
$\displaystyle \mathrm{Lip}(f \circ g-Id)\leq a+b+ab \,;$
\item[$(ii)$]
$\displaystyle \mathrm{Lip}(f^{-1}-Id)\leq {a}/(1-a)$\, if \,$0\leq a<1$\,.
\end{itemize}
Moreover if \,$0\leq a\,,b\leq 1/9$\, we have$\,:$ \rule{0pt}{13pt}

\begin{itemize}
\item[$(iii)$]
$\displaystyle \mathrm{Lip}([\,f\,,g\,]-Id)\leq 6 \max\{a\,,b\} \,;$ \rule{0pt}{13pt}

\item[$(iv)$]
If \,$\sigma \in \Z^{+}$\, and \,$f_{1}\,,\ldots, f_{\sigma+1} \in \mathcal{V}_{\sigma+1}$\, then
\rule{0pt}{15pt}
$$[f_1\,,[f_2\,,\ldots,[f_i\,,f_{i+1}]\ldots]] \in \mathcal{V}_{\sigma} \quad
\text{for \ all} \quad i\in\{1\,,\ldots,\sigma\}\,.$$

\end{itemize}

\end{lem}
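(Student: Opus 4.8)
The plan is to establish the four items of Lemma~\ref{comutpropr} in order, since each later estimate builds on the earlier ones. For item $(i)$, I would write
\begin{align}
(f\circ g)(x) - x = \big(f(g(x)) - g(x)\big) + \big(g(x) - x\big) \notag
\end{align}
and estimate the Lipschitz constant of each summand separately. The first summand is the composition $(f - Id)\circ g$, so I would bound its Lipschitz constant by $\mathrm{Lip}(f-Id)\cdot\mathrm{Lip}(g) \leq a(1+b)$, using that $\mathrm{Lip}(g) \leq \mathrm{Lip}(g-Id) + \mathrm{Lip}(Id) \leq b + 1$; the second summand contributes $b$. Adding gives $a(1+b) + b = a + b + ab$, as claimed. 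For item $(ii)$, I would apply Corollary~\ref{co:lip:inv} together with the identity $f^{-1} - Id = -\big((f-Id)\circ f^{-1}\big)$, so that $\mathrm{Lip}(f^{-1} - Id) \leq \mathrm{Lip}(f-Id)\cdot\mathrm{Lip}(f^{-1}) \leq a\cdot \frac{1}{1-a} = a/(1-a)$.

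For item $(iii)$ I would decompose the commutator $[f,g] = f\circ g \circ f^{-1}\circ g^{-1}$ as a composition of four maps and iterate item $(i)$, but a direct four-fold application is too lossy. Instead the efficient approach is to set $m := \max\{a,b\}$ and first record, via items $(i)$ and $(ii)$, that each of $f, g, f^{-1}, g^{-1}$ lies within Lipschitz distance of the identity bounded in terms of $m$ (for instance $\mathrm{Lip}(f^{-1} - Id) \leq m/(1-m)$). Writing $[f,g] - Id$ as a telescoping sum of the four ``deviation'' terms, where each term is a composition of one factor $(h - Id)$ with other maps of Lipschitz constant close to $1$, I would bound the total by a sum whose leading order is $4m$ plus cross terms. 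The constant $6$ is generous precisely to absorb these cross terms when $a, b \leq 1/9$: one checks that the accumulated bound stays below $6m$ in this range, which is the point of restricting to $1/9$.

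For item $(iv)$ I would argue by induction on $i$, using $(iii)$ as the engine. The base case $i=1$ is the single commutator $[f_1, f_2]$ with both factors in $\mathcal{V}_{\sigma+1}$, hence of Lipschitz-distance at most $\epsilon_{\sigma+1}$ from the identity; item $(iii)$ then gives $\mathrm{Lip}([f_1,f_2] - Id) \leq 6\epsilon_{\sigma+1}$, and the explicit formula for $\epsilon_\sigma$ is designed so that $6\epsilon_{\sigma+1} \leq \epsilon_\sigma$, placing the commutator in $\mathcal{V}_\sigma$. For the inductive step I would let $w := [f_2, \ldots, [f_i, f_{i+1}]\ldots]$ and apply the hypothesis to bound $\mathrm{Lip}(w - Id)$, then apply $(iii)$ to $[f_1, w]$; here the main bookkeeping obstacle is verifying that every intermediate commutator still satisfies the $\leq 1/9$ hypothesis required by $(iii)$, so that the factor-of-$6$ growth is legitimate at each stage. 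The chosen values of $\epsilon_\sigma$, with the exponent $(\sigma-1)\sigma/2$ matching the total number of nested commutators, are exactly calibrated so that after at most $\sigma$ applications of the factor $6$ the final Lipschitz constant still lands inside $\mathcal{V}_\sigma$; confirming this arithmetic is the crux of the argument.
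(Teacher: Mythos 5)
Your proposal is correct and, for items $(i)$, $(ii)$ and $(iv)$, coincides with the paper's proof: the same decompositions $f\circ g - Id = (f-Id)\circ g + (g-Id)$ and $f^{-1}-Id=(Id-f)\circ f^{-1}$, and the same iteration of $(iii)$ through the nested commutators, where the arithmetic to confirm is simply that $6^{i}\,\epsilon_{\sigma+1} = 1/(9\times 6^{\sigma(\sigma+1)/2-i}) \leq 1/(9\times 6^{(\sigma-1)\sigma/2}) = \epsilon_{\sigma}$ for $i\leq\sigma$, with every intermediate constant staying below $1/9$ so that $(iii)$ remains applicable. For item $(iii)$ the paper writes $[f,g]-Id = (f\circ g-g\circ f)\circ f^{-1}\circ g^{-1}$ and expands $f\circ g-g\circ f$ into four deviation terms, obtaining $\mathrm{Lip}([f,g]-Id)\leq 2(a+b+ab)/((1-a)(1-b))$, which is only cosmetically different from your telescoping of the four factors of $f\circ g\circ f^{-1}\circ g^{-1}$; both land below $6\max\{a,b\}$ when $a,b\leq 1/9$. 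One small correction: the direct four-fold application of item $(i)$, with $(ii)$ supplying the constants for the inverses, is not in fact too lossy --- it gives $\mathrm{Lip}([f,g]-Id)\leq (1+a)(1+b)/((1-a)(1-b))-1 = 2(a+b)/((1-a)(1-b)) \leq (81/16)\max\{a,b\} < 6\max\{a,b\}$ for $a,b\leq 1/9$ --- so the workaround you describe, while valid, is unnecessary.
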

\vskip5pt

%%%%%%%%%%%%%%%%%%%%%%%%%%%%%%%%%%%%%%%%%%%%%%%%%%%%%%%%%%%%%%%
%%%%%%%%%%%%%%%%%%%%%%%%%%%%%%%%%%%%%%%%%%%%%%%%%%%%%%%%%%%%%%%

\begin{proof}
Let us show item $(i)$. Since
\[ f \circ g - Id = (f - Id) \circ g + (g - Id) \quad \text{and} \quad
\mathrm{Lip}(g) \leq \mathrm{Lip}(g - Id) + 1 \]%
we obtain
\,$\mathrm{Lip}(f \circ g - Id) \leq a (b+1) + b = a + b +ab$\,.

\vglue5pt

Suppose that \,$\mathrm{Lip}(f-Id)\leq a$\, with \,$0\leq a<1$.
The map \,$f$\, is a homeomorphism by Lemma \ref{elondifeo}. Moreover, we obtain
\[ \mathrm{Lip}(f^{-1} - Id) = \mathrm{Lip}((Id - f) \circ f^{-1}) \leq
a \cdot \mathrm{Lip}(f^{-1}) \leq \frac{a}{1-a}  \]
by Corollary \ref{co:lip:inv}.

\vglue5pt

Let us show item $(iii)$. We denote \,$A=\max\{a\,,b\}$. Since
\[ (f \circ g - g \circ f) = (f-Id) \circ g - (g - Id) \circ f + (g - Id) - (f - Id) \]
we obtain
\[ \mathrm{Lip}(f \circ g - g \circ f) \leq a (b+1) + b (a+1) + a + b = 2(ab +a +b)  \]
and
\[ \mathrm{Lip}([f,g] - Id) = \mathrm{Lip}[(f \circ g - g \circ f) \circ f^{-1} \circ g^{-1}]
\leq 2 \frac{ab + a +b}{(1-a)(1-b)} \leq 2 A \frac{A+2}{(1-A)^{2}} \]
by Corollary \ref{co:lip:inv}. It is straightforward to check out that the right hand side
of the above inequality is less or equal than \,$6 A$\, if \,$A \leq 1/9$.

Let \,$f_{1}\,,\ldots,f_{\sigma+1}\in \mathcal{V}_{\sigma+1}$\, and \,$1\leq i\leq \sigma$.
We apply the item $(iii)$ to obtain
$$\mathrm{Lip}([\,f_{i}\,,f_{i+1}\,]-Id)\leq
\frac{1}{9 \times 6^{(\sigma(\sigma+1)/2)-1}} \,.$$
Moreover, successive applications of item $(iii)$ allow to show
$$\mathrm{Lip}([f_1\,,[f_2\,,\ldots,[f_i\,,f_{i+1}]\ldots]]-Id)
\leq \frac{1}{9 \times 6^{(\sigma(\sigma+1)/2)-i}}\leq
\frac{1}{9 \times 6^{(\sigma-1)\sigma/2}}=\epsilon_{\sigma}$$
for any \,$1\leq i\leq \sigma$, completing the proof of item $(iv)$.
\end{proof}

\vskip10pt

We denote by \,$\mathrm{Cone}(q\,,v\,,\theta)$\, the cone of vertex
\,$q$\,, with axis equal to \,$\{ t v \ ; \ t \in {\mathbb R}_{\geq 0} \}$\,
and describing an angle of \,$2 \theta$\, radians where
\,$v \in {\mathbb S}^{1} \subset {\mathbb R}^{2}$, i.e.
$$\mathrm{Cone}(q\,,v\,,\theta):=\big\{q+u \ ; \  u\in\RR \ \ \mathrm{and} \ \
\text{Ang}(u\,,v)\leq \theta \big\}$$
where \,$0<\theta<\pi/2$\, and \,$\mathrm{Ang}(u\,,v)$\,
is by definition the angle comprised between the vectors \,$u$\, and \,$v$.

\vskip10pt

The next two lemmas are useful to control the image of segments \,$[\,p\,,q\,]$\, by mappings
\,$\epsilon$-Lipschitz with respect to the identity.

%%%%%%%%%%%%%%%%%%%%%%%%%%%%%%%%%%%%%%%%%%%%%%%%%%%%%%%%%%%%%%%
%%%%%%%%%%%%%%%%%%%%%%%%%%%%%%%%%%%%%%%%%%%%%%%%%%%%%%%%%%%%%%%

\vskip10pt
\begin{lem}\label{cc:01}
Let \,$0<\epsilon<1$ , $v\in\mathbb{S}^{1}$\, and \,$\mu \in {\mathbb R}^{+}$.
Consider a curve \,$\gamma:[\,0\,,\mu\,]\rightarrow\RR$\, such that
\,$\big\|\big(\gamma(t)-tv\big) - \big(\gamma(s)-sv \big)\big\|\leq \epsilon \, |t-s|$\,
for all \,$s\,,t\in[\,0\,,\mu\,]$\,. Then the curve \,$\gamma$\, is contained in
\,$\mathrm{Cone}\big(\gamma(0)\,,v\,,\arctan\big(\frac{\epsilon}{1-\epsilon}\big)\big)$\, and
it is injective. In particular, if \,$0 < \epsilon < 1/2$\, we obtain
\,$\gamma (\mu) \neq \gamma (0)$\, and \,$\gamma$\, is contained in
$$\mathrm{Cone}
\Big(\gamma(0)\,,\frac{\gamma(\mu)-\gamma(0)}{\|\gamma(\mu)-\gamma(0)\|}\,,2\arctan\big(\frac{\epsilon}{1-\epsilon}\big)\Big)\,.$$
\end{lem}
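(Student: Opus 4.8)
The plan is to reduce everything to the elementary geometry of the decomposition \,$\gamma(t)=tv+\beta(t)$\,, where \,$\beta(t):=\gamma(t)-tv$. The hypothesis says precisely that \,$\beta$\, is \,$\epsilon$-Lipschitz, so taking \,$s=0$\, gives \,$\|\beta(t)-\beta(0)\|\leq \epsilon\, t$\, for every \,$t\in[\,0\,,\mu\,]$.

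First I would establish the cone containment. Fix \,$t\in(0\,,\mu]$\, and write \,$w:=\beta(t)-\beta(0)$\,, so that \,$\gamma(t)-\gamma(0)=tv+w$\, with \,$\|w\|\leq \epsilon\, t$. Decomposing \,$w=\langle w,v\rangle\, v+w_{\perp}$\, into its components along and orthogonal to \,$v$\,, we get \,$\gamma(t)-\gamma(0)=(t+\langle w,v\rangle)\, v+w_{\perp}$. Since \,$|\langle w,v\rangle|\leq \|w\|\leq \epsilon\, t$\, and \,$\epsilon<1$\,, the coefficient satisfies \,$t+\langle w,v\rangle\geq (1-\epsilon)\, t>0$\,, while \,$\|w_{\perp}\|\leq \|w\|\leq \epsilon\, t$. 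Hence the angle \,$\theta$\, between \,$\gamma(t)-\gamma(0)$\, and \,$v$\, obeys \,$\tan\theta=\|w_{\perp}\|/(t+\langle w,v\rangle)\leq \epsilon/(1-\epsilon)$\,, so \,$\gamma(t)$\, lies in \,$\mathrm{Cone}\big(\gamma(0)\,,v\,,\arctan(\epsilon/(1-\epsilon))\big)$. The case \,$t=0$\, gives the vertex, which belongs to the cone trivially.

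Injectivity is then immediate: if \,$\gamma(t)=\gamma(s)$\, with \,$s<t$\,, then \,$(t-s)v=-(\beta(t)-\beta(s))$\,, whence \,$t-s=\|(t-s)v\|\leq \epsilon(t-s)$\,, forcing \,$\epsilon\geq 1$\,, a contradiction. For the last assertion, under \,$\epsilon<1/2$\, this injectivity already yields \,$\gamma(\mu)\neq\gamma(0)$\,, so the axis \,$u:=(\gamma(\mu)-\gamma(0))/\|\gamma(\mu)-\gamma(0)\|$\, of the second cone is well-defined. By the first part, both \,$\gamma(t)-\gamma(0)$\, (for any \,$t$) and \,$\gamma(\mu)-\gamma(0)$\, make angle at most \,$\arctan(\epsilon/(1-\epsilon))$\, with \,$v$\,; since \,$\mathrm{Ang}$\, is the geodesic distance on \,$\mathbb{S}^{1}$\, and thus satisfies the triangle inequality, \,$\gamma(t)-\gamma(0)$\, makes angle at most \,$2\arctan(\epsilon/(1-\epsilon))$\, with \,$u$\,, placing \,$\gamma(t)$\, in the claimed cone.

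The only genuinely delicate point is the angle estimate. One must verify that \,$\epsilon<1$\, keeps the \,$v$-component \,$t+\langle w,v\rangle$\, strictly positive (so that the \,$\arctan$ formula is valid and the angle is acute), and one must invoke the triangle inequality for \,$\mathrm{Ang}$\, on the circle to pass from the \,$v$-cone to the cone around \,$u$. The hypothesis \,$\epsilon<1/2$\, enters only to guarantee \,$\arctan(\epsilon/(1-\epsilon))<\pi/4$\,, so that the doubled angle stays below \,$\pi/2$\, and the second cone is admissible in the sense of the definition; everything else is routine.
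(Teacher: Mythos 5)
Your proof is correct and follows essentially the same route as the paper's: the orthogonal decomposition of \,$w=\beta(t)-\beta(0)$\, along and across \,$v$\, is the coordinate-free version of the paper's computation with \,$x(t)$\, and \,$y(t)$\, after an isometric change making \,$v=e_{1}$\,, and the key estimate \,$\tan\theta\leq \epsilon/(1-\epsilon)$\, is identical, as is the use of the angular triangle inequality (left implicit in the paper) for the final cone. The only minor divergence is injectivity, which you obtain directly from \,$(t-s)=\|\beta(t)-\beta(s)\|\leq\epsilon(t-s)$\, rather than by citing the injectivity of the first coordinate via Lemma \ref{shub:01}; both are fine.
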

%\vskip5pt

%%%%%%%%%%%%%%%%%%%%%%%%%%%%%%%%%%%%%%%%%%%%%%%%%%%%%%%%%%%%%%%
%%%%%%%%%%%%%%%%%%%%%%%%%%%%%%%%%%%%%%%%%%%%%%%%%%%%%%%%%%%%%%%

\begin{proof}
Up to an isometric change of coordinates we can suppose that \,$\gamma(0)$\, is the origin and
\,$v=(1\,,0)=e_{1}$. Thus the curve \,$\gamma(t)=\big(x(t)\,,y(t)\big)\in\RR$\, satisfies
\[ \big\| \big(\gamma(t)-te_{1}\big)-\big(\gamma(s)-se_{1}\big)\big\|\leq \epsilon \, |t-s|\,
\ \  \mathrm{in} \ \ [\,0\,,\mu\,]. \]
In particular we obtain \,$x(0)=y(0)=0$\,  and
\[
    \big| y(t) - y(s) \big| \leq \epsilon \, |t-s| \quad \mathrm{and} \quad
    \big| (x(t)-t) - (x(s) -s) \big| \leq \epsilon \, |t-s|\,
\ \ \mathrm{in} \  \ [\,0\,,\mu\,]. \]
The function \,$x(t)$\, is a non-negative function that is also
$\epsilon$-Lipschitz with respect to the identity. We obtain that \,$x(t)$\, is
injective by Lemma \ref{shub:01}.
Thus \,$\gamma$\, is an injective path.
Moreover, the inequality
\[ \frac{|y(t)|}{x(t)} \leq \frac{\epsilon t}{x(t)-t+t} \leq \frac{\epsilon t}{t(1-\epsilon)} =
\frac{\epsilon}{1-\epsilon} \]
implies the first statement in the lemma.

Since  \,$\gamma$\, is injective and \,$\gamma(0) \neq \gamma(\mu)$\,
then \,$\gamma$\, is contained in
$$\mathrm{Cone}\Big(\gamma(0)\,,\frac{\gamma(\mu)-\gamma(0)}{\|\gamma(\mu)-\gamma(0)\|}\,,2\arctan\big(\frac{\epsilon}{1-\epsilon}\big)\Big) \quad \text{if} \quad 0<\epsilon<1/2$$
completing the proof of the lemma.
\end{proof}

%%%%%%%%%%%%%%%%%%%%%%%%%%%%%%%%%%%%%%%%%%%%%%%%%%%%%%%%%%%%%%%
%%%%%%%%%%%%%%%%%%%%%%%%%%%%%%%%%%%%%%%%%%%%%%%%%%%%%%%%%%%%%%%

\vskip10pt
\begin{lem}\label{inj:los}
Let \,$f:\RR\rightarrow\RR$\, with \,$\mathrm{Lip}(f-Id)\leq \epsilon$.
Consider two different points \,$p\,,q\in\RR$. Then we have$\,:$
\begin{itemize}

\item[$(i)$]
for any \,$\lambda \in [\,p\,,q\,]$\, the point
\,$f(\lambda)$\, belongs to the intersection of the cones 
$$\mathrm{Cone}\Big(f(p)\,,\frac{w}{\|w\|}\,, 2\arctan\big(\frac{\epsilon}{1-\epsilon}\big)\Big)  \quad
\text{and} \quad \mathrm{Cone}\Big(f(q)\,,-\frac{w}{\|w\|}\,, 2\arctan\big(\frac{\epsilon}{1-\epsilon}\big)\Big)$$
if \,$0<\epsilon<\frac{1}{2}$\, where \,$w=f(q)-f(p) \,;$

\item[$(ii)$]
$\|f(\lambda)-f(p)\|\leq \|f(q)-f(p)\| \quad \text{for \ all} \quad \lambda \in [\,p\,,q\,]$\, \ \  if \rule{0pt}{13pt}
\ $0<\epsilon < \frac{1}{1+\sqrt{3}}$\,.   \rule{0pt}{17pt}

\end{itemize}
\end{lem}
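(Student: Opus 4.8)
The plan is to reduce part $(i)$ to Lemma \ref{cc:01} by restricting $f$ to the segment $[p,q]$, and then to extract part $(ii)$ purely from the geometry of the two cones produced in $(i)$.

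For part $(i)$ I would set $v=(q-p)/\|q-p\|$ and $\mu=\|q-p\|$ and define the curve $\gamma:[0,\mu]\to\RR$ by $\gamma(t)=f(p+tv)$. The key observation is that the $\epsilon$-Lipschitz-with-respect-to-the-identity hypothesis on $f$ translates verbatim into the hypothesis of Lemma \ref{cc:01}: writing $x=p+tv$ and $y=p+sv$ one checks the identity $(\gamma(t)-tv)-(\gamma(s)-sv)=(f(x)-x)-(f(y)-y)$, and since $\|x-y\|=|t-s|$ this norm is at most $\epsilon\,|t-s|$. Because $0<\epsilon<1/2$, Lemma \ref{cc:01} then places $\gamma$ inside $\mathrm{Cone}(f(p),w/\|w\|,2\arctan(\epsilon/(1-\epsilon)))$, where $w=\gamma(\mu)-\gamma(0)=f(q)-f(p)$ is nonzero since $f$ is injective (Corollary \ref{co:lip:inv}). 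Every $\lambda\in[p,q]$ equals $p+tv$ for $t=\|\lambda-p\|\in[0,\mu]$, so $f(\lambda)=\gamma(t)$ lies in this first cone. The second cone is obtained by the symmetric choice of running the segment from $q$ to $p$: applying the same argument to $\tilde\gamma(t)=f(q-tv)$ gives containment in $\mathrm{Cone}(f(q),-w/\|w\|,2\arctan(\epsilon/(1-\epsilon)))$. Hence $f(\lambda)$ lies in the intersection of the two cones.

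For part $(ii)$ I would discard the curve and keep only the conclusion of $(i)$: $f(\lambda)$ belongs to the intersection $R$ of the two cones of common half-angle $\alpha:=2\arctan(\epsilon/(1-\epsilon))$, one with vertex $f(p)$ pointing toward $f(q)$ and the other with vertex $f(q)$ pointing toward $f(p)$. Placing $f(p)$ at the origin and $f(q)$ at $(L,0)$ with $L=\|w\|=\|f(q)-f(p)\|$, this intersection is exactly the rhombus with vertices $(0,0)$, $(L,0)$ and $(L/2,\pm(L/2)\tan\alpha)$. Since $R$ is convex and distance to $f(p)$ is a convex function, the distance from $f(p)$ to any point of $R$, in particular to $f(\lambda)$, is maximized at a vertex; the vertex distances are $0$, $L$ and $(L/2)\sec\alpha$. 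Thus $\|f(\lambda)-f(p)\|\le\max\{L,(L/2)\sec\alpha\}$, which equals $L$ precisely when $\sec\alpha\le 2$, i.e. $\alpha\le\pi/3$.

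It then remains to match this angular threshold with the hypothesis $0<\epsilon<1/(1+\sqrt{3})$. The condition $\alpha=2\arctan(\epsilon/(1-\epsilon))\le\pi/3$ is equivalent to $\arctan(\epsilon/(1-\epsilon))\le\pi/6$, i.e. $\epsilon/(1-\epsilon)\le\tan(\pi/6)=1/\sqrt{3}$, which rearranges to $\epsilon(1+\sqrt{3})\le 1$. So exactly in the stated range the farthest vertex $(L/2,\pm(L/2)\tan\alpha)$ sits at distance at most $L$ from $f(p)$, giving the desired inequality. The only genuinely delicate point is recognizing that the intersection of the two \emph{unbounded} cones is a bounded convex rhombus, so that the maximal distance is controlled by finitely many vertices; everything else is the routine translation of the Lipschitz estimate into Lemma \ref{cc:01} together with an elementary trigonometric comparison. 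I would present $(i)$ first and then invoke it as a black box for $(ii)$.
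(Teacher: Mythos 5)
Your proposal is correct and follows essentially the same route as the paper: part $(i)$ is reduced to Lemma \ref{cc:01} via the curve $\gamma(t)=f(p+tv)$ and its reversal, and part $(ii)$ follows from the observation that for $0<\epsilon\leq 1/(1+\sqrt{3})$ the intersection of the two cones lies in the closed ball of center $f(p)$ and radius $\|f(q)-f(p)\|$. The only difference is that you carry out explicitly (via the rhombus vertices and the threshold $\sec\alpha\leq 2$, i.e. $\alpha\leq\pi/3$) the elementary computation that the paper leaves as a remark, and your computation is accurate.
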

%\vskip5pt

%%%%%%%%%%%%%%%%%%%%%%%%%%%%%%%%%%%%%%%%%%%%%%%%%%%%%%%%%%%%%%%
%%%%%%%%%%%%%%%%%%%%%%%%%%%%%%%%%%%%%%%%%%%%%%%%%%%%%%%%%%%%%%%

\begin{proof}
We denote \,$v=\frac{q-p}{\|q-p\|}$\, and  \,$\gamma(t)=f(p+tv)$\, for
\,$t\in [\,0\,,\|q-p\|\,]$. We have
$$\gamma(0)=f(p) \quad \mathrm{and} \quad \gamma(\|q-p\|)=f(q) \,.$$
Since  \,$f$\, is $\epsilon$-Lipschitz with respect to the identity we obtain
 \begin{align}%\label{}
    \big\| \big( \gamma(t)-tv \big) - \big( \gamma(s)-sv \big) \big\| & =
    \big\| \big( f(p+tv)-(p+tv)\big) -
     \big( f(p+sv)-(p+sv)\big) \big\|    \notag \\
    &  \leq \epsilon \, |t-s|        \notag
\end{align}
for all \,$s\,,t \in [\,0\,,\|q-p\|\,]$\,.

The points \,$f(p)$\, and \,$f(q)$\, are different if \,$0<\epsilon<1$\, by Lemma \ref{elondifeo}.
Moreover \,$\gamma$\, is contained in the cone
$$\mathrm{Cone}\Big(f(p)\,,\frac{w}{\|w\|}\,, 2\arctan\big(\frac{\epsilon}{1-\epsilon}\big)\Big)
\quad \mathrm{if} \quad 0<\epsilon < 1/2$$
by Lemma \ref{cc:01}.
We deduce that \,$\gamma$\, is contained in the cone
$$\mathrm{Cone}\Big(f(q)\,,-\frac{w}{\|w\|}\,, 2\arctan\big(\frac{\epsilon}{1-\epsilon}\big)\Big)
\quad \mathrm{if} \quad 0<\epsilon < 1/2$$
by interchanging the roles of \,$p$\, and \,$q$.

The proof is completed by noticing that when \,$0<\epsilon\leq1/(1+\sqrt{3})$\,
the intersection of the two cones defined above
is contained in the closed ball of center \,$f(p)$\, and radius \,$\|f(q)-f(p)\|$.
\end{proof}

%%%%%%%%%%%%%%%%%%%%%%%%%%%%%%%%%%%%%%%%%%%%%%%%%%%%%%%%
%%%%%%%%%%%%%%%%%%%%%%%%%%%%%%%%%%%%%%%%%%%%%%%%%%%%%%%%

\vskip30pt
\section{Flow-like properties of homeomorphisms $\epsilon$-Lipschitz with respect
to the identity}
\label{secabel}
\vglue5pt

%%%%%%%%%%%%%%%%%%%%%%%%%%%%%%%%%%%%%%%%%%%%%%%%%%%%%%%%
%%%%%%%%%%%%%%%%%%%%%%%%%%%%%%%%%%%%%%%%%%%%%%%%%%%%%%%%

Let \,$f$\, be a homeomorphism of \,$\RR$\, and \,$p \in \RR-\text{Fix}(f)$.
Consider an increasing sequence \,$(n_{k})_{k \geq 1}$\, of positive integer numbers.
Let us clarify that when we write \,$f^{n_{k}}(p) \to p$\, we are always assuming that
the sequence \,$f^{n_{k}}(p)$\, converges to \,$p$\, when \,$k$\, tends to \,$\infty$.

Given \,$m \geq 2$\, we denote by \,$\Gamma^f_{p,m}$\, the closed oriented curve obtained by
juxtaposing the line segments
$$[\,f(p)\,,f^2(p)\,]\,,[\,f^2(p)\,,f^3(p)\,]\,,\ldots,[\,f^{m-1}(p)\,,f^m(p)\,]\,,
[\,f^m(p)\,,f(p)\,]$$
where a segment \,$[\,f^{i}(p)\,,f^{i+1}(p)\,]$\, is oriented from
\,$f^{i}(p)$\, to \,$f^{i+1}(p)$.
The points
\,$f(p)$\,, $f^2(p)$\,, $\ldots,$ $f^m(p)$\, are the  \,{\it{vertices}}\, of \,$\Gamma^f_{p,m}$.
Analogously we define the curve \,$\Gamma^{f}_{\!\!p}$\, as the oriented curve obtained by
juxtaposing the segments \,$[\,f^{i}(p)\,,f^{i+1}(p)\,]$\, for \,$i\in\Z$\,.
We denote by \,$B(\,p\,,r\,)$\, (resp. \,$B[\,p\,,r\,]$\,) the open (resp. closed) ball of center
$p$ and radius \,$r>0$\,.

\vskip10pt

Roughly speaking the curves \,$\Gamma^{f}_{\!\!p}$\, can be interpreted
as the trajectories of a continuous dynamical system containing the orbits of \,$f$.
A significant issue is that even in very simple cases, for instance when
\,$f$\, belongs to the center of \,$G$, another element of the group does not preserve this superimposed
structure. More precisely we have \,$h(\Gamma_{p}^{f}) \neq \Gamma_{h(p)}^{f}$\,
for general \,$p \in {\mathbb R}^{2} - \mathrm{Fix}(f)$\, and \,$h \in G$.
This does not constitute a problem, since the continuous dynamical system
is preserved up to homotopy relative to \,$\mathrm{Fix}(f)$.

\vskip10pt

\begin{lem}\label{homot}
Let \,$G \subset \mathrm{Homeo}(\mathbb{R}^2)$\, be a group.
Consider a normal subgroup \,$L$\, of \,$G$\, and \,$f \in {\mathcal U} \cap G$\, such that
the class of \,$f$\, in \,$G/L$\, belongs to \,$Z(G/L)$.
Let \,$p \in \mathrm{Fix}(L) - \mathrm{Fix}(f)$\, and \,$h \in \mathcal{U}\cap G$.
Suppose that there exists a sequence \,$(n_{k})_{k \geq 1}$\, of positive integers
such that \,$f^{n_{k}}(p) \to p$.
Then there exists a homotopy in \,${\mathbb R}^{2} - \mathrm{Fix}(f)$\, relative to vertices
between the curves
\,$\Gamma_{h(p),n_{k}}^{f}$\, and \,$h(\Gamma_{\!\!p,n_{k}}^{f})$\, for \,$k$\, big enough.
\end{lem}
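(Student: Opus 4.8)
The plan is to first reduce the statement to the observation that the two curves share the same vertices, and then to build the homotopy edge by edge through straight-line interpolation, the only genuine task being to keep each interpolation away from \,$\mathrm{Fix}(f)$.

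First I would establish the algebraic identity
$$f^{i}(h(p))=h(f^{i}(p)) \qquad \text{for every } i\geq 0. $$
Since the class of \,$f$\, in \,$G/L$\, is central, the element \,$\ell_{i}:=[f^{-i},h^{-1}]=f^{-i}h^{-1}f^{i}h$\, lies in \,$L$, and a direct computation gives \,$f^{i}h=hf^{i}\ell_{i}$. Evaluating at \,$p$\, and using \,$p\in\mathrm{Fix}(L)$, so that \,$\ell_{i}(p)=p$, yields the identity. Consequently the vertices \,$A_{i}:=f^{i}(h(p))=h(f^{i}(p))$\, of \,$\Gamma^{f}_{h(p),n_{k}}$\, and of \,$h(\Gamma^{f}_{p,n_{k}})$\, coincide, which is exactly what makes a homotopy relative to vertices meaningful. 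Moreover no \,$A_{i}$\, is fixed by \,$f$: if \,$f(A_{i})=A_{i}$\, then \,$h(f^{i+1}(p))=h(f^{i}(p))$, hence \,$f^{i}(p)\in\mathrm{Fix}(f)$, against \,$p\notin\mathrm{Fix}(f)$; the same argument rules out \,$A_{0}=h(p)\in\mathrm{Fix}(f)$.

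The basic geometric tool is the estimate that for any \,$w\notin\mathrm{Fix}(f)$\, the closed ball \,$B[\,w\,,\|f(w)-w\|\,]$\, misses \,$\mathrm{Fix}(f)$: indeed, for \,$z\in\mathrm{Fix}(f)$\, one has \,$\|f(w)-w\|=\|(f-Id)(w)-(f-Id)(z)\|\leq \tfrac{1}{8}\|w-z\|$\, because \,$f\in\mathcal{U}$, so \,$\|w-z\|\geq 8\|f(w)-w\|$\, and the ball stays at distance at least \,$7\|f(w)-w\|$\, from \,$\mathrm{Fix}(f)$. Now for \,$1\leq i<n_{k}$\, both the segment \,$[A_{i},A_{i+1}]$\, and the arc \,$h([f^{i}(p),f^{i+1}(p)])$\, join \,$A_{i}$\, to \,$A_{i+1}$; the arc is contained in \,$B[\,A_{i}\,,\|A_{i+1}-A_{i}\|\,]$\, by Lemma \ref{inj:los}\,$(ii)$ applied to \,$h\in\mathcal{U}$\, (note \,$\|A_{i+1}-A_{i}\|=\|f(A_{i})-A_{i}\|$), and so is the segment. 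The straight-line homotopy between the two runs inside the convex hull of their union, hence inside this ball, and therefore avoids \,$\mathrm{Fix}(f)$\, while fixing the endpoints \,$A_{i},A_{i+1}$.

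The main obstacle is the closing edge, joining \,$A_{n_{k}}$\, to \,$A_{1}$, which is a long segment rather than a short one, so the ball argument does not apply directly; this is precisely where the hypothesis \,$f^{n_{k}}(p)\to p$\, enters. As \,$k\to\infty$\, the segment \,$[A_{n_{k}},A_{1}]$\, and the arc \,$h([f^{n_{k}}(p),f(p)])$\, converge in the Hausdorff metric to \,$[A_{0},A_{1}]$\, and \,$h([p,f(p)])$\, respectively, both contained in \,$B[\,A_{0}\,,\|A_{1}-A_{0}\|\,]$, which misses \,$\mathrm{Fix}(f)$\, with a definite margin by the estimate above. Hence the compact set \,$\mathrm{Conv}\big([A_{n_{k}},A_{1}]\cup h([f^{n_{k}}(p),f(p)])\big)$\, converges to a compact set disjoint from \,$\mathrm{Fix}(f)$, so for \,$k$\, large enough it too is disjoint from \,$\mathrm{Fix}(f)$, and the straight-line homotopy of the closing edge stays in \,$\mathbb{R}^{2}-\mathrm{Fix}(f)$. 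Juxtaposing the edgewise homotopies then produces the required homotopy relative to vertices, valid for all large \,$k$.
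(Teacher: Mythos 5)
Your proposal is correct and follows essentially the same route as the paper: the identity $h(f^{i}(p))=f^{i}(h(p))$ from centrality of $f$ in $G/L$ plus $p\in\mathrm{Fix}(L)$, containment of each edge pair in a ball $B[A_{i},\|f(A_{i})-A_{i}\|]$ via Lemma \ref{inj:los}$(ii)$, exclusion of $\mathrm{Fix}(f)$ from that ball (the paper cites Corollary \ref{co:dpi} where you redo the Lipschitz estimate directly), and a separate treatment of the closing edge using $f^{n_{k}}(p)\to p$ to fit it into a slightly larger fixed-point-free ball for $k$ large. The differences are only cosmetic.
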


\begin{proof}
The commutator \,$[\,h^{-1},f^{-j}\,]$\, belongs to \,$L$\, for any
\,$j \in {\mathbb Z}$\,
since \,$f \in Z(G/L)$.
The point \,$p$\, is a global fixed point of \,$L$, hence we have
$$h(f^j(p))=f^j(h(p)) \quad \mathrm{for \ all} \quad j \in \mathbb{Z} \,.$$
Given \,$j \in {\mathbb Z}$\, we apply Lemma \ref{inj:los} to the map \,$f$\, and
the segment \,$[\,f^j(p)\,,f^{j+1}(p)\,]$.
We obtain
$$\|h(\lambda)-f^j(h(p))\|=\|h(\lambda)-h(f^j(p))\| \leq \|h(f^{j+1}(p))-h(f^j(p))\|$$
$$\leq \|f^{j+1}(h(p))-f^j(h(p))\|$$
for any \,$\lambda \in [\,f^j(p)\,,f^{j+1}(p)\,]$.
In particular \,$h [\,f^j(p)\,,f^{j+1}(p)\,]$\, is contained in the closed ball \,$B_{j}$\, of center
\,$f^{j}(h(p))$\,
and radius \,$\|f^{j+1}(h(p))-f^j(h(p))\| \,$.
Moreover, corollary \ref{co:dpi} implies that \,$B_{j}$\, does not contain fixed points of \,$f$.
The curves
\[ [\,f^{j}(h(p))\,,f^{j+1}(h(p))\,]\, \ \mathrm{and} \ \,h\big([\,f^j(p)\,,f^{j+1}(p)\,]\big) \]
are contained in \,$B_{j}$\,, therefore they are homotopic via an homotopy relative
to ends in \,$B_{j}$.

\vglue10pt

{\hfil
\setlength{\unitlength}{1mm}
\begin{picture}(132,42)(-1,20)
\put(0,0){\includegraphics[scale=0.40]{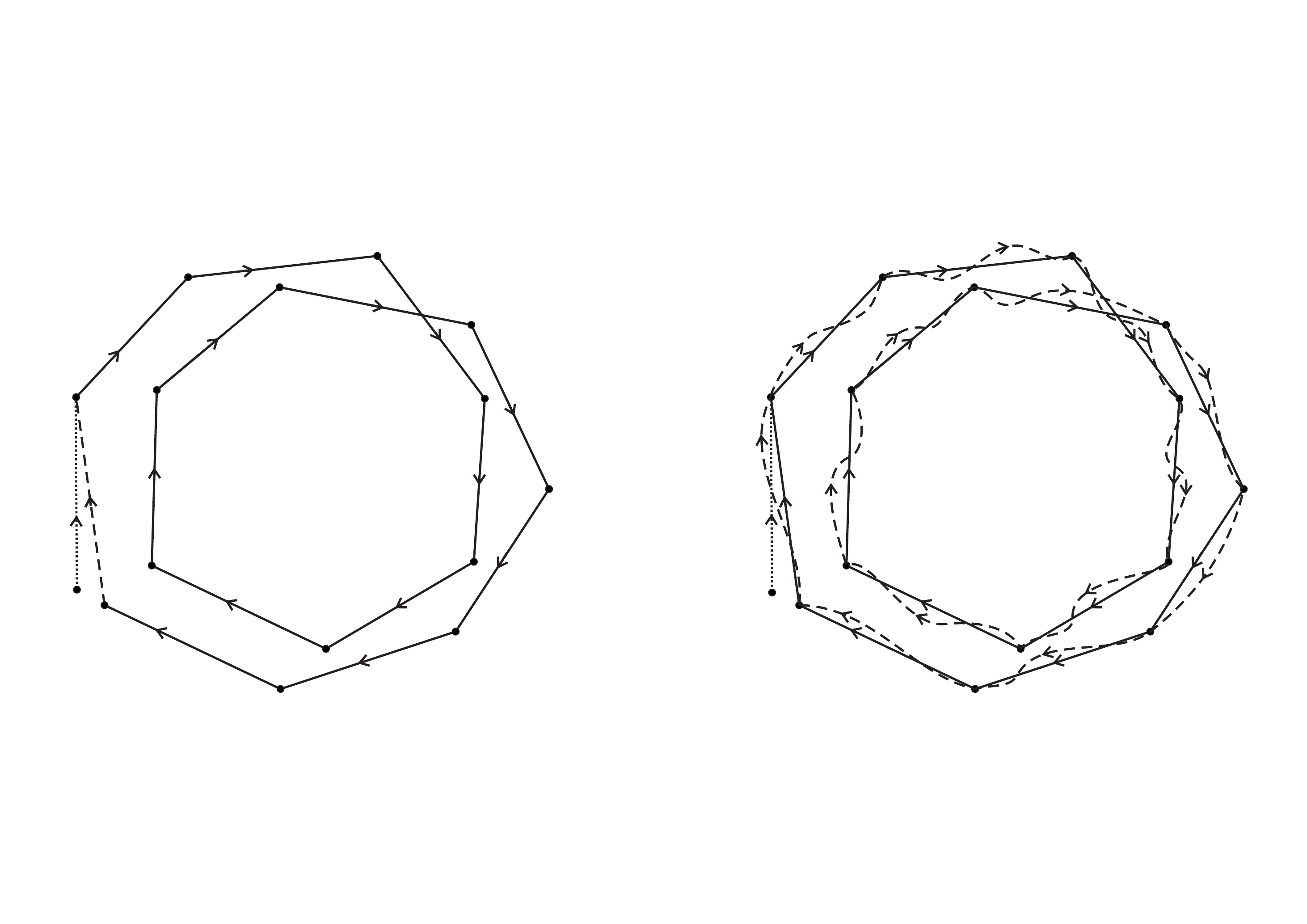}}%
\put(4,30){\footnotesize$p$}
\put(63,29){\footnotesize$h(p)$}
\put(-1,47){\footnotesize$f(p)$}
\put(57,47){\footnotesize$f(h(p))$}
\put(7,59){\footnotesize$f^2(p)$}
\put(66,59){\footnotesize$f^2(h(p))$}
\put(44,54){\footnotesize$f^j(p)$}

\put(107,56){\footnotesize$\swarrow$}
\put(110,57){\footnotesize$f^j(h(p))$}

\put(114,41){\footnotesize$\swarrow$}
\put(114,44){\footnotesize$f^{j+1}(h(p))$}

\put(50,38){\footnotesize$f^{j+1}(p)$}
\put(5,22){\footnotesize$f^{n_k}(p)$}
\put(5.5,25.5){\footnotesize$\nearrow$}
\put(45,28){$\Gamma^f_{\!\!p,n_k}$}
\put(111,32){$\leftarrow h\big(\Gamma^f_{\!\!p,n_k}\big)$}

\put(67,21){\footnotesize$f^{n_k}(h(p))$}
\put(68.5,25.5){\footnotesize$\nearrow$}

\put(82,49){$\nwarrow$}
\put(87,47){$\Gamma^f_{\!\!h(p),n_k}$}

\end{picture}}

\vglue10pt

It remains to show that
\begin{equation}
\label{equ:curves}
[\,f^{n_k}(h(p))\,,f(h(p))\,] \;\; \mathrm{and} \;\; h \big([\,f^{n_k}(p)\,,f(p)\,]\big)
\end{equation}
are also homotopic relative to ends in \,${\mathbb R}^{2} - \mathrm{Fix}(f)$.
By the first part of the proof we have that
\,$[\,h(p)\,,f(h(p))\,]$\, and \,$h([\,p\,,f(p)\,])$\, are contained in \,$B_{0}$\,.
Since \,$f^{n_{k}}(p) \to p$\, and \,$f^{n_{k}}(h(p)) = h(f^{n_{k}}(p)) \to h(p)$\,
we deduce that the curves in expression (\ref{equ:curves}) are contained in
the closed ball of center \,$h(p)$\, and radius \,$2 \|f(h(p))-h(p) \| \,$
for \,$k >>0$.
We argue as above since such a ball does not contain points of \,$\mathrm{Fix}(f)$\,
by Corollary \ref{co:dpi}.

\end{proof}

Some properties of \,$\Gamma^{f}_{\!\!p}$\, and \,${\mathcal O}_{p}(f)$\, are analogous.
For instance next lemma
applied to \,$K=\overline{{\mathcal O}_{p}(f)}$\,
implies that a fixed point of \,$f$\, is in the closure of the $f$-orbit
of \,$p$\, if and only if it belongs to the closure of  \,$\Gamma^{f}_{\!\!p}$.
\begin{lem}
\label{lem:iso}
Let \,$f \in {\mathcal U}$. Consider a compact set \,$K$\, and a point
\,$q \in \mathrm{Fix}(f) - K$.
Then there exists \,$\delta >0$\, such that \,$B(q\,,\delta) \cap [\,y\,,f(y)\,]=\emptyset$\,
for any \,$y \in K$.
\end{lem}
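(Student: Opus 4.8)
The plan is to exploit the two hypotheses simultaneously: that $f$ is $\tfrac{1}{8}$-Lipschitz with respect to the identity (since $f\in\mathcal{U}$) and that $q$ is a fixed point, i.e. $f(q)=q$. Combined, these force the displacement $\|f(y)-y\|$ to be controlled by the distance from $y$ to $q$, so that the segments $[\,y\,,f(y)\,]$ contract toward $K$ and cannot reach a neighborhood of $q$. To begin, I would use that $K$ is compact and $q\notin K$ to set
\[ d:=\inf_{y\in K}\|y-q\|>0, \]
so that $\|y-q\|\geq d$ for every $y\in K$.

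The key estimate comes next. Since $f-Id$ is $\tfrac{1}{8}$-Lipschitz and $f(q)-q=0$, for every $y\in K$ we obtain
\[ \|f(y)-y\|=\big\|\big(f(y)-y\big)-\big(f(q)-q\big)\big\|\leq \tfrac{1}{8}\,\|y-q\|. \]
Then, for an arbitrary point $\lambda\in[\,y\,,f(y)\,]$, writing $\lambda=y+t\big(f(y)-y\big)$ with $t\in[0,1]$ gives $\|\lambda-y\|\leq\|f(y)-y\|\leq\tfrac{1}{8}\|y-q\|$, and the triangle inequality yields
\[ \|\lambda-q\|\geq\|y-q\|-\|\lambda-y\|\geq\tfrac{7}{8}\,\|y-q\|\geq\tfrac{7}{8}\,d. \]

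Finally, choosing $\delta:=\tfrac{7}{8}d$ (or any smaller positive value) guarantees $\|\lambda-q\|\geq\delta$ for every $\lambda\in[\,y\,,f(y)\,]$ and every $y\in K$, so that $B(q\,,\delta)\cap[\,y\,,f(y)\,]=\emptyset$, which is the claim. I do not expect a genuine obstacle here; the whole content is the elementary but crucial observation that the Lipschitz-with-respect-to-the-identity condition makes the displacement of $y$ vanish proportionally to its distance from the fixed point $q$, which is what keeps the segments uniformly away from $q$.
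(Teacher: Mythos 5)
Your proof is correct, and it rests on the same mechanism as the paper's: the Lipschitz-with-respect-to-the-identity condition together with $f(q)=q$ forces $\|f(y)-y\|\leq \tfrac{1}{8}\|y-q\|$, which keeps the segment $[\,y\,,f(y)\,]$ away from $q$. The only difference is organizational: the paper argues by contradiction via Corollary \ref{co:dpi} (no fixed points in $B[\,y\,,4\|f(y)-y\|\,]$), whereas you inline the estimate directly and obtain the explicit value $\delta=\tfrac{7}{8}\,d(q,K)$.
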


\begin{proof}
There exists \,$\delta>0$\, such that \,$B(q,2 \delta) \cap K = \emptyset$.
We claim that there is no point \,$z$\, in \,$B(q,\delta) \cap [y,f(y)]$\,
for any \,$y \in K$.
Otherwise \,$y \not \in \mathrm{Fix}(f)$\, and
the length of \,$[\,y\,,f(y)\,]$\, is equal to the sum of the lengths
of \,$[\,y\,,z\,]$\, and \,$[\,z\,,f(y)\,]$\, and as a consequence greater than \,$\delta$.
Since
\[ ||y-q|| \leq ||y-z|| + ||z-q|| < ||y-f(y)|| +\delta  < 2 ||y-f(y)|| \]
we obtain a contradiction with Corollary \ref{co:dpi}.
\end{proof}

Next we define a concept of fixed point of a homeomorphism
\,$f$\, of \,$\RR$\, enclosed by an orbit of \,$f$.

%%%%%%%%%%%%%%%%%%%%%%%%%%%%%%%%%%%%%%%%%%%%%%%%%%%%%%%%%%%%%%%
%%%%%%%%%%%%%%%%%%%%%%%%%%%%%%%%%%%%%%%%%%%%%%%%%%%%%%%%%%%%%%%

\begin{defn*}
Let \,$f$\, be a homeomorphism of \,$\RR$\, and \,$p \in \RR-\mathrm{Fix}(f)$.
We say that a point \,$q \in \mathrm{Fix}(f)$\,
is a \,{\it capital point}\, for \,$\mathcal{O}_p(f)$\, if there exists
an increasing sequence of positive integers \,$(n_{k})_{k\geq1}$\, such that\,:
\begin{itemize}
\item $f^{n_k}(p)\to p\,;$
\item $\mathrm{Ind}_q(\Gamma^f_{\!\!p,n_k})$\, is a well-defined non-vanishing integer number for
\,$k >>0$.

\end{itemize}
\end{defn*}

%%%%%%%%%%%%%%%%%%%%%%%%%%%%%%%%%%%%%%%%%%%%%%%%%%%%%%%%%%%%%%%
%%%%%%%%%%%%%%%%%%%%%%%%%%%%%%%%%%%%%%%%%%%%%%%%%%%%%%%%%%%%%%%

\vglue5pt

In the above definition  \,$\mathrm{Ind}_q(\Gamma^f_{\!\!p,n_k})$\, stands for the
\,\emph{winding number}\, of the curve \,$\Gamma^f_{\!\!p,n_k}$\, with respect to the
point \,$q$.

{\hfil
\setlength{\unitlength}{1mm}
\begin{picture}(60,48)(12,70)
\put(0,0){\includegraphics[scale=0.40]{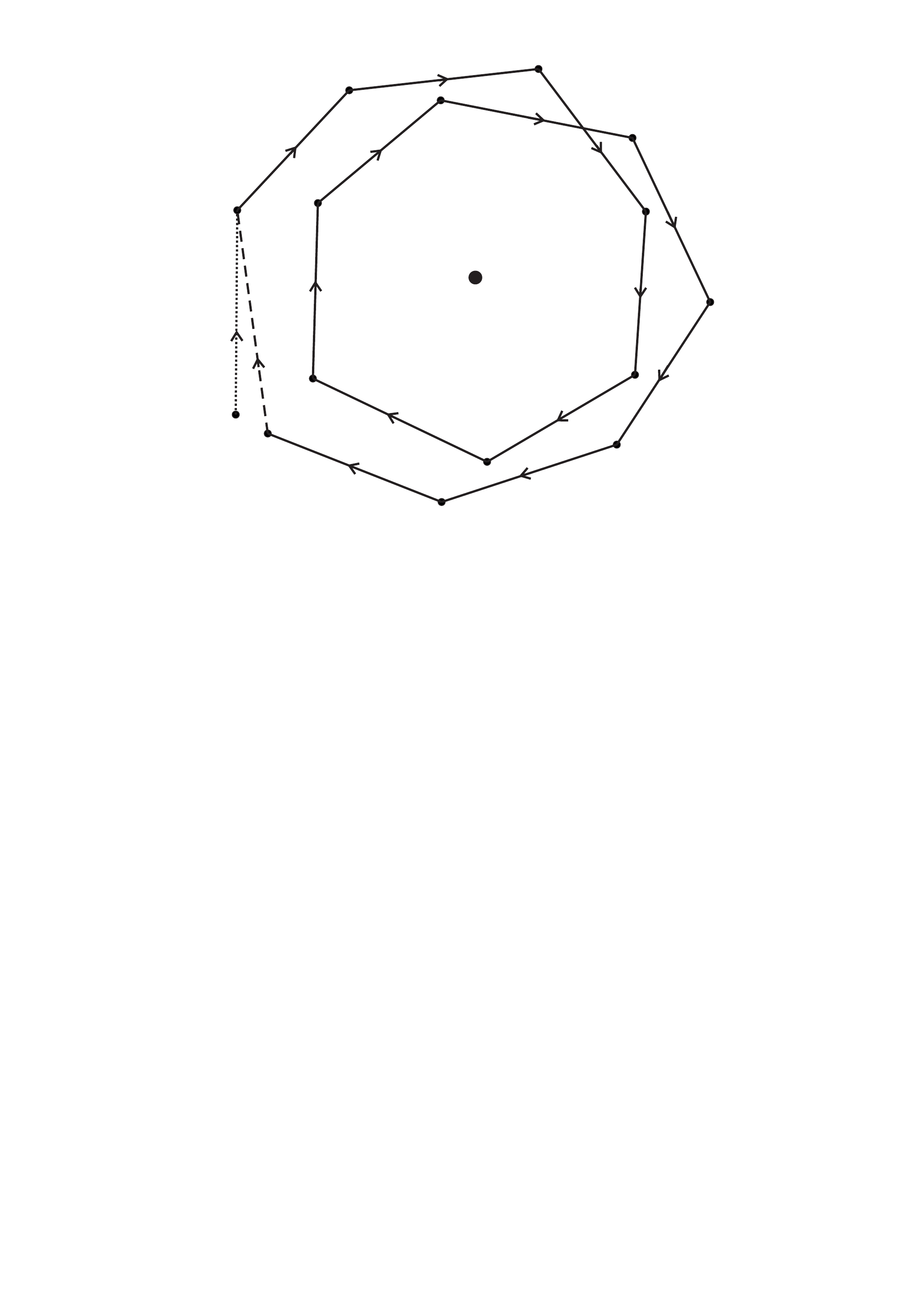}}%
\put(40,90){\footnotesize$q \in \mathrm{Fix}(f)$}
\put(18.5,80){\footnotesize$p$}
\put(14,99){\footnotesize$f(p)$}
\put(22,110){\footnotesize$f^2(p)$}
\put(50,113){\footnotesize$f^3(p)$}
\put(20,76){\footnotesize$\nearrow$}
\put(17,72){\footnotesize$f^{n_k}(p)$}
\put(63,96){$\Gamma^f_{\!\!p,n_k}$}
\end{picture}}

\vglue10pt

Lemmas \ref{ind:pt:02} and \ref{fechopontocap} show that the set of capital points
is closed and invariant by the action of subgroups under suitable conditions.
The next remark will be useful in the proofs.

\vskip10pt

\begin{remark}\label{inv:fix}
Let \,$G \subset \mathrm{Homeo}(\mathbb{R}^2)$\, be a group.
Consider a normal subgroup \,$L$\, of \,$G$\, and \,$f \in G$\, such that
the class of \,$f$\, in \,$G/L$\, belongs to \,$Z(G/L)$.
Then \,$\langle L\,,f \rangle$\, is a normal subgroup of \,$G$\,
and the set \,$\mathrm{Fix}(L\,,f)$\, is $G$-invariant.
\end{remark}
%\vskip5pt

\vglue5pt

\begin{lem}\label{ind:pt:02}
Let \,$G \subset \mathrm{Homeo}(\mathbb{R}^2)$\, be a group.
Consider a normal subgroup \,$L$\, of \,$G$\, and \,$f \in {\mathcal U} \cap G$\, such that
the class of \,$f$\, in \,$G/L$\, belongs to \,$Z(G/L)$.
Let \,$p \in \mathrm{Fix}(L) - \mathrm{Fix}(f)$\, and \,$h \in \mathcal{V}_{1} \cap G$.
Consider a capital point \,$q \in \mathrm{Fix}(L\,,f)$\, for \,$\mathcal{O}_p(f)$.
Then \,$h^{\ell}(q) \in \mathrm{Fix}(L\,,f)$\, is a capital point for
\,$\mathcal{O}_{h^{\ell}(p)}(f)$\, for any \,$\ell \in \mathbb{Z}$\,.
\end{lem}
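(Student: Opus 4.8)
The plan is to reduce the statement to a single step, namely to prove that if $q'$ is a capital point for $\mathcal{O}_{p'}(f)$ with $p' \in \mathrm{Fix}(L) - \mathrm{Fix}(f)$ and $q' \in \mathrm{Fix}(L,f)$, then $h(q')$ is a capital point for $\mathcal{O}_{h(p')}(f)$; the general case then follows by iterating this with $h$ for $\ell > 0$ and with $h^{-1}$ for $\ell < 0$. Before doing so I would record two invariance facts. First, since $L$ is normal in $G$ the set $\mathrm{Fix}(L)$ is $G$-invariant, and by Remark \ref{inv:fix} so is $\mathrm{Fix}(L,f)$; in particular $h^{\ell}(p) \in \mathrm{Fix}(L)$ and $h^{\ell}(q) \in \mathrm{Fix}(L,f)$ for every $\ell$. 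Second, I would repeat the commutator computation from the proof of Lemma \ref{homot} with an arbitrary $g \in G$ in place of $h$: since the class of $f$ in $G/L$ is central, $[\,g^{-1},f^{-j}\,] \in L$, and evaluating this at $x \in \mathrm{Fix}(L)$ gives $g(f^{j}(x)) = f^{j}(g(x))$ for all $j \in \Z$. Applying this with $g = h^{\ell}$ and $x = p$ shows that the $f$-orbit of $h^{\ell}(p)$ is the $h^{\ell}$-image of the $f$-orbit of $p$, so $f^{n_{k}}(h^{\ell}(p)) = h^{\ell}(f^{n_{k}}(p)) \to h^{\ell}(p)$ by continuity of $h^{\ell}$, while injectivity of $h^{\ell}$ forces $h^{\ell}(p) \notin \mathrm{Fix}(f)$. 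Thus all the hypotheses of the lemma propagate along the orbit, which is exactly what makes the induction possible.

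The heart of the argument is the single step, and it is a winding-number computation. Fix $p' = h^{\ell}(p)$ and $q' = h^{\ell}(q)$, and let $(n_{k})$ be a sequence witnessing that $q'$ is capital for $\mathcal{O}_{p'}(f)$. Applying Lemma \ref{homot} to the base point $p' \in \mathrm{Fix}(L) - \mathrm{Fix}(f)$ and the map $h \in \mathcal{U} \cap G$ produces, for $k$ large, a homotopy relative to vertices in $\RR - \mathrm{Fix}(f)$ between $\Gamma^{f}_{h(p'),n_{k}}$ and $h(\Gamma^{f}_{p',n_{k}})$. The point $h(q') = h^{\ell+1}(q)$ lies in $\mathrm{Fix}(f)$, hence off this homotopy, so the winding numbers around it of the two curves coincide. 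Since $\mathrm{Lip}(h-Id) < 1$, Corollary \ref{isotopia} shows that $h$ is isotopic to the identity and in particular orientation-preserving, so winding numbers are transported by $h$, giving $\mathrm{Ind}_{h(q')}\big(h(\Gamma^{f}_{p',n_{k}})\big) = \mathrm{Ind}_{q'}\big(\Gamma^{f}_{p',n_{k}}\big)$. Chaining these equalities yields $\mathrm{Ind}_{h(q')}(\Gamma^{f}_{h(p'),n_{k}}) = \mathrm{Ind}_{q'}(\Gamma^{f}_{p',n_{k}})$, a well-defined non-vanishing integer for $k \gg 0$; together with $f^{n_{k}}(h(p')) \to h(p')$ this shows that $h(q')$ is capital for $\mathcal{O}_{h(p')}(f)$.

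Finally I would assemble the induction. Starting from $\ell = 0$ and applying the single step repeatedly with $h$ handles all $\ell \geq 0$; for $\ell < 0$ one applies the single step with $h^{-1}$ in place of $h$. This is the one place where the hypothesis $h \in \mathcal{V}_{1}$, rather than merely $h \in \mathcal{U}$, is used: by Remark \ref{obs-1} it guarantees $h^{-1} \in \mathcal{U} \cap G$, which is exactly what Lemma \ref{homot} requires, and $h^{-1}$ is again orientation-preserving. I expect the main obstacle to be precisely that one cannot feed $h^{\ell}$ into Lemma \ref{homot} directly, since $\mathrm{Lip}(h^{\ell}-Id)$ may exceed the $\mathcal{U}$-threshold $1/8$ as $\ell$ grows; the remedy is to advance one power at a time, which is legitimate only because $\mathrm{Fix}(L)$ and $\mathrm{Fix}(L,f)$ are $G$-invariant and the defining properties of a capital point propagate from $(p',q')$ to $(h^{\pm 1}(p'), h^{\pm 1}(q'))$ via the winding-number identity above.
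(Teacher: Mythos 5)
Your proposal is correct and follows essentially the same route as the paper's own proof: the single-step reduction via Lemma \ref{homot}, the transport of winding numbers by the orientation-preserving map $h$, the identity $h(f^{j}(p))=f^{j}(h(p))$ coming from centrality of $f$ modulo $L$, and the use of $h^{-1}\in\mathcal{U}$ (from $h\in\mathcal{V}_{1}$) to handle negative powers one step at a time. Your explicit remark that one cannot apply Lemma \ref{homot} to $h^{\ell}$ directly because $\mathrm{Lip}(h^{\ell}-Id)$ may grow is exactly the reason the paper also iterates power by power.
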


\begin{proof}
The point \,$h(q)$\, belongs to \,$\mathrm{Fix}(L\,,f)$\, by Remark \ref{inv:fix}.
Moreover, since \,$q$\, is a capital point for \,$\mathcal{O}_p(f)$\,
there exists an increasing sequence \,$(n_{k})_{k \geq 1}$\, of positive integers such that\,:
\begin{itemize}
\item $f^{n_k}(p) \rightarrow p\,;$
\item $\mathrm{Ind}_{q}(\Gamma^f_{\!\!p,n_{k}})$\, is a well-defined non-vanishing integer number for
\,$k>>0$.
\end{itemize}
Since the class of \,$f$\, belongs to \,$Z(G/L)$\, and \,$p\in \mathrm{Fix}(L)$\, we have
\,$h\big(f^j(p)\big)=f^j\big(h(p)\big)$ for any $j \in \mathbb{Z}$.
In particular we obtain
\,$f^{n_k}\big(h(p)\big)\rightarrow h(p)$\, and \,$h(p)\notin \text{Fix}(f)$.
Since \,$h$\, is orientation-preserving
the winding number \,$\text{Ind}_{h(q)}\big(h(\Gamma^f_{\!\!p,n_k})\big)$\, is well-defined,
equal to \,$\text{Ind}_q(\Gamma^f_{\!\!p,n_k})$\, and then non-vanishing for \,$k>>0$.

The curves \,$h(\Gamma^f_{\!\!p,n_k})$\, and \,$\Gamma^f_{\!\!h(p),n_k}$\, are homotopic relative to vertices
via an homotopy in \,$\RR - \mathrm{Fix}(f)$\, by Lemma \ref{homot}.
Thus we obtain
$$\text{Ind}_{h(q)}(\Gamma^f_{\!\!h(p),n_k}) = \text{Ind}_{h(q)}\big(h(\Gamma^f_{\!\!p,n_k})\big) \neq 0
\quad \mathrm{for} \quad k >>0 \,.$$
We deduce that
\,$h(q)$\, is a capital point for \,${\mathcal O}_{h(p)}(f)$.
Hence \,$h^{\ell}(q)$\, is a capital point for \,${\mathcal O}_{h^{\ell}(p)}(f)$\, for any \,$l \geq 0$\,
by successive applications of the previous argument.
We just used that \,$h$\, belongs to \,${\mathcal U}$.
Since \,$h^{-1} \in {\mathcal U}$\, whenever \,$h \in {\mathcal V}_{1}$\, we obtain  that
\,$h^{\ell}(q)$\, is a capital point for \,${\mathcal O}_{h^{\ell}(p)}(f)$\, for any
\,$l \in {\mathbb Z}$.
\end{proof}

%%%%%%%%%%%%%%%%%%%%%%%%%%%%%%%%%%%%%%%%%%%%%%%%%%%%%%%%%%%%%%%
%%%%%%%%%%%%%%%%%%%%%%%%%%%%%%%%%%%%%%%%%%%%%%%%%%%%%%%%%%%%%%%

\vskip10pt
\begin{lem} \label{fechopontocap}
Let \,$G \subset \mathrm{Homeo}({\mathbb R}^{2})$\, be a group.
Let \,$L$\, be a normal subgroup of \,$G$.
Let \,$h_{1},\hdots,h_{n} \in G \cap {\mathcal V}_{1}$\,
and \,$f \in G \cap {\mathcal U}$\, such that
the class of \,$f$\, in \,$G/L$\, belongs to \,$Z(G/L)$.
Let \,$q \in \mathrm{Fix}(L,f)$\, be a capital point for \,${\mathcal O}_{p}(f)$\,,
where \,$p \in \mathrm{Fix}(L) - \mathrm{Fix}(f)$\, has bounded
\,$(h_{1},\hdots,h_{n},f)$-orbit. If
\[ q' \in \overline{{\mathcal O}_{q}(h_{1},\hdots,h_{n})} \ \ \text{and} \ \
q' \not \in \overline{{\mathcal O}_{p}(h_{1},\hdots,h_{n},f)} \]
then \,$q'$\, is a capital point for \,${\mathcal O}_{z}(f)$\, for some
\,$z \in {\mathcal O}_{p}(h_{1},\hdots,h_{n})$.
\end{lem}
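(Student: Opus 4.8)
The plan is to show $q'$ is capital by finding an appropriate point $z$ in the $(h_1,\ldots,h_n)$-orbit of $p$ and transferring the capital-point property to it via Lemma \ref{ind:pt:02}, then passing to the limit using a closedness argument. Let me think about how the hypotheses combine.

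Let me reason about the setup. Since $q\in\mathrm{Fix}(L,f)$ is capital for $\mathcal{O}_p(f)$ and each $h_i\in\mathcal V_1\cap G$, Lemma \ref{ind:pt:02} tells me that for any word $w$ in $h_1,\ldots,h_n$ (and their inverses), the point $w(q)$ is a capital point for $\mathcal O_{w(p)}(f)$, and $w(q)\in\mathrm{Fix}(L,f)$. So the capital-point property is propagated along the $(h_1,\ldots,h_n)$-orbit of the pair $(p,q)$. Thus for every $z\in\mathcal O_p(h_1,\ldots,h_n)$, the corresponding image $w(q)$ (where $z=w(p)$) is capital for $\mathcal O_z(f)$. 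The difficulty is that $q'$ lies in the \emph{closure} of $\mathcal O_q(h_1,\ldots,h_n)$, so I must produce $q'$ as a limit of such images $w_k(q)$ and identify the limiting orbit.

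First I would write $q'=\lim_k w_k(q)$ for a sequence of words $w_k$, setting $z_k:=w_k(p)\in\mathcal O_p(h_1,\ldots,h_n)$ and $q_k:=w_k(q)$, so each $q_k$ is capital for $\mathcal O_{z_k}(f)$. Since the $(h_1,\ldots,h_n,f)$-orbit of $p$ is bounded, the points $z_k$ lie in a bounded set, so after passing to a subsequence I may assume $z_k\to z$ for some $z\in\overline{\mathcal O_p(h_1,\ldots,h_n)}$. The hypothesis $q'\notin\overline{\mathcal O_p(h_1,\ldots,h_n,f)}$ is what keeps $q'$ (a fixed point of $f$) away from the orbit closure, ensuring the winding-number count stays non-degenerate in the limit; I expect this to force $z\in\mathcal O_p(h_1,\ldots,h_n)$ itself (not merely its closure), since otherwise $z$ and its $f$-iterates would accumulate near $q'$. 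Concretely, I would argue that if $z$ were a genuine accumulation point, then the vertices $f^j(z_k)$ of the curves $\Gamma^f_{z_k,n}$ would cluster near $q'$, which combined with Lemma \ref{lem:iso} and the separation hypothesis yields a contradiction, pinning $z$ down to an actual orbit point.

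The heart of the matter is a continuity-of-winding-number argument: I must show that the non-vanishing of $\mathrm{Ind}_{q_k}(\Gamma^f_{z_k,n_{k}})$ survives the double limit (in the approximation index $k$ and in the defining sequence $(n_k)$). For this I would fix, for the limit point $z$ and its capital witness $q''$ (the transferred image of $q$ at $z$, guaranteed capital by Lemma \ref{ind:pt:02}), an increasing sequence $(m_j)$ with $f^{m_j}(z)\to z$ and $\mathrm{Ind}_{q''}(\Gamma^f_{z,m_j})\neq 0$; the curves $\Gamma^f_{z_k,m_j}$ converge uniformly to $\Gamma^f_{z,m_j}$ as $k\to\infty$ because $f$ is continuous and the vertices $f^i(z_k)=w_k(f^i(p))$ converge to $f^i(z)$. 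Since winding number is a homotopy invariant and $q'$ stays at positive distance from the limiting curve (by the separation hypothesis together with Corollary \ref{co:dpi} and Lemma \ref{lem:iso}, which keep fixed points off the relevant balls), I can conclude $\mathrm{Ind}_{q'}(\Gamma^f_{z,m_j})=\mathrm{Ind}_{q''}(\Gamma^f_{z,m_j})\neq 0$, exhibiting $q'$ as capital for $\mathcal O_z(f)$ with $z\in\mathcal O_p(h_1,\ldots,h_n)$.

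The main obstacle I anticipate is justifying the interchange of limits rigorously: the sequence $(n_k)$ witnessing that $q_k$ is capital for $\mathcal O_{z_k}(f)$ may depend on $k$, and the return times $f^{n_k}(z_k)\to z_k$ need not be compatible as $z_k\to z$. Handling this cleanly requires either a diagonal extraction or, preferably, exploiting that $q'\notin\overline{\mathcal O_p(h_1,\ldots,h_n,f)}$ to obtain a \emph{uniform} lower bound on the distance from $q'$ to all the curves $\Gamma^f_{z_k}$, so that a single homotopy-invariance estimate controls every winding number at once. Establishing that uniform separation, and then invoking the closedness machinery underlying Lemmas \ref{ind:pt:02} and \ref{fechopontocap}, is where the real work lies.
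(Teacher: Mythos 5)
There is a genuine gap, and it sits exactly where you predicted the ``real work'' would lie. Your plan hinges on passing to a limit $z_k=w_k(p)\to z$ and then asserting that the separation hypothesis $q'\notin\overline{{\mathcal O}_{p}(h_{1},\hdots,h_{n},f)}$ forces $z$ to be an \emph{actual} orbit point of $p$. That implication does not hold: the hypothesis keeps $q'$ (not $z$) away from the orbit closure, and nothing prevents $z_k$ from converging to a genuine accumulation point of ${\mathcal O}_{p}(h_{1},\hdots,h_{n})$. Without $z$ being of the form $w(p)$ for an honest word $w$, your ``transferred witness $q''$ at $z$'' is not defined --- Lemma \ref{ind:pt:02} only propagates the capital-point property along group elements, not to limit points of the base orbit. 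The subsequent double-limit/diagonal-extraction machinery is then built on an object that does not exist, so the argument as written cannot be completed.

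The paper's proof shows that no limit in the base point is needed at all, which dissolves the entire difficulty. Set $K=\overline{{\mathcal O}_{p}(h_{1},\hdots,h_{n},f)}$; since $q'\notin K$ and $q'\in\mathrm{Fix}(L,f)$ (by Remark \ref{inv:fix} and closedness of $\mathrm{Fix}(L,f)$), Lemma \ref{lem:iso} gives $\delta>0$ with $B(q',\delta)\cap[\,y\,,f(y)\,]=\emptyset$ for every $y\in K$ --- this is precisely the uniform separation you were reaching for. Now fix a \emph{single} word $\varphi_{\ell}$ in $h_{1},\hdots,h_{n}$ with $\|q'-\varphi_{\ell}(q)\|<\delta/2$. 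By Lemma \ref{ind:pt:02}, $\varphi_{\ell}(q)$ is capital for ${\mathcal O}_{\varphi_{\ell}(p)}(f)$, with an associated sequence $(n_{k})$. Every segment of $\Gamma^{f}_{\!\!\varphi_{\ell}(p),n_{k}}$ except the closing one is of the form $[\,y\,,f(y)\,]$ with $y\in K$, and the closing segment converges to one of that form, so the whole curve avoids $B(q',\delta/2)$ for $k\gg 0$. Hence $q'$ and $\varphi_{\ell}(q)$ lie in the same connected component of the complement and have equal (non-zero) winding numbers, so $q'$ is capital for ${\mathcal O}_{z}(f)$ with $z=\varphi_{\ell}(p)\in{\mathcal O}_{p}(h_{1},\hdots,h_{n})$. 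Your ingredients (Lemmas \ref{ind:pt:02} and \ref{lem:iso}) are the right ones; the missing idea is that the witness $z$ should be taken as one sufficiently good orbit point rather than a limit.
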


%%%%%%%%%%%%%%%%%%%%%%%%%%%%%%%%%%%%%%%%%%%%%%%%%%%%%%%%%%%%%%%
%%%%%%%%%%%%%%%%%%%%%%%%%%%%%%%%%%%%%%%%%%%%%%%%%%%%%%%%%%%%%%%

\begin{proof}
We denote \,$K=\overline{{\mathcal O}_{p}(h_{1},\hdots,h_{n},f)}$.
Lemma \ref{lem:iso} and Remark \ref{inv:fix} imply the existence of
\,$\delta >0$\, such that \,$B(q',\delta) \cap [\,y\,,f(y)\,]=\emptyset$\,
for any \,$y \in K$. On the other hand
there exists a sequence \,$(\varphi_{\ell})_{\ell \geq 1}$\, of elements of the
group \,$\langle h_{1},\hdots,h_{n} \rangle$\, such that \,$\varphi_{\ell}(q)\to q'$.
We fix \,$\ell \in {\mathbb N}$\, such that \,$\|q' - \varphi_{\ell}(q)\| < \delta /2$.
Since \,$q$\, is a capital point for \,${\mathcal O}_{p}(f)$\, we obtain that
\,$\varphi_{\ell}(q)$\, is a capital point for \,${\mathcal O}_{\varphi_{\ell}(p)}(f)$\,
by Lemma \ref{ind:pt:02}.
Let \,$(n_{k})_{k \geq 1}$\, be an increasing sequence such that
\,$f^{n_{k}}(\varphi_{\ell}(p)) \to \varphi_{\ell}(p)$\, and
\,$\mathrm{Ind}_{\varphi_{\ell}(q)}(\Gamma^f_{\!\!\varphi_{\ell}(p),n_{k}})$\, is well-defined
and non-vanishing for any \,$k \in {\mathbb N}$.
The curve \,$\Gamma^f_{\!\!\varphi_{\ell}(p),n_{k}}$\, does not intersect \,$B(q',\delta/2)$\, for \,$k>>0$\,
by construction.
As a consequence \,$q'$\, is a capital point for \,${\mathcal O}_{\varphi_{\ell}(p)}(f)$.
\end{proof}

\vskip10pt

Consider homeomorphisms \,$f\,,g$\, that are embedded in  topological flows,
i.e. \,$f = \mathrm{exp}(X)$\, and \,$g= \mathrm{exp}(Y)$.
Suppose \,$p \in \mathrm{Sing}(Y) - \mathrm{Sing}(X)$\,
and \,$q \in \mathrm{Sing}(X) - \mathrm{Sing}(Y)$.
It is obvious that if the flows commute then the trajectory of \,$X$\, through \,$p$\,
is contained in \,$\mathrm{Sing}(Y) - \mathrm{Sing}(X)$\,
and the trajectory of \,$Y$\, through \,$q$\,
is contained in \,$\mathrm{Sing}(X) - \mathrm{Sing}(Y)$.
In particular the trajectories do not intersect. The next lemma
is the generalization of this flow behavior in the $\epsilon$-Lipschitz
with respect to the identity setting.

%%%%%%%%%%%%%%%%%%%%%%%%%%%%%%%%%%%%%%%%%%%%%%%%%%%%%%%%%%%%%%%
%%%%%%%%%%%%%%%%%%%%%%%%%%%%%%%%%%%%%%%%%%%%%%%%%%%%%%%%%%%%%%%

\vskip10pt
\begin{lem}\label{k:gr:kapa}
Let \,$G \subset \mathrm{Homeo}(\mathbb{R}^2)$\, be a subgroup and  \,$f,g\in \mathcal{U}\cap G$.
Suppose that
$$p \in \mathrm{Fix}(G_{(1)}, g)-\mathrm{Fix}(f) \quad \mathrm{and} \quad
q \in \mathrm{Fix}(G_{(1)}, f)-\mathrm{Fix}(g)\,.$$
Then the following properties are satisfied$\,:$
\begin{itemize}
\item[$(i)$]
The curves \,$\Gamma^{f}_{\!\!p}$\, and \,$\Gamma^{g}_{\!\!q}$\, are disjoint$\,;$

\item[$(ii)$]
If there exists a constant \,$r>0$\, such that
$$\|f^{i+1}(p)-f^i(p)\|\geq r \quad \mathrm{and} \quad \|g^{i+1}(q)-g^{i}(q)\|\geq r$$
for any \,$i\in\Z$\,, then \,$d(\Gamma^f_{\!\!p},\Gamma^g_{\!\!q})\geq r$.

\item[$(iii)$]
If there is an increasing sequence \,$(n_k)_{k\geq 1}$\, of positive integers such that
\,$f^{n_k}(p) \rightarrow p$\, then there exists \,$\kappa\in\Z^{+}$\, such that
\,$\Gamma^f_{\!\!p,n_k} \cap \Gamma^g_{\!\!q} =\emptyset$\, for all \,$k\geq \kappa$\,
and \,$\Gamma^g_{\!\!q}$\,.
\end{itemize}

\end{lem}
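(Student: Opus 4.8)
The plan is to transport the commuting-flow picture recalled just before the statement into the discrete Lipschitz setting, and to deduce all three items from a single separation estimate.

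\textbf{Step 1 (the orbits land in the fixed sets).} I would first prove that $\mathcal{O}_p(f)\subset\mathrm{Fix}(g)$ and $\mathcal{O}_q(g)\subset\mathrm{Fix}(f)$. For every $j\in\Z$ the element $[f^{j},g]=f^{j}gf^{-j}g^{-1}$ is a commutator of two elements of $G$, hence lies in $G_{(1)}$; since $p\in\mathrm{Fix}(G_{(1)})$ it fixes $p$, and because $g(p)=p$ this collapses to $g(f^{j}(p))=f^{j}(p)$. The symmetric computation with $q\in\mathrm{Fix}(G_{(1)},f)$ gives $f(g^{i}(q))=g^{i}(q)$. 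Thus every vertex of $\Gamma^{f}_{\!\!p}$ is fixed by $g$ and every vertex of $\Gamma^{g}_{\!\!q}$ is fixed by $f$. I also record that $s_{j}:=\|f^{j+1}(p)-f^{j}(p)\|>0$ and $t_{i}:=\|g^{i+1}(q)-g^{i}(q)\|>0$ for all $i,j$, since the $f$-orbit of $p\notin\mathrm{Fix}(f)$ and the $g$-orbit of $q\notin\mathrm{Fix}(g)$ avoid the respective fixed sets.

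\textbf{Step 2 (one separation estimate).} The crux is that although only the vertices lie in the fixed sets, I can still separate the curves by comparing the displacements of $f$ and of $g$ along them. Fix $x\in[f^{j}(p),f^{j+1}(p)]\subset\Gamma^{f}_{\!\!p}$ and $y\in[g^{i}(q),g^{i+1}(q)]\subset\Gamma^{g}_{\!\!q}$, and set $\eta=\|x-y\|$. Using $\mathrm{Lip}(f-Id)\le 1/8$ together with $\|x-f^{j}(p)\|\le s_{j}$ one gets $\|f(x)-x\|\ge\frac{7}{8} s_{j}$, and since $f^{j}(p)\in\mathrm{Fix}(g)$ one gets $\|g(x)-x\|\le\frac{1}{8} s_{j}$; symmetrically $\|g(y)-y\|\ge\frac{7}{8} t_{i}$ and $\|f(y)-y\|\le\frac{1}{8} t_{i}$. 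Comparing the two displacements at $x$ and at $y$ through $\mathrm{Lip}(f-Id),\mathrm{Lip}(g-Id)\le 1/8$ yields $7s_{j}\le t_{i}+\eta$ and $7t_{i}\le s_{j}+\eta$; adding these gives the uniform bound
\[ \|x-y\|\ \ge\ 3\,(s_{j}+t_{i})\ >\ 0 . \]
This immediately proves $(i)$, because the right-hand side is positive, and it proves $(ii)$, because under the extra hypothesis $s_{j},t_{i}\ge r$ it reads $\|x-y\|\ge 6r\ge r$.

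\textbf{Step 3 (the closing segment for $(iii)$).} The closed curve $\Gamma^{f}_{\!\!p,n_{k}}$ differs from $\Gamma^{f}_{\!\!p}$ only by the closing segment $[f^{n_{k}}(p),f(p)]$; its remaining edges $[f^{j}(p),f^{j+1}(p)]$ with $1\le j\le n_{k}-1$ already belong to $\Gamma^{f}_{\!\!p}$ and hence are disjoint from $\Gamma^{g}_{\!\!q}$ by $(i)$. Applying the estimate of Step 2 to the edge $[p,f(p)]$ (the case $j=0$) gives $\|x-y\|\ge 3s_{0}=3\|f(p)-p\|$ for every $x\in[p,f(p)]$ and every $y\in\Gamma^{g}_{\!\!q}$, so $d([p,f(p)],\Gamma^{g}_{\!\!q})\ge 3\|f(p)-p\|=:\rho>0$. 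Since $f^{n_{k}}(p)\to p$, I choose $\kappa$ with $\|f^{n_{k}}(p)-p\|<\rho$ for $k\ge\kappa$; then each point of $[f^{n_{k}}(p),f(p)]$ lies within $\|f^{n_{k}}(p)-p\|$ of the corresponding point of $[p,f(p)]$, so the closing segment stays at distance at least $\rho-\|f^{n_{k}}(p)-p\|>0$ from $\Gamma^{g}_{\!\!q}$. Hence $\Gamma^{f}_{\!\!p,n_{k}}\cap\Gamma^{g}_{\!\!q}=\emptyset$ for $k\ge\kappa$.

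The main obstacle is Step 2: the hypotheses only place the vertices of each curve in a fixed-point set, so the separation cannot come from an inclusion and must be squeezed out of a two-sided control of both displacements. This is exactly where the global Lipschitz bound $1/8$ (i.e. $f,g\in\mathcal{U}$) is essential and where Corollary \ref{co:dpi} is implicitly at work. A secondary subtlety, dealt with above, is that $\Gamma^{g}_{\!\!q}$ need not be closed, so in $(iii)$ I extract a uniform positive distance to the compact segment $[p,f(p)]$ rather than invoke closedness.
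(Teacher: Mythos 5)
Your proof is correct. Step 1 is exactly the paper's starting point (it is Remark \ref{inv:fix} in action: $[f^{j},g]\in G_{(1)}$ fixes $p$, so the vertices of each curve lie in the fixed set of the other map). Where you diverge is in the separation mechanism. The paper does not prove $(i)$ and $(ii)$ directly — it refers to Lemmas 4.3 and 4.5 of Druck--Fang--Firmo — and proves $(iii)$ by contradiction: an intersection point of the closing segment $[f^{n_k}(p),f(p)]$ with some $[g^{j}(q),g^{j+1}(q)]$ forces, via the triangle inequality, either $f^{n_k}(p)\in B[g^{j}(q),2\|g^{j+1}(q)-g^{j}(q)\|]$ or $g^{j}(q)\in B[p,4\|f(p)-p\|]$ for $k$ large, both of which are excluded by Corollary \ref{co:dpi} because these points are fixed by the wrong map. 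Your Step 2 replaces this case analysis by a single two-sided squeeze: the estimates $\|f(x)-x\|\ge\tfrac{7}{8}s_{j}$, $\|g(x)-x\|\le\tfrac{1}{8}s_{j}$ (and their mirrors at $y$), combined with $\mathrm{Lip}(f-Id),\mathrm{Lip}(g-Id)\le\tfrac18$ applied to the pair $(x,y)$, give $7s_{j}\le t_{i}+\eta$ and $7t_{i}\le s_{j}+\eta$, hence $\|x-y\|\ge 3(s_{j}+t_{i})$; I verified these inequalities and they are all correct. This buys you a uniform quantitative lower bound valid for \emph{arbitrary} points of the two curves (not just vertices), from which $(i)$, $(ii)$ and, after the Hausdorff-distance comparison of $[f^{n_k}(p),f(p)]$ with $[p,f(p)]$, also $(iii)$ all follow from one computation — a more self-contained and arguably cleaner route than the paper's, at the cost of no new generality. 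The handling of the closing segment in Step 3 is also sound, since the affine parametrization bounds its distance to $[p,f(p)]$ by $\|f^{n_k}(p)-p\|$.
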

\vskip5pt

%%%%%%%%%%%%%%%%%%%%%%%%%%%%%%%%%%%%%%%%%%%%%%%%%%%%%%%%%%%%%%%
%%%%%%%%%%%%%%%%%%%%%%%%%%%%%%%%%%%%%%%%%%%%%%%%%%%%%%%%%%%%%%%

\begin{proof}
We will proof only item $(iii)$ since the proofs of items $(i)$ and $(ii)$
use analogous arguments.
Moreover,
the items $(i)$ and $(ii)$ are versions of Lemmas 4.3 and 4.5 of \cite{dff02} respectively.
The proofs are essentially the same, using Remark \ref{inv:fix} and item $(iii)$
of Corollary \ref{co:dpi}.

Suppose that item $(iii)$ does not hold true. Up to consider a subsequence we
can suppose that  \,$\Gamma^f_{\!\!p,n_k} \cap \Gamma^g_{\!\!q}  \neq \emptyset$\,
for any \,$k \in {\mathbb N}$. Since
\,$\Gamma^f_{\!\!p} \cap \Gamma^g_{\!\!q} = \emptyset$\, by item $(i)$ we obtain
$$[\,f^{n_k}(p)\,, f(p)\,] \cap \Gamma^g_{\!\!q} \neq \emptyset \quad  \textrm{for any} \quad
k \in {\mathbb N}\,.$$
Let \,$j \in {\mathbb Z}$\, such that
\,$[\,f^{n_k}(p)\,, f(p)\,] \cap [\,g^j(q)\,,g^{j+1}(q)\,] \neq \emptyset .$
Of course \,$j$\, depends on \,$k$.
We obtain
$$\|g^j(q)-f^{n_k}(p)\| \leq \|g^{j+1}(q)-g^{j}(q)\|+\|f^{n_k}(p)-f(p)\|\leq$$
$$\leq 2 \max\{\|g^{j+1}(q)-g^j(q)\|\,,\|f^{n_k}(p)-f(p)\|\}.$$
There exist two cases:
\begin{itemize}
\item
$f^{n_k}(p)$\, belongs to the closed ball of center \,$g^j(q)$\,
and radius \,$2\|g^{j+1}(q)-g^j(q)\|$.
This is impossible since \,$f^{n_k}(p)$\, is a fixed point of \,$g$\, by Remark \ref{inv:fix}
and the ball does not contain fixed points of \,$g$\, by Corollary \ref{co:dpi}\,;

\item
$g^j(q)$\, belongs to the closed ball of center \,$f^{n_k}(p)$\,
and radius \,$2\|f^{n_k}(p)-f(p)\|$. Since \,$f^{n_{k}}(p) \to p$\,
the point \,$g^{j}(q)$\, is a fixed point of \,$f$\, (Remark \ref{inv:fix})
that belongs to the closed ball of center \,$p$\, and radius
\,$4 \|p-f(p)\|$\, for \,$k >>0$. This contradicts Corollary \ref{co:dpi}.
\end{itemize}
We obtain a contradiction in both cases.
\end{proof}

%%%%%%%%%%%%%%%%%%%%%%%%%%%%%%%%%%%%%%%%%%%%%%%%%%%%%%%%%%%%%%%
%%%%%%%%%%%%%%%%%%%%%%%%%%%%%%%%%%%%%%%%%%%%%%%%%%%%%%%%%%%%%%%

\vskip30pt
\section{Proof of  Theorem \ref{teorema}}
\vskip10pt

Our first goal is making explicit that a finitely generated nilpotent
group is a tower of cyclic central extensions.
 Let ${\mathcal S}$ be a subset of a group $G$. We define
\[ {\mathcal S}_{(0)}:= {\mathcal S} \quad \mathrm{and} \quad
{\mathcal S}_{(j+1)}:= \{ [\,a\,,b\,] \ ; \ a \in {\mathcal S} \ \mathrm{and} \
b \in  {\mathcal S}_{(j)} \} \quad \mathrm{for} \quad j \geq 0 \,.\]

Suppose that \,${\mathcal S}$\, generates a nilpotent group \,$G$.
The next result implies that we can find a generator set of
any subgroup in the descending central series by
considering iterated commutators of elements in \,${\mathcal S}$.
It is a generalization of Proposition 2.3 of \cite{dff02}.

\begin{lem}
\label{lem:censer}
Let \,$G$\, be a $\sigma$-step nilpotent group generated by a subset \,${\mathcal S}$\, of \,$G$.
Then we obtain \,$G_{(j)}=\langle {\mathcal S}_{(j)}, \hdots, {\mathcal S}_{(\sigma -1)} \rangle$\,
for all \,$\sigma \geq 1$\, and \,$j \geq 0$.
\end{lem}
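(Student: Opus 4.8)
The plan is to reduce the statement to a single \emph{one-step} identity and then to telescope it using the nilpotency hypothesis. Write \,$H_{j}:=\langle {\mathcal S}_{(j)},\hdots,{\mathcal S}_{(\sigma-1)}\rangle$. First I would record the trivial inclusion \,${\mathcal S}_{(i)} \subseteq G_{(i)}$: an element of \,${\mathcal S}_{(i)}$\, is a left-normed commutator of weight \,$i+1$\, in elements of \,${\mathcal S}\subseteq G$, so peeling off the brackets one at a time and using \,$[G,G_{(k)}]=G_{(k+1)}$\, places it in \,$G_{(i)}$. Since the lower central series is descending this already gives \,$H_{j} \subseteq G_{(j)}$. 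The real content is the reverse inclusion, and for it I would isolate the key claim
\[ G_{(j)} = \langle {\mathcal S}_{(j)} \rangle \cdot G_{(j+1)} \qquad \text{for all } j \geq 0, \]
where the right-hand side is a genuine subgroup because \,$G_{(j+1)}$\, is normal in \,$G$.

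I would prove this claim by induction on \,$j$, the base \,$j=0$\, being immediate since \,$G_{(0)}=G=\langle {\mathcal S}\rangle$\, already contains \,$G_{(1)}$. For the inductive step the main tool is the bilinearity of the commutator map modulo \,$G_{(j+1)}$. Using the standard identities \,$[ab,c] = {}^{a}[b,c]\,[a,c]$\, and \,$[a,bc]=[a,b]\,{}^{b}[a,c]$\, (with \,${}^{g}x=gxg^{-1}$), together with the fact that \,$[g,x]\in G_{(j)}$\, whenever \,$x\in G_{(j-1)}$, one checks that each conjugation factor costs only an element of \,$[G,G_{(j)}]=G_{(j+1)}$; as \,$G_{(j)}/G_{(j+1)}$\, is abelian this yields
\[ [g_1 g_2, x] \equiv [g_1,x]\,[g_2,x], \qquad [g, x_1 x_2] \equiv [g,x_1]\,[g,x_2] \pmod{G_{(j+1)}} \]
for \,$g,g_i\in G$\, and \,$x,x_i\in G_{(j-1)}$\, (and correspondingly \,$[g^{-1},x]\equiv[g,x]^{-1}$, etc.). Now \,$G_{(j)}$\, is generated by the commutators \,$[g,x]$\, with \,$g\in G$\, and \,$x\in G_{(j-1)}$; writing \,$g$\, literally as a word in \,${\mathcal S}^{\pm 1}$\, and, by the inductive hypothesis at \,$j-1$, writing \,$x$\, as a word in \,${\mathcal S}_{(j-1)}^{\pm 1}$\, times an element of \,$G_{(j)}$\, (which contributes nothing modulo \,$G_{(j+1)}$), the bilinearity collapses \,$[g,x]$\, modulo \,$G_{(j+1)}$\, into a product of factors \,$[s,c]^{\pm 1}$\, with \,$s\in{\mathcal S}$\, and \,$c\in{\mathcal S}_{(j-1)}$. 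Each such \,$[s,c]$\, is by definition an element of \,${\mathcal S}_{(j)}$, so \,$[g,x]\in \langle {\mathcal S}_{(j)}\rangle\cdot G_{(j+1)}$, which proves the claim.

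With the claim in hand I would conclude by a downward induction on \,$j$. The base case is \,$j\geq \sigma$, where both \,$G_{(j)}$\, and the empty-generated \,$H_{j}$\, are trivial by the nilpotency hypothesis \,$G_{(\sigma)}=\{Id\}$. Assuming \,$G_{(j+1)}=H_{j+1}$, the claim gives \,$G_{(j)}=\langle {\mathcal S}_{(j)}\rangle\cdot G_{(j+1)}$, and since \,$G_{(j+1)}$\, is normal this product subgroup equals \,$\langle {\mathcal S}_{(j)}\cup G_{(j+1)}\rangle=H_{j}$, completing the induction. The main obstacle is the inductive step of the claim: one must carry out the commutator bookkeeping carefully so that the conjugation corrections genuinely fall into \,$G_{(j+1)}$\, (this is precisely where \,$x\in G_{(j-1)}$, hence \,$[g,x]\in G_{(j)}$, is used) and so that the freely appearing inverses are absorbed into the subgroup \,$\langle {\mathcal S}_{(j)}\rangle$\, rather than forcing \,${\mathcal S}$\, to be closed under inversion.
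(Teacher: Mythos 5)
Your argument is correct, but it follows a genuinely different route from the one in the paper. The paper proves the lemma by induction on the nilpotency class $\sigma$: it passes to the quotient $G/G_{(\sigma)}$, which is $\sigma$-step nilpotent, applies the induction hypothesis there to get $G_{(j)}=\langle {\mathcal S}_{(j)},\hdots,{\mathcal S}_{(\sigma-1)},G_{(\sigma)}\rangle$, and then eliminates $G_{(\sigma)}$ by invoking Lemma~2.2 of \cite{dff02}, which asserts that the last non-trivial term of the lower central series satisfies $G_{(\sigma-1)}=\langle {\mathcal S}_{(\sigma-1)}\rangle$; that external lemma also supplies the base case $\sigma\leq 2$. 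You instead isolate the one-step identity $G_{(j)}=\langle {\mathcal S}_{(j)}\rangle\cdot G_{(j+1)}$, prove it by an upward induction on $j$ via the commutator identities $[ab,c]={}^{a}[b,c]\,[a,c]$ and $[a,bc]=[a,b]\,{}^{b}[a,c]$ (the conjugation corrections landing in $G_{(j+1)}$ precisely because the entries lie in $G_{(j-1)}$), and then telescope by a downward induction starting from $G_{(\sigma)}=\{Id\}$. Your one-step identity is exactly the content of the cited Lemma~2.2 of \cite{dff02} in the case $j=\sigma-1$, so in effect you have re-proved that ingredient from scratch rather than quoting it; the payoff is a self-contained argument (and a claim, $G_{(j)}=\langle{\mathcal S}_{(j)}\rangle G_{(j+1)}$, valid without any nilpotency hypothesis), while the paper's quotient argument is shorter because it outsources the commutator bookkeeping. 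The only blemishes are cosmetic: the elements of ${\mathcal S}_{(i)}$ are right-normed (nested to the right) rather than left-normed commutators, though your peeling argument for ${\mathcal S}_{(i)}\subseteq G_{(i)}$ is unaffected; and the closing identification $\langle{\mathcal S}_{(j)}\rangle\cdot H_{j+1}=\langle{\mathcal S}_{(j)}\cup H_{j+1}\rangle$ deserves the explicit remark that $H_{j+1}=G_{(j+1)}$ is normal in $G$, which you do make earlier.
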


\begin{proof}
Any $\sigma$-step nilpotent group generated by \,${\mathcal S}$\, satisfies
\,$G_{(\sigma -1)} =\langle {\mathcal S}_{(\sigma -1)} \rangle$\, by Lemma 2.2 of \cite{dff02}.
This result implies the statement of the lemma for \,$\sigma \leq 2$.

Let us show that if the lemma holds true for \,$\sigma$\, then so it does for
\,$\sigma +1$. The series
\[ \frac{G}{G_{(\sigma)}} \varsupsetneq \frac{G_{(1)}}{G_{(\sigma)}} \varsupsetneq
\cdots \varsupsetneq \frac{G_{(\sigma -1)}}{G_{(\sigma)}} \varsupsetneq
\frac{G_{(\sigma)}}{G_{(\sigma)}} = \{ \overline{Id} \} \]
is the lower central series of the $\sigma$-step nilpotent group
\,$G/G_{(\sigma)}$\,
since \,$(G/G_{(\sigma)})_{(j)} = G_{(j)} /G_{(\sigma)}$\, for any \,$0 \leq j \leq \sigma$.
We obtain
\[ \frac{G_{(j)}}{G_{(\sigma)}} = \langle \overline{\mathcal S}_{(j)}, \hdots, \overline{\mathcal S}_{(\sigma -1)} \rangle \]
by induction hypothesis where \,$\overline{Id}$\, and \,$\overline{\mathcal S}_{(k)}$\, are the projections of \,${Id}$\, and  \,${\mathcal S}_{(k)}$\,  in \,$G/G_{(\sigma)}$\,
respectively.
Therefore \,$G_{(j)}$\, is equal to
\,$\langle {\mathcal S}_{(j)}, \hdots, {\mathcal S}_{(\sigma -1)},G_{(\sigma)} \rangle$\, for any
\,$j \geq 0$.
We obtain
\,$G_{(j)} =\langle {\mathcal S}_{(j)}, \hdots, {\mathcal S}_{(\sigma -1)},{\mathcal S}_{(\sigma)} \rangle$\, for any \,$j \geq 0$\,
by Lemma 2.2 of \cite{dff02}.
\end{proof}

\vskip10pt

\begin{remark}
\label{rem:loss}
Let \,$\sigma \geq 1$.
A direct consequence of the previous lemma and item $(iv)$
of Lemma \ref{comutpropr}  is that if a
\,$(\sigma +1)$-step nilpotent subgroup \,$G$\, of \,$\mathrm{Homeo}({\mathbb R}^{2})$\, is generated by a subset
of \,${\mathcal V}_{\sigma+1}$\, then \,$G_{(j)}$\, is generated by a subset of
\,${\mathcal V}_{\sigma}$\, for any \,$j \geq 0$.
\end{remark}

The next lemma is a version of the Main Lemma of \cite[p. 1085]{dff02}
for homeomorphisms of the plane that are $\epsilon$-Lipschitz with respect
to the identity. It is the key tool in order to find global fixed points.

\begin{lem}
\label{lem:fdertog}
Let \,$G \subset \mathrm{Homeo}({\mathbb R}^{2})$\, be a $\sigma$-step nilpotent group
finitely generated in \,${\mathcal V}_{\sigma}$.
Let \,$g_{0}\,,\ldots,g_{m}\,,f \in {\mathcal V}_{\sigma} \cap G$\,
and \,$p \in \mathrm{Fix} (G_{(1)}, g_{0},\ldots, g_{m}) - \mathrm{Fix}(f)$\,
where \,$m \geq 0$.
Suppose there exists an increasing sequence \,$(n_{k})_{k \geq 1}$\, of positive integers
such that \,$f^{n_{k}}(p) \to p$.
If \,$\gamma_{k}$\, is a simple closed curve contained in \,$\Gamma_{\!\!p,n_{k}}^{f}$\, then
there exists
\[ q_{k} \in \mathrm{Fix} (G_{(1)}, g_{0},\hdots, g_{m},f) \cap \mathrm{Int}({\mathcal D}_{k}) \]
for \,$k$\, big enough where \,${\mathcal D}_{k}$\, is the compact disc bounded by
\,$\gamma_{k}$.
\end{lem}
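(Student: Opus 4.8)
The plan is to split the argument into two parts: first produce a fixed point of \,$f$\, enclosed by \,$\gamma_{k}$\, by a winding-number computation in the spirit of Reifenberg and Brown, and then promote it to a common fixed point of the full list \,$G_{(1)}, g_{0},\ldots,g_{m}, f$\, by exploiting the invariance of the winding data under the subgroup. For the first part I would consider the continuous vector field \,$W(x):=f(x)-x$, whose zeros are exactly the fixed points of \,$f$. The key observation is that \,$W$\, does not vanish on \,$\gamma_{k}$: if \,$W(\lambda)=0$\, for some \,$\lambda$\, in an edge \,$[\,f^{i}(p)\,,f^{i+1}(p)\,]$\, then \,$\lambda\in\mathrm{Fix}(f)$, and taking \,$y=f^{i}(p)$\, in the displacement estimate of Corollary \ref{co:dpi} would force \,$\|f^{i}(p)-\lambda\|\geq 2\,\|f^{i}(p)-f^{i+1}(p)\|$, contradicting that \,$\lambda$\, lies on an edge of positive length. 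Hence \,$W$\, is a non-vanishing field along the simple closed curve \,$\gamma_{k}$\, and its winding number \,$\mathrm{Ind}(W,\gamma_{k})$\, is well defined.

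Next I would compute \,$\mathrm{Ind}(W,\gamma_{k})$. At each vertex \,$f^{i}(p)$\, one has \,$W(f^{i}(p))=f^{i+1}(p)-f^{i}(p)$, i.e. \,$W$\, points exactly along the outgoing edge. Along an edge \,$[\,f^{i}(p)\,,f^{i+1}(p)\,]$\, Lemma \ref{inj:los} confines \,$f(\lambda)$\, to the intersection of two cones of half-angle \,$2\arctan(\epsilon_{\sigma}/(1-\epsilon_{\sigma}))$\, based at \,$f^{i+1}(p)$\, and \,$f^{i+2}(p)$, which is legitimate because \,$\epsilon_{\sigma}\leq 1/8<1/(1+\sqrt{3})$. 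Since \,$\epsilon_{\sigma}$\, is small the angular variation of \,$W$\, over each edge is tightly controlled, so \,$W|_{\gamma_{k}}$\, is homotopic, through non-vanishing fields on \,$\gamma_{k}$, to the positively oriented tangent field of \,$\gamma_{k}$. By the theorem of turning tangents the tangent field of a simple closed curve has winding number \,$\pm 1$, whence \,$\mathrm{Ind}(W,\gamma_{k})=\pm 1\neq 0$. As \,$W$\, is continuous on the closed disc \,$\D_{k}$\, and has non-zero winding along its boundary \,$\gamma_{k}$, it must vanish at some interior point, and therefore
\[ \mathrm{Fix}(f)\cap\mathrm{Int}(\D_{k})\neq\emptyset . \]

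The remaining and hardest step is to arrange that the fixed point of \,$f$\, in \,$\mathrm{Int}(\D_{k})$\, can be chosen inside \,$\mathrm{Fix}(G_{(1)},g_{0},\ldots,g_{m})$. The difficulty is structural: although \,$p$\, is fixed by \,$H:=\langle G_{(1)},g_{0},\ldots,g_{m}\rangle$\, and \,$H$\, is normalised by \,$f$\, (because \,$G_{(1)}$\, is normal and \,$f g_{i} f^{-1}=[\,f\,,g_{i}\,]\,g_{i}\in H$\, since \,$[\,f\,,g_{i}\,]\in G_{(1)}$, so that \,$\mathrm{Fix}(H)$\, is \,$f$-invariant), the raw set \,$\mathrm{Fix}(f)\cap\D_{k}$\, is \emph{not} itself \,$H$-invariant, and so no fixed-point argument for \,$H$\, can be applied to it directly. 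To bypass this I would replace \,$\mathrm{Fix}(f)$\, by the set \,$\mathcal{K}$\, of capital points for \,$\mathcal{O}_{p}(f)$, which records the winding data and is therefore stable under the action: every generator of \,$H$\, lies in \,$\mathcal{V}_{\sigma}\subset\mathcal{V}_{1}$\, and fixes \,$p$, so Lemma \ref{ind:pt:02} shows that \,$\mathcal{K}$\, is \,$H$-invariant and Lemma \ref{fechopontocap} shows that it is closed, while \,$\mathcal{K}\subseteq\mathrm{Fix}(f)$. The proof would then be finished by locating a capital point inside \,$\D_{k}$\, and finding a common fixed point of \,$H$\, in the non-empty compact \,$H$-invariant set \,$\mathcal{K}\cap\D_{k}$, using Remark \ref{rem:loss} (so that \,$G_{(1)}$\, is \,$(\sigma-1)$-step nilpotent and generated in \,$\mathcal{V}_{\sigma-1}$) to feed an induction on the nilpotent data. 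I expect precisely this promotion — transferring the single-map Brouwer conclusion of the first two paragraphs into a genuine common fixed point while remaining inside the prescribed disc — to be the main obstacle, since the non-invariance of \,$\mathrm{Fix}(f)$\, forces one to upgrade to the invariant object \,$\mathcal{K}$\, and to reconcile the winding number computed along the simple subcurve \,$\gamma_{k}$\, with the winding along the full curve \,$\Gamma^{f}_{p,n_{k}}$\, that defines capital points.
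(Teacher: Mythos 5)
Your first part is sound and coincides with the paper's base case: the non-vanishing of \,$W(x)=f(x)-x$\, on \,$\gamma_{k}$\, via Corollary \ref{co:dpi} and the index computation along the edges is exactly Lemma \ref{lem:ind:2}, which the paper invokes for the case \,$\sigma=0$\, (i.e. \,$G=\langle f\rangle$, all \,$g_{i}=Id$). The genuine gap is the second part, which is where the entire content of the lemma lies, and your sketch of it is both incomplete and circular. You propose to locate a capital point in \,${\mathcal D}_{k}$\, and then find a fixed point of \,$H=\langle G_{(1)},g_{0},\ldots,g_{m}\rangle$\, in the set \,$\mathcal{K}$\, of capital points; but in the paper the existence of capital points lying in \,$\mathrm{Fix}(G_{(1)},g_{1},\ldots,g_{m},f)$\, is Lemma \ref{lem:index}, whose proof \emph{uses} Lemma \ref{lem:fdertog} — the capital-point machinery (Lemmas \ref{ind:pt:02}, \ref{fechopontocap}, \ref{lem:clcv}) sits downstream of this lemma, not upstream. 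Moreover \,$\mathcal{K}\cap{\mathcal D}_{k}$\, is not \,$H$-invariant (only \,$\mathcal{K}$\, is, since \,${\mathcal D}_{k}$\, is not preserved by \,$H$), and "finding a common fixed point of \,$H$\, in a compact invariant set" is itself a statement of the same strength as the theorem being proved; saying that Remark \ref{rem:loss} will "feed an induction" does not supply the inductive mechanism, and in particular gives no control keeping the eventual fixed point inside \,$\mathrm{Int}({\mathcal D}_{k})$.

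The paper's actual argument is a double induction (on the nilpotency class \,$\sigma$, then on \,$m$) whose inductive step is a contradiction by exhaustion of area: assuming \,$\mathrm{Fix}(A,f)\cap\mathrm{Int}({\mathcal D}_{k})=\emptyset$\, with \,$A=\langle G_{(1)},g_{0},\ldots,g_{m+1}\rangle$, one uses the induction hypothesis to produce a point \,$y_{0}\in\mathrm{Fix}(B)\cap\mathrm{Int}({\mathcal D}_{k})$\, with \,$B=\langle G_{(1)},g_{0},\ldots,g_{m},f\rangle$, shows via Lemma \ref{k:gr:kapa}$(iii)$ that \,$\Gamma^{g_{m+1}}_{\!\!y_{0}}$\, is trapped in \,$\mathrm{Int}({\mathcal D}_{k})$, extracts a simple closed subcurve bounding a disc \,$\Delta_{0}$, and then alternates between \,$f$\, and \,$g_{m+1}$\, to build an infinite nested sequence \,$\Delta_{0}\supset\Delta_{1}\supset\cdots$\, with \,$d(\partial\Delta_{j},\partial\Delta_{j+1})\geq r>0$\, uniformly (Lemma \ref{k:gr:kapa}$(ii)$), contradicting the finiteness of the area of \,$\Delta_{0}$. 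None of this — the alternation between the two maps, the separation estimate, and the area contradiction — appears in your proposal, so the key idea of the proof is missing.
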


The map \,$g_{0}$\, is always the identity map by convention.

\begin{proof}
The proof is by induction on \,$\sigma$.
It is convenient to consider that the case \,$\sigma=0$\, of
the induction process corresponds to the situation
\,$G=\langle f \rangle$\, and \,$g_{0} \equiv  \cdots \equiv g_{m} \equiv Id$.
In this way we avoid a special proof for the
case \,$\sigma=1$.
The case \,$\sigma=0$ is a consequence of Lemma \ref{lem:ind:2}.

Let us show that if the lemma holds true for any group of
nilpotent class \,$l$\, with \,$0 \leq l \leq \sigma$\, then
it holds true for any group \,$G$\, of nilpotent class \,$\sigma + 1$.
We consider a second induction process on \,$m \geq 0$.
The case \,$m=0$\, is simple since
\,$H:=\langle G_{(1)}, f \rangle$\, is a nilpotent group whose nilpotency class is less or equal
than \,$\sigma$\, if \,$\sigma >0$\,; indeed we have \,$H_{(1)} \subset G_{(2)}$.
We obtain \,$H=\langle f \rangle$\, for the case \,$\sigma=0$, this is the reason because we choose
a modified induction hypothesis.
Let \,$\{h_{1},\ldots,h_{n} \} \subset {\mathcal V}_{\sigma}$\, be a generator set
of \,$G_{(1)}$\, provided by Remark \ref{rem:loss}. Since
\[ p \in \mathrm{Fix} ( H_{(1)}, h_{1}, \hdots ,h_{n} ) - \ \mathrm{Fix}(f) \]
then there exists
\,$q_{k} \in \mathrm{Fix}(H) \cap \mathrm{Int}({\mathcal D}_{k})$\, by induction
hypothesis for \,$k >>0$.

Suppose that the result holds true for some \,$m \geq 0$.
We denote
\[ A=\langle G_{(1)}, g_{0},\hdots, g_{m+1} \rangle \quad \mathrm{and} \quad B=\langle G_{(1)}, g_{0},\ldots, g_{m},f \rangle. \]
Let
\,$p \in \mathrm{Fix}(A) - \mathrm{Fix}(f)$.
If the result in the lemma is not satisfied we can consider
\,$\mathrm{Fix} ( A\,,f) \cap \mathrm{Int}({\mathcal D}_{k}) = \emptyset$\,
for any \,$k \in {\mathbb N}$\, up to consider
a subsequence of \,$(n_{k})_{k \geq 1}$.
Fix \,$k \in {\mathbb N}$\, big enough.
The induction hypothesis implies that there exists
\,$y_{0} \in \mathrm{Fix}(B)  \cap \mathrm{Int}({\mathcal D}_{k})$\,
for any \,$k >>0$.
We apply the item $(iii)$ of Lemma \ref{k:gr:kapa} to the diffeomorphisms \,$f$\, and \,$g_{m+1}$\,
and the points \,$p$\, and \,$y_{0}$.
We obtain \,$\Gamma_{\!\!p,n_{k}}^{f} \cap \Gamma_{\!\!y_{0}}^{g_{m+1}} = \emptyset$\, for \,$k>>0$.
In particular the curves \,$\gamma_{k}$\, and \,$\Gamma_{\!\!y_{0}}^{g_{m+1}}$\, are disjoint
and \,$\Gamma_{\!\!y_{0}}^{g_{m+1}}$\, is contained in \,$\mathrm{Int}({\mathcal D}_{k})$.
The orbit \,${\mathcal O}_{y_{0}}(g_{m+1})$\, is contained in \,$\Gamma_{\!\!y_{0}}^{g_{m+1}}$\,
and then in \,$\mathrm{Int}({\mathcal D}_{k})$.
Since \,$f$\, does not have fixed points in \,$\gamma_{k}$\, by
Corollary \ref{co:dpi}
and \,$\overline{{\mathcal O}_{y_{0}}(g_{m+1})}$\, is contained in
\,$\mathrm{Fix}(f)$\, we deduce that
\,$\overline{{\mathcal O}_{y_{0}}(g_{m+1})}$\, is contained in \,$\mathrm{Int}({\mathcal D}_{k})$.
There exist $\omega$-recurrent points for \,$g_{m+1}$\, in
\,$\overline{{\mathcal O}_{y_{0}}(g_{m+1})}$.
Hence we can suppose that \,$y_{0}$\, is a $\omega$-recurrent point for \,$g_{m+1}$\, by
replacing \,$y_{0}$\, if necessary with another point in \,$\overline{{\mathcal O}_{y_{0}}(g_{m+1})}$.
There exists a simple closed curve \,$\alpha_{0}$\, contained in
\,$\Gamma_{\!\!y_{0}}^{g_{m+1}}$\, by Lemma \ref{le:cs}.
The compact disc \,$\Delta_{0}$\, bounded by \,$\alpha_{0}$\, is contained in
\,$\mathrm{Int}({\mathcal D}_{k})$.

Let \,$(l_{k})_{k \geq 1}$\, be an increasing sequence of positive integers
such that \,$g_{m+1}^{l_{k}}(y_{0}) \to y_{0}$.
Up to replace \,$y_{0}$\, with another point in the \,$g_{m+1}$-orbit
of \,$y_{0}$\, we can suppose \,$\alpha_{0} \subset \Gamma_{y_{0},l_{k}}^{g_{m+1}}$\,
for \,$k >>0$.
We can apply the induction hypothesis to
\,$y_{0} \in \mathrm{Fix}( G_{(1)}, g_{0},\hdots, g_{m} ) - \mathrm{Fix}(g_{m+1})$.
Analogously we obtain a point
\,$y_{1} \in \mathrm{Fix}(A) - \mathrm{Fix}(f)$\,
that is $\omega$-recurrent for \,$f$\, and a simple closed curve
\,$\alpha_{1} \subset \Gamma_{\!\!y_{1}}^{f}$\, contained in
\,$\mathrm{Int}({\Delta}_{0})$.
Let \,$\Delta_{1}$\, be the compact disc bounded by \,$\alpha_{1}$.
By repeating this process we obtain a sequence
\,${\mathcal D}_{k} \supset \Delta_{0} \supset \Delta_{1} \supset \cdots \supset \Delta_{n} \supset
\cdots$\,
such that \,$\Delta_{j+1} \subset \mathrm{Int}({\Delta}_{j})$\,
for any \,$j \geq 0$\, where \,$\partial \Delta_{j} \subset \Gamma_{\!\!y_{j}}^{g_{m+1}}$\,
for some \,$y_{j} \in \mathrm{Fix}(B) - \mathrm{Fix}(g_{m+1})$\, if \,$j$\, is even
and \,$\partial \Delta_{j} \subset \Gamma_{\!\!y_{j}}^{f}$\,
for some \,$y_{j} \in \mathrm{Fix}(A) - \mathrm{Fix}(f)$\, if \,$j$\, is odd.

We have \,$\mathrm{Fix}(A\,,f) \cap \mathrm{Int}({\mathcal D}_{k}) = \emptyset$\,
and \,$\mathrm{Fix}(f) \cap \gamma_{k}=\emptyset$.
We obtain
\,$\mathrm{Fix} (A\,,f) \cap {\mathcal D}_{k} = \emptyset$\, and
\,$\mathrm{Fix} (B\,,g_{m+1}) \cap {\mathcal D}_{k} = \emptyset$.
Therefore there exists \,$r \in {\mathbb R}^{+}$\, such that
\,$\|f(y)-y\| \geq r$\, for any \,$y \in \mathrm{Fix}(A) \cap {\mathcal D}_{k}$\, and
\,$\|g_{m+1}(y)-y\| \geq r$\, for any \,$y \in \mathrm{Fix}(B) \cap {\mathcal D}_{k}$.
Notice that \,${\mathcal O}_{y_{j}}(g_{m+1}) \subset \mathrm{Fix}(B) \cap {\mathcal D}_{k}$\,
if \,$j$\, is even
and \,${\mathcal O}_{y_{j}}(f) \subset \mathrm{Fix}(A) \cap {\mathcal D}_{k}$\, if \,$j$\, is odd.
Hence the item $(ii)$ of Lemma \ref{k:gr:kapa} implies that
\,$d(\partial \Delta_{j}, \partial \Delta_{j+1}) \geq r$\, for any \,$j \geq 0$.
We deduce that there exists a ball of radius \,$r/3$\, contained in
\,$\mathrm{Int}(\Delta_{j}) - \Delta_{j+1}$\, for any \,$j \geq 0$.
Since the area of \,$\Delta_{0}$\, is finite we obtain a contradiction.
\end{proof}

Lemma \ref{lem:fdertog} is used to find global fixed points for normal subgroups
of \,$G$. It is not clear that the $G$-orbit of such points is bounded.
This issue motivates the definition of capital points
since a capital
point for an orbit \,${\mathcal O}_{p}(f)$\, has bounded $G$-orbit under suitable conditions.
Moreover, the sets of capital points are naturally invariant by
Lemma \ref{ind:pt:02}. The existence of capital points is
guaranteed by next two lemmas.

\begin{lem}
\label{lem:index}
Let \,$G \subset \mathrm{Homeo}({\mathbb R}^{2})$\, be a $\sigma$-step nilpotent group
finitely generated in \,${\mathcal V}_{\sigma}$.
Let \,$g_{1}, \ldots, g_{m}, f \in {\mathcal V}_{\sigma} \cap G$\, and
\,$p \in \mathrm{Fix} (G_{(1)}, g_{1},\hdots,g_{m}) - \mathrm{Fix}(f)$.
Suppose there exists an increasing sequence \,$(n_{k})_{k \geq 1}$\,
of positive integers such that \,$f^{n_{k}}(p) \to p$.
Then there exists
\[ q_{k} \in \mathrm{Fix} (G_{(1)}, g_{1},\hdots,g_{m},f)  \quad \text{such that} \quad
\mathrm{Ind}_{q_{k}}(\Gamma_{\!\!p,n_{k}}^{f}) \neq 0 \quad \text{for} \quad k>>0\,. \]
\end{lem}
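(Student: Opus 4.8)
The plan is to prove Lemma \ref{lem:index} by combining the Main Lemma (Lemma \ref{lem:fdertog}) with a winding-number analysis of the closed polygonal curve $\Gamma_{\!\!p,n_{k}}^{f}$. Since $g_{0}\equiv Id$ by convention, note first that $\mathrm{Fix}(G_{(1)},g_{1},\hdots,g_{m})=\mathrm{Fix}(G_{(1)},g_{0},g_{1},\hdots,g_{m})$, so Lemma \ref{lem:fdertog} applies verbatim to every simple closed subcurve of $\Gamma_{\!\!p,n_{k}}^{f}$ and produces a common fixed point of $\langle G_{(1)},g_{1},\hdots,g_{m},f\rangle$ in the bounded disc it determines. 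The first step is then purely topological: walking along the vertices $f(p),f^{2}(p),\hdots,f^{n_{k}}(p),f(p)$ and cutting off a loop each time the path first crosses itself, one decomposes $\Gamma_{\!\!p,n_{k}}^{f}$ into finitely many simple closed subcurves $\gamma_{k}^{1},\hdots,\gamma_{k}^{s}$, each contained in $\Gamma_{\!\!p,n_{k}}^{f}$, enjoying the additivity
$$\mathrm{Ind}_{x}(\Gamma_{\!\!p,n_{k}}^{f})=\sum_{i=1}^{s}\mathrm{Ind}_{x}(\gamma_{k}^{i})\qquad\text{for every }x\notin\Gamma_{\!\!p,n_{k}}^{f},$$
where each $\mathrm{Ind}_{x}(\gamma_{k}^{i})$ equals $\pm 1$ inside the disc bounded by $\gamma_{k}^{i}$ and vanishes outside.

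Next I would guarantee that the total winding number is not identically zero. This is exactly where the recurrence hypothesis $f^{n_{k}}(p)\to p$, together with $p\notin\mathrm{Fix}(f)$, is used: the cyclic case (Lemma \ref{lem:ind:2}) already furnishes a point of nonvanishing index, so $\max_{x}|\mathrm{Ind}_{x}(\Gamma_{\!\!p,n_{k}}^{f})|\geq 1$ for $k\gg0$. I would then fix a bounded complementary component $U_{k}$ of $\Gamma_{\!\!p,n_{k}}^{f}$ on which $|\mathrm{Ind}|$ attains its maximal value $N\geq 1$. Because $N$ is maximal, crossing any edge of $\Gamma_{\!\!p,n_{k}}^{f}$ bounding $U_{k}$ strictly decreases $|\mathrm{Ind}|$, which forces the outer boundary of $U_{k}$ to be a single simple closed subcurve $\gamma_{k}\subseteq\Gamma_{\!\!p,n_{k}}^{f}$ whose disc $\mathcal{D}_{k}$ contains $U_{k}$, and on $U_{k}$ the index is constantly equal to $\pm N\neq 0$.

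Applying Lemma \ref{lem:fdertog} to $\gamma_{k}$ then yields a common fixed point $q_{k}\in\mathrm{Fix}(G_{(1)},g_{1},\hdots,g_{m},f)\cap\mathrm{Int}(\mathcal{D}_{k})$ for $k\gg0$, and Remark \ref{inv:fix} confirms these are genuine fixed points of the relevant normal subgroup. The main obstacle, and the delicate point of the argument, is that Lemma \ref{lem:fdertog} only asserts $q_{k}\in\mathrm{Int}(\mathcal{D}_{k})$, whereas I must force $q_{k}$ into the maximal-index region $U_{k}$, rather than into some sub-region of $\mathcal{D}_{k}$ where the total winding number could cancel to zero. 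To resolve this I would not apply the Main Lemma to $\mathcal{D}_{k}$ blindly but run its nested-disc construction inside $U_{k}$: replacing $\gamma_{k}$ by the innermost simple subloop of $\Gamma_{\!\!p,n_{k}}^{f}$ whose disc still meets $U_{k}$, and iterating the extremal selection, pins the resulting fixed point down to the component $U_{k}$ itself, where $\mathrm{Ind}_{q_{k}}(\Gamma_{\!\!p,n_{k}}^{f})=\pm N\neq 0$. Verifying that this localization is compatible with the recurrence and the index computations (using Corollary \ref{co:dpi} to keep fixed points away from the relevant edges) is the step I expect to require the most care.
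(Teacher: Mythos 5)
There is a genuine gap, and it sits exactly where you flagged the ``delicate point.'' The paper closes that gap with a single external input that your argument lacks: since any two intersecting oriented segments of \,$\Gamma_{\!\!p,n_{k}}^{f}$\, make an angle less than \,$\pi/2$\, for \,$k>>0$\, (Corollary \ref{co:dpi}, plus the fact that \,$[\,f^{n_{k}}(p),f(p)\,]$\, is close to \,$[\,p,f(p)\,]$\,), Lemma 3.1 of \cite{fi05} produces a simple closed subcurve \,$\gamma_{k}\subset\Gamma_{\!\!p,n_{k}}^{f}$\, with the strong property that \,$\mathrm{Ind}_{q}(\Gamma_{\!\!p,n_{k}}^{f})\neq 0$\, for \emph{every} \,$q$\, in the interior of the disc \,${\mathcal D}_{k}$\, it bounds. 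With that, one applies Lemma \ref{lem:fdertog} once and is done: wherever the fixed point lands in \,$\mathrm{Int}({\mathcal D}_{k})$, the index is automatically nonzero. Your scheme never establishes this ``index nonvanishing on the whole disc'' property, and your substitute for it does not work.

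Concretely, two steps fail. First, the outer boundary of a maximal-index complementary component \,$U_{k}$\, is in general \emph{not} a simple closed subcurve of \,$\Gamma_{\!\!p,n_{k}}^{f}$\, in the sense required by Lemma \ref{lem:fdertog} (i.e.\ a sub-arc of the traversal between two visits to a self-intersection point, as in the construction of \,$\beta$\, in Lemma \ref{le:cs}): the boundary of a face of the arrangement is typically a concatenation of pieces of non-consecutive edges glued at crossing points, so there is no disc \,${\mathcal D}_{k}$\, bounded by a subloop to which the Main Lemma applies with \,$\mathrm{Int}({\mathcal D}_{k})=U_{k}$. Second, once you retreat to some genuine simple subloop whose disc merely \emph{contains} or \emph{meets} \,$U_{k}$, Lemma \ref{lem:fdertog} only places \,$q_{k}$\, somewhere in that disc, possibly in a sub-region where your signed sum \,$\sum_{i}\mathrm{Ind}_{x}(\gamma_{k}^{i})$\, cancels to zero; the proposed iteration (``innermost subloop whose disc still meets \,$U_{k}$'') is not a well-defined terminating procedure and gives no control forcing \,$q_{k}\in U_{k}$. (A smaller issue: Lemma \ref{lem:ind:2} concerns the fixed-point index of \,$f$, i.e.\ of the vector field \,$f-Id$, inside a subloop, not the winding number of \,$\Gamma_{\!\!p,n_{k}}^{f}$\, about a point, so it does not by itself yield \,$\max_{x}|\mathrm{Ind}_{x}(\Gamma_{\!\!p,n_{k}}^{f})|\geq 1$.) The missing idea is precisely the quantitative angle control and the resulting ``good'' subloop of Lemma 3.1 of \cite{fi05}; without it the localization cannot be completed.
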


\begin{proof}
The angle in between two oriented non-disjoint
segments in \,$\Gamma_{\!\!p}^{f}$\, is less than \,$\pi /3$\, by
Corollary \ref{co:dpi}.
The segment \,$[\,f^{n_{k}}(p)\,,f(p)\,]$\, is very close to
\,$[\,p\,,f(p)\,]$\, if \,$k >>0$. Thus
the angle in between two oriented non-disjoint
segments in \,$\Gamma_{\!\!p,n_{k}}^{f}$\, is less than \,$\pi /2$\, for \,$k >>0$.
Lemma 3.1 of \cite{fi05}
implies that there exists a simple closed curve \,$\gamma_{k} \subset \Gamma_{\!\!p,n_{k}}^{f}$\,
that is oriented as \,$\Gamma_{\!\!p,n_{k}}^{f}$\, and satisfies
\,$\mathrm{Ind}_{q}(\Gamma_{\!\!p,n_{k}}^{f}) \neq 0$\, for any \,$q \in \mathrm{Int}({\mathcal D}_{k})$\, where
\,${\mathcal D}_{k}$\, is the compact disc bounded by \,$\gamma_{k}$.
There exists a point
\,$q_{k} \in  \mathrm{Fix} (G_{(1)}, g_{1},\ldots,g_{m},f) \cap \mathrm{Int}({\mathcal D}_{k})$\,
by Lemma \ref{lem:fdertog} for \,$k >>0$.
Clearly we obtain \,$\mathrm{Ind}_{q_{k}}(\Gamma_{\!\!p,n_{k}}^{f}) \neq 0$.
\end{proof}

\begin{lem}
\label{lem:clcv}
Let \,$G \subset \mathrm{Homeo}({\mathbb R}^{2})$\, be a $\sigma$-step nilpotent group
finitely generated in \,${\mathcal V}_{\sigma}$.
Let \,$g_{1}, \ldots, g_{m}, f \in {\mathcal V}_{\sigma} \cap G$. Suppose that
\,$p \in \mathrm{Fix} (G_{(1)}, g_{1},\hdots,g_{m})$\, has bounded $f$-orbit.
Then there exists \,$q \in \mathrm{Fix} (G_{(1)}, g_{1},\ldots,g_{m},f)$\, satisfying
one of the following conditions$\,:$
\begin{itemize}
\item
$q \in \overline{{\mathcal O}_{p}(f)} \,;$
\item
$q$\, is a capital point for \,${\mathcal O}_{\tilde{p}}(f)$\, where
\,$\tilde{p} \in \overline{{\mathcal O}_{p}(f)}$\,.
\end{itemize}
\end{lem}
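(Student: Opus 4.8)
The plan is to reduce the statement to a recurrence situation inside $\overline{\mathcal{O}_p(f)}$ and then to pass to a limit of the fixed points produced by Lemma \ref{lem:index}. First I set $H=\langle G_{(1)},g_{1},\ldots,g_{m}\rangle$ and note that $H\supseteq G_{(1)}$, so $H/G_{(1)}$ is a subgroup of the abelian group $G/G_{(1)}$; hence $H$ is normal in $G$ and $G/H$ is abelian. Consequently the closed set $\mathrm{Fix}(H)=\mathrm{Fix}(G_{(1)},g_{1},\ldots,g_{m})$ is $G$-invariant, and in particular $f$-invariant (the class of $f$ in $G/H$ is central, so this also follows from Remark \ref{inv:fix}). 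Since $p\in\mathrm{Fix}(H)$ has bounded $f$-orbit and $f\in\mathcal{V}_{\sigma}\subseteq\mathcal{U}$ is a homeomorphism by Lemma \ref{elondifeo}, the set $K_{0}:=\overline{\mathcal{O}_{p}(f)}$ is a compact $f$-invariant subset of $\mathrm{Fix}(H)$ on which $f$ restricts to a homeomorphism.

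Next I choose an $\omega$-recurrent point. By Zorn's lemma there is a minimal nonempty closed $f$-invariant set $M\subseteq K_{0}$, and I pick $\tilde{p}\in M\subseteq\overline{\mathcal{O}_{p}(f)}\cap\mathrm{Fix}(H)$, which is $\omega$-recurrent for $f$. If $\tilde{p}\in\mathrm{Fix}(f)$ then $q:=\tilde{p}\in\mathrm{Fix}(G_{(1)},g_{1},\ldots,g_{m},f)\cap\overline{\mathcal{O}_{p}(f)}$ realizes the first alternative and we are done. So I assume $\tilde{p}\notin\mathrm{Fix}(f)$ and fix an increasing sequence $(n_{k})_{k\geq1}$ with $f^{n_{k}}(\tilde{p})\to\tilde{p}$. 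Applying Lemma \ref{lem:index} to $\tilde{p}\in\mathrm{Fix}(G_{(1)},g_{1},\ldots,g_{m})-\mathrm{Fix}(f)$ and this sequence yields points $q_{k}\in\mathrm{Fix}(G_{(1)},g_{1},\ldots,g_{m},f)$ with $\mathrm{Ind}_{q_{k}}(\Gamma^{f}_{\tilde{p},n_{k}})\neq0$ for $k$ large. Each $q_{k}$ lies in the disc bounded by a simple closed subcurve of $\Gamma^{f}_{\tilde{p},n_{k}}\subseteq\mathrm{Conv}(\mathcal{O}_{\tilde{p}}(f))$, so all the $q_{k}$ stay inside a fixed bounded set; passing to a subsequence I may assume $q_{k}\to q_{\infty}$, with $q_{\infty}\in\mathrm{Fix}(H,f)$ since this set is closed.

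It remains to identify $q_{\infty}$ with one of the two desired conclusions, which is the main point. If $q_{\infty}\in\overline{\mathcal{O}_{\tilde{p}}(f)}$, then $\tilde{p}\in\overline{\mathcal{O}_{p}(f)}$ forces $\overline{\mathcal{O}_{\tilde{p}}(f)}\subseteq\overline{\mathcal{O}_{p}(f)}$, so $q_{\infty}\in\mathrm{Fix}(G_{(1)},g_{1},\ldots,g_{m},f)\cap\overline{\mathcal{O}_{p}(f)}$ satisfies the first alternative. Otherwise $q_{\infty}\in\mathrm{Fix}(f)$ lies outside the compact set $K:=\overline{\mathcal{O}_{\tilde{p}}(f)}$, and Lemma \ref{lem:iso} provides $\delta>0$ with $B(q_{\infty},\delta)\cap[\,y\,,f(y)\,]=\emptyset$ for every $y\in K$. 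Hence the segments $[\,f^{i}(\tilde{p})\,,f^{i+1}(\tilde{p})\,]$ of $\Gamma^{f}_{\tilde{p},n_{k}}$ avoid $B(q_{\infty},\delta)$, while the closing segment $[\,f^{n_{k}}(\tilde{p})\,,f(\tilde{p})\,]$ converges to $[\,\tilde{p}\,,f(\tilde{p})\,]\subseteq K$ and so avoids $B(q_{\infty},\delta/2)$ for $k$ large. Thus $q_{\infty}$ stays at distance $\geq\delta/2$ from $\Gamma^{f}_{\tilde{p},n_{k}}$, so $\mathrm{Ind}_{q_{\infty}}(\Gamma^{f}_{\tilde{p},n_{k}})$ is well-defined; and since $q_{k}\to q_{\infty}$, for $k$ large the segment $[\,q_{\infty}\,,q_{k}\,]$ remains in $B(q_{\infty},\delta/2)$ and misses the curve, whence the winding number is constant along it and $\mathrm{Ind}_{q_{\infty}}(\Gamma^{f}_{\tilde{p},n_{k}})=\mathrm{Ind}_{q_{k}}(\Gamma^{f}_{\tilde{p},n_{k}})\neq0$. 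Therefore $q_{\infty}$ is a capital point for $\mathcal{O}_{\tilde{p}}(f)$, giving the second alternative. The delicate step is exactly this passage to the limit: one must prevent $q_{\infty}$ from landing on the curves $\Gamma^{f}_{\tilde{p},n_{k}}$, for which Lemma \ref{lem:iso} combined with the recurrence $f^{n_{k}}(\tilde{p})\to\tilde{p}$ is tailor-made.
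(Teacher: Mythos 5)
Your proposal is correct and follows essentially the same route as the paper: pick an $\omega$-recurrent point $\tilde{p}$ in $\overline{{\mathcal O}_{p}(f)}$, invoke Lemma \ref{lem:index} to produce the points $q_{k}$ with non-vanishing winding number, observe they stay in $\mathrm{Conv}({\mathcal O}_{\tilde{p}}(f))$ so a subsequence converges to some $q$ in the closed set $\mathrm{Fix}(G_{(1)},g_{1},\ldots,g_{m},f)$, and use Lemma \ref{lem:iso} to keep $q$ at positive distance from the curves $\Gamma^{f}_{\tilde{p},n_{k}}$ so that $\mathrm{Ind}_{q}(\Gamma^{f}_{\tilde{p},n_{k}})=\mathrm{Ind}_{q_{k}}(\Gamma^{f}_{\tilde{p},n_{k}})\neq 0$. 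Your only deviations are cosmetic (organizing the initial case split around $\tilde{p}\in\mathrm{Fix}(f)$ rather than $\overline{{\mathcal O}_{p}(f)}\cap\mathrm{Fix}(f)\neq\emptyset$, and treating the closing segment $[\,f^{n_{k}}(\tilde{p})\,,f(\tilde{p})\,]$ explicitly, which the paper leaves implicit in its ``$k>>0$'').
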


\begin{proof}
If \,$\overline{{\mathcal O}_{p}(f)} \cap \mathrm{Fix}(f) \neq \emptyset$\,
there is nothing to prove. Let us suppose
\,$\overline{{\mathcal O}_{p}(f)} \cap \mathrm{Fix}(f) = \emptyset$\, and
let \,$\tilde{p}\notin \text{Fix}(f)$\, be a $\omega$-recurrent point for \,$f$\, in \,$\overline{{\mathcal O}_{p}(f)}$.
Consider an increasing sequence \,$(n_{k})_{k \geq 1}$\,  of positive integers such that
\,$f^{n_{k}}(\tilde{p}) \to \tilde{p}$.
There exists \,$q_{k} \in \mathrm{Fix} (G_{(1)}, g_{1},\ldots,g_{m},f)$\,
such that \,$\mathrm{Ind}_{q_{k}}(\Gamma_{\!\!\tilde{p},n_{k}}^{f}) \neq 0$\,
for \,$k$\, big enough by Lemma \ref{lem:index}.
The condition \,$\mathrm{Ind}_{q_{k}}(\Gamma_{\!\!\tilde{p},n_{k}}^{f}) \neq 0$\,
implies that \,$q_{k}$\, belongs to \,$\mathrm{Conv}({\mathcal O}_{\tilde{p}}(f))$.
Since \,${\mathcal O}_{\tilde{p}}(f) \subset  \overline{{\mathcal O}_{p}(f)}$\,
the sequence \,$(q_{k})_{k \geq 1}$\, is bounded.
Up to consider a subsequence we can suppose \,$q_{k} \to q$.
Lemma \ref{lem:iso} provides a positive radius \,$\delta$\, such that
\,$B(q\,,\delta) \cap \Gamma_{\!\!\tilde{p},n_{k}}^{f} = \emptyset$\, for
\,$k >>0$.
Since \,$\mathrm{Ind}_{q}(\Gamma_{\!\!\tilde{p},n_{k}}^{f}) = \mathrm{Ind}_{q_{k}}(\Gamma_{\!\!\tilde{p},n_{k}}^{f})$\,
for \,$k >>0$\, then \,$q$\, is a capital point for
\,${\mathcal O}_{\tilde{p}}(f)$.
\end{proof}

\vskip10pt

Let \,$G$\, be a nilpotent subgroup of \,$\mathrm{Homeo}({\mathbb R}^{2})$\,
generated by $\epsilon$-Lipschitz homeomorphisms
with respect to the identity.
Our goal is proving that if the group has a bounded orbit \,${\mathcal O}$\, then
either there exists a global fixed point in the closure of \,${\mathcal O}$\, or
\,${\mathcal O}$\, encloses a global fixed point in the following sense\,: there exists a
global fixed point in the interior of  \,$\mathrm{Conv}({{\mathcal O}})$.
It is analogous in our setting to a result for abelian groups of $C^{1}$-diffeomorphisms
by Franks, Handel and Parwani \cite[Theorem 6.1]{fhp01}.

\vskip10pt

\paragraph{\bf Definitions.}
Let \,$G$\, be a subgroup of \,$\mathrm{Homeo}({\mathbb R}^{2})$\, and \,$p \in {\mathbb R}^{2}$\,
be a point whose $G$-orbit is bounded.
Given an orientation-preserving homeomorphism  \,$f \in G$\, we define\,:
$$\mathcal{A}_{G}(\,p\,,f\,) : =  \bigcup_{y \in \overline{{\mathcal  O}_{p}(G)}} \ [\,y\,,f(y)\,]
\quad \text{and} \quad  \epsilon_{G}(\,p\,,f\,)=
d \big(\mathcal{A}_{G}(p\,,f) \,, \mathrm{Fix}(f)\big) \,.$$%%
We also define \,$\mathcal{B}_{G}(\,p\,,f\,)$\, as the union of
\,$\mathcal{A}_{G}(\,p\,,f\,)$\, and the bounded connected components of
\,${\mathbb R}^{2} - \mathcal{A}_{G}(\,p\,,f\,)$.

\vskip10pt

Let us explain the idea behind the above definitions.
The segment \,$[\,f^{n_{k}}(z)\,,f(z)\,]$\, converges to the segment
\,$[\,z\,,f(z)\,]$\, when \,$f^{n_{k}}(z) \rightarrow z$\,.
As a consequence if
\,$f \in \mathcal{U}$\, and \,$\overline{\mathcal{O}_{p}(G)} \cap \mathrm{Fix}(f)=\emptyset$\,
then the capital points for \,$\mathcal{O}_{z}(f)$\, belong to the bounded connected
components of \,$\RR-\mathcal{A}_{G}(\,p\,,f)$\, when \,$z\in \overline{\mathcal{O}_{p}(G)}$\,.
Thus the set \,$\mathcal{B}_{G}(\,p\,,f\,)$\, is a natural place to localize
global fixed points.

\vskip10pt

\begin{remark}
\label{rem:nfp}
Consider the setting in the above definitions. The set \,$\mathcal{A}_{G}(\,p\,,f\,)$\, is compact. Moreover
if \,$f \in {\mathcal U}$\, and
\,$\overline{{\mathcal  O}_{p}(G)} \cap \mathrm{Fix}(f) = \emptyset$\,
then \,$\epsilon_{G}(\,p\,,f\,)$\, is greater than \,$0$\, by Corollary \ref{co:dpi}\,
and we
have that \,$B(z\,,\epsilon_{G}(\,p\,,f\,))$\, is contained in
\,$\RR-\mathcal{A}_{G}(\,p\,,f\,)$\, for all \,$z\in \mathrm{Fix}(f)$\,. In particular if
\,$z\in \mathrm{Fix}(f)$\, is not contained in the unbounded connected componente of
\,$\RR-\mathcal{A}_{G}(\,p\,,f\,)$\, then \,$z\in \mathcal{B}_{G}(\,p\,,f\,)$\, and consequently
\,$B(\,z\,,\epsilon_{G}(\,p\,,f\,))$\, is contained in
\,$\mathcal{B}_{G}(\,p\,,f\,)$\,.

\end{remark}

\vskip5pt

Given \,$A\subset \RR$\, and \,$r>0$\, we define
\,$B(A\,,r)=\cup_{z\in A}B(z\,,r)$\,.

\begin{pro}
\label{pro:im}
Let \,$G \subset \mathrm{Homeo}({\mathbb R}^{2})$\, be a finitely generated
$\sigma$-step nilpotent subgroup generated
in \,${\mathcal V}_{\sigma}$.
Consider a normal subgroup \,$H$\, of \,$G$\, generated in \,${\mathcal V}_{\sigma}$\,   and
\,$f \in G \cap {\mathcal V}_{\sigma}$\, such that the class of \,$f$\, in \,$G/H$\, belongs to
\,$Z(G/H)$\,.
Let \,$p \in \mathrm{Fix}(H)$\, with bounded $G$-orbit.
Then there exists  \,$q\in \mathrm{Fix}(H,f)$\,
such that either
\[ q \in \overline{{\mathcal O}_{p}(G)} \ \ \ \text{or} \ \ \
\epsilon_{G}(\,p\,,f\,)>0 \ \ \ \text{and} \ \ \
B\big(\mathrm{Conv}(\overline{{\mathcal O}_{q}(G)})\,,\epsilon_{G}(\,p\,,f\,)\big) \subset
\mathrm{Conv}(\overline{{\mathcal O}_{p}(G)})\,. \]
We always have
\, ${\mathcal O}_{q}(G) \subset \mathrm{Conv}(\overline{{\mathcal O}_{p}(G)})$\,.
\end{pro}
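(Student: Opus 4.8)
The plan is to split the argument according to whether $\overline{\mathcal{O}_p(G)}$ meets $\mathrm{Fix}(f)$, producing $q$ either directly from the orbit closure or from Lemma \ref{lem:clcv} as a capital point, and then to localize the whole $G$-orbit of $q$ inside $K:=\mathrm{Conv}(\overline{\mathcal{O}_p(G)})$. First I would record the structural facts used throughout: since $H$ is normal, $\mathrm{Fix}(H)$ is $G$-invariant, so $p\in\mathrm{Fix}(H)$ forces $\overline{\mathcal{O}_p(G)}\subset\mathrm{Fix}(H)$; moreover $\overline{\mathcal{O}_p(G)}$ is $G$-invariant and $K$ is compact and convex. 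If $p\in\mathrm{Fix}(f)$ we may take $q=p\in\overline{\mathcal{O}_p(G)}$, so assume $p\notin\mathrm{Fix}(f)$; note $p$ has bounded $f$-orbit since its $G$-orbit is bounded. In the easy case $\overline{\mathcal{O}_p(G)}\cap\mathrm{Fix}(f)\neq\emptyset$ I would simply pick $q$ in this intersection: then $q\in\mathrm{Fix}(H)\cap\mathrm{Fix}(f)=\mathrm{Fix}(H,f)$ and $q\in\overline{\mathcal{O}_p(G)}$, the first alternative, while $\mathcal{O}_q(G)\subset\overline{\mathcal{O}_p(G)}\subset K$ by invariance; no analytic input is needed.

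The substantive case is $\overline{\mathcal{O}_p(G)}\cap\mathrm{Fix}(f)=\emptyset$. Then $\epsilon_G(\,p\,,f\,)>0$ by Remark \ref{rem:nfp}, and since $\overline{\mathcal{O}_p(f)}\subset\overline{\mathcal{O}_p(G)}$ is also disjoint from $\mathrm{Fix}(f)$, Lemma \ref{lem:clcv} produces $q\in\mathrm{Fix}(H,f)$ (writing $\mathrm{Fix}(H)$ in the form $\mathrm{Fix}(G_{(1)},g_1,\dots,g_m)$ via a generating family of $H$ in $\mathcal{V}_\sigma$); the alternative $q\in\overline{\mathcal{O}_p(f)}$ is excluded because $\overline{\mathcal{O}_p(f)}\cap\mathrm{Fix}(f)=\emptyset$, so $q$ is a capital point for $\mathcal{O}_{\tilde{p}}(f)$ with $\tilde{p}\in\overline{\mathcal{O}_p(f)}$. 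The core is to upgrade this single enclosure to a uniform one along the orbit. Writing any $g\in G$ as a word in the generators (which lie in $\mathcal{V}_\sigma\subset\mathcal{V}_1$, so their inverses are controlled), repeated application of Lemma \ref{ind:pt:02} shows that $g(q)$ is a capital point for $\mathcal{O}_{g(\tilde{p})}(f)$, where $g(\tilde{p})\in\overline{\mathcal{O}_p(G)}$. By the discussion following the definitions of $\mathcal{A}_G(\,p\,,f\,)$ and $\mathcal{B}_G(\,p\,,f\,)$, each such capital point lies in a bounded component of $\mathbb{R}^2-\mathcal{A}_G(\,p\,,f\,)$, hence in $\mathcal{B}_G(\,p\,,f\,)$, and Remark \ref{rem:nfp} then gives $B(g(q),\epsilon_G(\,p\,,f\,))\subset\mathcal{B}_G(\,p\,,f\,)$.

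It remains to pass from points to the convex hull. I would first check $\mathcal{B}_G(\,p\,,f\,)\subset K$: indeed $\mathcal{A}_G(\,p\,,f\,)=\bigcup_{y\in\overline{\mathcal{O}_p(G)}}[\,y\,,f(y)\,]\subset K$ because $y$ and $f(y)$ both lie in the $G$-invariant set $\overline{\mathcal{O}_p(G)}$, and the complement of the convex compact set $K$ is connected, unbounded and disjoint from $\mathcal{A}_G(\,p\,,f\,)$, so every bounded component of $\mathbb{R}^2-\mathcal{A}_G(\,p\,,f\,)$ is forced into $K$. Consequently $\mathcal{O}_q(G)\subset\mathcal{B}_G(\,p\,,f\,)\subset K$, and $B(g(q),\epsilon_G(\,p\,,f\,))\subset K$ for every $g\in G$. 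Passing to the closure (using that the balls are open) yields $B(\overline{\mathcal{O}_q(G)},\epsilon_G(\,p\,,f\,))\subset K$, and since the Minkowski sum with a ball commutes with taking convex hulls while $K$ is convex, this promotes to $B(\mathrm{Conv}(\overline{\mathcal{O}_q(G)}),\epsilon_G(\,p\,,f\,))\subset K$, the second alternative. The final assertion $\mathcal{O}_q(G)\subset K$ then holds in both cases.

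The main obstacle is the substantive case: turning the existence of a single capital point into uniform control over the entire $G$-orbit. This rests on the commuting relation $g(f^j(\tilde{p}))=f^j(g(\tilde{p}))$, coming from the centrality of the class of $f$ in $G/H$ together with $\tilde{p}\in\mathrm{Fix}(H)$, and on the homotopy invariance of winding numbers (Lemma \ref{homot} feeding Lemma \ref{ind:pt:02}), which together guarantee that applying group elements preserves both the capital-point property and the nonvanishing of the index. By comparison, the convex-geometric bookkeeping of the last paragraph—confining $\mathcal{B}_G(\,p\,,f\,)$ to $K$ and exchanging the Minkowski sum with the convex hull—is routine.
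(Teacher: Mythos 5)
Your overall architecture coincides with the paper's: the same dichotomy on \,$\overline{{\mathcal O}_{p}(G)} \cap \mathrm{Fix}(f)$, the production of a capital point via Lemma \ref{lem:clcv}, its propagation along the $G$-orbit, and the confinement of \,$\mathcal{B}_{G}(\,p\,,f\,)$\, inside \,$\mathrm{Conv}(\overline{{\mathcal O}_{p}(G)})$. Two of your deviations are harmless: propagating the capital-point property only along \,${\mathcal O}_{q}(G)$\, by iterating Lemma \ref{ind:pt:02} and recovering the closure afterwards by a limit argument with open balls is a legitimate substitute for the paper's direct treatment of limit points via Lemma \ref{fechopontocap}, and the convex-geometric bookkeeping at the end is the same as in the paper, where it is left implicit.

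One step fails as written, namely the parenthetical claim that \,$\mathrm{Fix}(H)$\, can be written as \,$\mathrm{Fix}(G_{(1)},g_{1},\hdots,g_{m})$\, for a generating family \,$g_{1},\hdots,g_{m}$\, of \,$H$. Lemma \ref{lem:clcv} requires the base point to lie in \,$\mathrm{Fix}(G_{(1)},g_{1},\hdots,g_{m})$, hence in \,$\mathrm{Fix}(G_{(1)})$\,; but the hypothesis only gives \,$p \in \mathrm{Fix}(H)$, and \,$H$\, need not contain \,$G_{(1)}$\, (in the applications in Theorem \ref{teo:mai} it generally does not: there \,$H$\, is generated by the first few terms of a central series, e.g.\ a single central element). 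The paper circumvents this by applying Lemma \ref{lem:clcv} not to \,$G$\, but to the subgroup \,$J=\langle H,f \rangle$\,: since \,$J/H$\, is cyclic, one has \,$J_{(1)} \subset H$, so \,$\mathrm{Fix}(H)=\mathrm{Fix}(J_{(1)},g_{1},\hdots,g_{m})$\, and the lemma applies with \,$J$\, as the ambient nilpotent group, yielding \,$q \in \mathrm{Fix}(J)=\mathrm{Fix}(H,f)$\, that is a capital point for \,${\mathcal O}_{\tilde{p}}(f)$. Relatedly, you should justify that \,$H$\, admits a \emph{finite} generating set in \,${\mathcal V}_{\sigma}$\,, which Lemma \ref{lem:clcv} needs; the paper obtains this from the fact that finitely generated nilpotent groups are polycyclic, so every subgroup is finitely generated. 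With these two repairs your argument matches the paper's proof.
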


\begin{proof}
We denote \,$J=\langle H,f \rangle$.
If \,$\overline{{\mathcal  O}_{p}(G)} \cap \mathrm{Fix}(f) \neq \emptyset$\, we
choose \,$q \in \overline{{\mathcal  O}_{p}(G)} \cap \mathrm{Fix}(f)$.
It is clear that \,$q\in\mathrm{Fix}(H,f)$\, and \,$q \in \overline{{\mathcal  O}_{p}(G)}$\, by construction.

Let us suppose that \,$\overline{{\mathcal  O}_{p}(G)} \cap \mathrm{Fix}(f) = \emptyset$.
We deduce
\,$\mathcal{A}_{G}(\,p\,,f\,) \cap \mathrm{Fix}(f) = \emptyset$\, and \,$\epsilon_{G}(\,p\,,f\,)>0$\,
by Remark \ref{rem:nfp}.

The group \,$G$\, is a finitely generated nilpotent group and hence polycyclic
(cf. \cite{Karga}[Theorem 17.2.2]).
Therefore any subgroup of \,$G$\,  is finitely generated
(cf. \cite{Karga}[Theorem 19.2.3]) and we obtain
\,$H = \langle J_{(1)}, g_{1},\hdots,g_{m} \rangle$\,
for some \,$g_{1},\ldots,g_{m} \in {\mathcal V}_{\sigma}$.

We remind the reader that \,$\overline{{\mathcal  O}_{p}(G)} \cap \mathrm{Fix}(J) = \emptyset$\,.
Now, applying Lemma \ref{lem:clcv} to \,$J\hskip1pt,g_{1},\ldots,g_{m},f$\, and \,$p\in \mathrm{Fix}(J_{(1)},g_{1},\ldots,g_{m})=\mathrm{Fix}(H)$\, we conclude there exists \,$q\in \mathrm{Fix}(J)$\, that is a capital point associated to
\,$\mathcal{O}_{\tilde{p}}(f)$\, for some \,$\tilde p \in \overline{{\mathcal O}_{p}(f)}$\,.

Let \,$\{h_{1},\hdots,h_{n}\} \subset {\mathcal V}_{\sigma}$\, be a set such that
\,$\langle H,f,h_{1},\hdots,h_{n} \rangle=G$.
Since
\[ \overline{{\mathcal O}_{q}(h_{1},\hdots,h_{n})} \cap
\overline{{\mathcal O}_{\tilde{p}}(h_{1},\hdots,h_{n},f)} \subset
\mathrm{Fix}(J) \cap \overline{{\mathcal  O}_{p}(G)} = \emptyset \]
we can apply Lemma \ref{fechopontocap}.
Let \,$q' \in \overline{{\mathcal O}_{q}(h_{1},\hdots,h_{n})}=\overline{{\mathcal  O}_{q}(G)}$.
The point \,$q'$\, is a capital point associated to \,${\mathcal O}_{z}(f)$\, for some
\,$z \in {\mathcal O}_{\tilde{p}}(h_{1},\hdots,h_{n})$.
Therefore \,$q'$\, belongs to \,$\mathcal{B}_{G}(\,z\,,f\,)$\, and then to
\,$\mathcal{B}_{G}(\,p\,,f\,)$\,
since \,$z \in \overline{{\mathcal  O}_{p}(G)}$.
From Remark \ref{rem:nfp} we deduce that
\,$B(q',\epsilon_{G}(\,p\,,f\,)) \subset \mathcal{B}_{G}(\,p\,,f\,)$. We obtain
\[ B\big(\overline{{\mathcal  O}_{q}(G)}\,, \epsilon_{G}(\,p\,,f\,)\big) \subset \mathcal{B}_{G}(\,p\,,f\,)
\subset \mathrm{Conv}(\overline{{\mathcal  O}_{p}(G)}) \]
as we wanted to prove.
\end{proof}

Finally we can complete the proof of the Main Theorem.

\begin{defn*}
\label{def:central}
Let \,$G$\, be a  group and \,$f_{0}=Id$\,.
We say that the sequence \,$f_{1},\ldots,f_{n}$\,  is a \,{\it central series}\, for
\,$G$\,
if\,:
\begin{itemize}
\item
$G=\langle f_{1},\hdots,f_{n} \rangle$\,;

\item
$\langle f_{0},\hdots,f_{j} \rangle$\, is a normal subgroup of
\,$G$\, for any \,$0 \leq j \leq n$\,
and the class of \,$f_{j+1}$\, belongs to the center of \,$G/\langle f_{0},\hdots,f_{j} \rangle$\,
for any \,$0 \leq j < n$.

\end{itemize}

Let \,$\mathcal{S}$\, be a set of generators for \,$G$.
We say that a central series \,$f_{1}, \ldots, f_{n}$\, is   associated to
\,${\mathcal S}$\, if \,$\{f_{1},\hdots,f_{n}\} \subset \cup_{j=0}^{\infty} \, {\mathcal S}_{(j)}$.
\end{defn*}

\begin{remark}
\label{rem:central}
Let \,$G$\, be a $\sigma$-step nilpotent subgroup of \,$\mathrm{Homeo}({\mathbb R}^{2})$\,
generated by a finite subset \,${\mathcal S}$.
The sequence \,${\mathcal S}_{(\sigma-1)}, \ldots, {\mathcal S}_{(0)}$\,
(we can choose any order in each \,${\mathcal S}_{(j)}$)
is a central series
associated to \,${\mathcal S}$\, by Lemma \ref{lem:censer}.
\end{remark}

\begin{theo}
\label{teo:mai}
Let \,$G \subset \mathrm{Homeo}({\mathbb R}^{2})$\, be a $\sigma$-step nilpotent subgroup
generated in \,${\mathcal V}_{\sigma+1}$.  Let \,$p \in {\mathbb R}^{2}$\, with  bounded $G$-orbit.
Then there exists \,$q \in \mathrm{Fix}(G)$\,
such that either \,$q \in \overline{{\mathcal O}_{p}(G)}$\,
or \,$q \in \mathrm{Int}(\mathrm{Conv}({\mathcal O}_{p}(G)))$.
\end{theo}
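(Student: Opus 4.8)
The plan is to exploit that a finitely generated nilpotent group is a tower of cyclic central extensions, applying Proposition~\ref{pro:im} once at each floor of the tower, and to reduce the general case to the finitely generated one by compactness. Assume first that $G$ is generated by a finite set $\mathcal{S}\subset\mathcal{V}_{\sigma+1}$. By Remark~\ref{rem:central} the concatenation $\mathcal{S}_{(\sigma-1)},\ldots,\mathcal{S}_{(0)}$ (in any order inside each $\mathcal{S}_{(k)}$) is a central series $f_1,\ldots,f_n$ associated to $\mathcal{S}$: thus $G=\langle f_1,\ldots,f_n\rangle$, each $H_j:=\langle f_1,\ldots,f_j\rangle$ is normal in $G$, and the class of $f_{j+1}$ lies in $Z(G/H_j)$. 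Every $f_i$ belongs to $\mathcal{V}_\sigma$, since the elements of $\mathcal{S}_{(0)}=\mathcal{S}$ lie in $\mathcal{V}_{\sigma+1}\subset\mathcal{V}_\sigma$, while those of $\mathcal{S}_{(k)}$ with $1\le k\le\sigma-1$ are iterated commutators of depth $k\le\sigma$ of elements of $\mathcal{V}_{\sigma+1}$ and hence lie in $\mathcal{V}_\sigma$ by item $(iv)$ of Lemma~\ref{comutpropr}; in particular $G$ and each $H_j$ are generated in $\mathcal{V}_\sigma$.

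Now I build points $q_0=p,q_1,\ldots,q_n$ with $q_j\in\mathrm{Fix}(H_j)$ and $\mathcal{O}_{q_j}(G)\subset\mathrm{Conv}(\overline{\mathcal{O}_p(G)})$ by induction on $j$. The point $q_j$ has bounded $G$-orbit because $\mathrm{Conv}(\overline{\mathcal{O}_p(G)})$ is compact, so the data $(G,H_j,f_{j+1},q_j)$ meets every hypothesis of Proposition~\ref{pro:im}; it yields $q_{j+1}\in\mathrm{Fix}(H_j,f_{j+1})=\mathrm{Fix}(H_{j+1})$ with $\mathcal{O}_{q_{j+1}}(G)\subset\mathrm{Conv}(\overline{\mathcal{O}_{q_j}(G)})$ and the alternative: either (a) $q_{j+1}\in\overline{\mathcal{O}_{q_j}(G)}$, or (b) $\epsilon_G(q_j,f_{j+1})>0$ together with $B\big(\mathrm{Conv}(\overline{\mathcal{O}_{q_{j+1}}(G)}),\epsilon_G(q_j,f_{j+1})\big)\subset\mathrm{Conv}(\overline{\mathcal{O}_{q_j}(G)})$. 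Since $H_n=G$, the endpoint $q:=q_n$ lies in $\mathrm{Fix}(G)$.

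To place $q$ I follow the alternatives. If (a) holds at every floor then, since orbit closures are $G$-invariant, $q_{j+1}\in\overline{\mathcal{O}_{q_j}(G)}$ forces $\overline{\mathcal{O}_{q_{j+1}}(G)}\subset\overline{\mathcal{O}_{q_j}(G)}$, so $q\in\overline{\mathcal{O}_p(G)}$. Otherwise choose a floor $j_0$ where (b) holds; then $\mathrm{Conv}(\overline{\mathcal{O}_{q_{j_0+1}}(G)})\subset\mathrm{Int}(\mathrm{Conv}(\overline{\mathcal{O}_{q_{j_0}}(G)}))$, and chaining the always valid inclusions $\mathrm{Conv}(\overline{\mathcal{O}_{q_{j+1}}(G)})\subset\mathrm{Conv}(\overline{\mathcal{O}_{q_j}(G)})$ on both sides of $j_0$ gives
\[ q\in\mathrm{Conv}(\overline{\mathcal{O}_{q_n}(G)})\subset\mathrm{Int}\big(\mathrm{Conv}(\overline{\mathcal{O}_p(G)})\big). \]
Alternative (b) also forces $\mathrm{Conv}(\overline{\mathcal{O}_p(G)})$ to have nonempty interior, so this set has the same interior as $\mathrm{Conv}(\mathcal{O}_p(G))$ and $q\in\mathrm{Int}(\mathrm{Conv}(\mathcal{O}_p(G)))$, completing the finitely generated case.

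For arbitrary $G$ generated by $\mathcal{S}\subset\mathcal{V}_{\sigma+1}$, apply the finitely generated case to each $G'=\langle\mathcal{S}'\rangle$ with $\mathcal{S}'\subset\mathcal{S}$ finite to get $q'\in\mathrm{Fix}(G')\cap\mathrm{Conv}(\overline{\mathcal{O}_p(G)})$; as these are nonempty compact sets and the finite subsets $\mathcal{S}'$ are directed by inclusion, the finite intersection property produces a global fixed point of $G$ in $\mathrm{Conv}(\overline{\mathcal{O}_p(G)})$. The main obstacle I expect is recovering the sharp dichotomy here: the idea is that if $\mathrm{Fix}(G)\cap\overline{\mathcal{O}_p(G)}=\emptyset$ then, applying the finite intersection property to the compacts $\mathrm{Fix}(g)\cap\overline{\mathcal{O}_p(G)}$, some finite $\mathcal{S}_0$ already satisfies $\mathrm{Fix}(\langle\mathcal{S}_0\rangle)\cap\overline{\mathcal{O}_p(G)}=\emptyset$, which forces the fixed point of every finitely generated $G'\supseteq\langle\mathcal{S}_0\rangle$ into $\mathrm{Int}(\mathrm{Conv}(\mathcal{O}_p(G')))\subset\mathrm{Int}(\mathrm{Conv}(\mathcal{O}_p(G)))$; one then has to keep the limiting fixed point off $\partial\,\mathrm{Conv}(\overline{\mathcal{O}_p(G)})$, using that the extreme points of this hull lie in $\overline{\mathcal{O}_p(G)}$.
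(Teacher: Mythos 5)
Your treatment of the finitely generated case is correct and is essentially the paper's argument: a central series in $\mathcal{V}_\sigma$ via Remarks \ref{rem:loss} and \ref{rem:central}, one application of Proposition \ref{pro:im} per floor, and the observation that alternative (b) at any floor pushes $q$ into the interior. No issues there.

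The gap is in the passage to an infinitely generated $G$, and you have correctly located it but not closed it. Your finite intersection property argument only applies to a \emph{compact} target, so it lands a point of $\mathrm{Fix}(G)$ in $\mathrm{Conv}(\overline{\mathcal{O}_p(G)})$ --- weaker than the dichotomy. Your proposed repair does not work as stated: even after reducing to the case $\mathrm{Fix}(G)\cap\overline{\mathcal{O}_p(G)}=\emptyset$ (that reduction is fine), the fixed points $q'$ of the finitely generated subgroups all lie in the \emph{open} set $\mathrm{Int}(\mathrm{Conv}(\mathcal{O}_p(G)))$, and a limit of such points can sit on $\partial\,\mathrm{Conv}(\mathcal{O}_p(G))$. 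Knowing that the extreme points of the hull lie in $\overline{\mathcal{O}_p(G)}$ does not exclude this, because in the plane the boundary of the hull also contains the open segments joining extreme points, and the limiting fixed point could lie on one of those. What is missing is a \emph{uniform} lower bound on the distance from the fixed points $q'$ to the boundary. The paper obtains exactly this: by compactness of $\overline{\mathcal{O}_p(G)}\cap\mathrm{Fix}(G_{(j)})$ and a minimality argument it fixes finitely many $f_1,\ldots,f_a\in\mathcal{S}_{(j-1)}$ and a point $\hat p_0$ with $\overline{\mathcal{O}_{\hat p_0}(G)}\cap\mathrm{Fix}(f_a)=\emptyset$, and then uses the constant $\epsilon=\epsilon_G(\hat p_0,f_a)>0$, which is defined from the \emph{whole} group $G$ and therefore satisfies $\epsilon\leq\epsilon_M(\hat p_0,f_a)$ for every finitely generated $M\leq G$. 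Proposition \ref{pro:im} then yields, for every sufficiently large finitely generated subgroup, a global fixed point in the compact set $\mathfrak{F}$ of points at distance at least $\epsilon$ from $\partial\,\mathrm{Conv}(\mathcal{O}_p(G))$, and the finite intersection property is applied to $\mathrm{Fix}(\cdot)\cap\mathfrak{F}$. You would need to reproduce some version of this uniform quantitative step; the qualitative limit argument you sketch cannot replace it.
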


\begin{proof}
Suppose that \,$G$\, is finitely generated.
Let \,${\mathcal S}$\, be a finite generator set of \,$G$\, contained in \,${\mathcal V}_{\sigma+1}$.
There exists
a central series \,$\{g_{1},\ldots,g_{n}\}$\, associated to \,${\mathcal S}$\,
and contained in \,${\mathcal V}_{\sigma}$\, by Remarks \ref{rem:loss} and
\ref{rem:central}. From Proposition \ref{pro:im}
there exists
a sequence
$$p_{0}=p \ , \ p_{i} \in \mathrm{Fix}(g_{1},\hdots,g_{i}) \quad \text{for} \quad
1 \leq i \leq n $$
such that
either
$$p_{i+1} \in \overline{{\mathcal O}_{p_{i}}(G)} \quad
\text{or} \quad \overline{{\mathcal O}_{p_{i+1}}(G)} \subset
\mathrm{Int}(\mathrm{Conv}(\overline{{\mathcal O}_{p_{i}}(G)})) \quad \text{for any} \quad
0\leq i < n \,.$$
Now we define \,$q:=p_{n}$\, and we obtain after an easy calculation that either
\[ q \in \overline{{\mathcal O}_{p}(G)} \quad  \mathrm{or} \quad
q \in
\mathrm{Int}(\mathrm{Conv}(\overline{{\mathcal O}_{p}(G)}))=\mathrm{Int}(\mathrm{Conv}({\mathcal O}_{p}(G))) . \]

Let us consider the general case. For this let \,${\mathcal S} \subset {\mathcal V}_{\sigma+1}$\,
be an (infinite) generator set of \,$G$. We denote
\[ j = \min \big\{ l \in {\mathbb N} \cup \{0\} \ ; \  \overline{{\mathcal O}_{p}(G)}
\cap \mathrm{Fix}(G_{(l)}) \neq \emptyset \big\}. \]
If \,$j=0$\, we are done. Let us suppose \,$j\geq 1$\, and recall that
\,$G_{(l)} = \langle {\mathcal S}_{(l)}, \hdots, {\mathcal S}_{(\sigma -1)} \rangle$\, for
\,$l \geq 0$\,
by Lemma \ref{lem:censer}.

Given \,$f \in {\mathcal S}_{(j-1)}$\, we define the open set
\,$\mathfrak{U}_{f}=\{ y \in {\mathbb R}^{2} \ ; \, f(y) \neq y \}$. We obtain
\[ \overline{{\mathcal O}_{p}(G)} \cap \mathrm{Fix}(G_{(j)}) \subset
\bigcup_{f \in {\mathcal S}_{(j-1)}} \hskip-7pt \mathfrak{U}_{f} \]
since \,$G_{(j-1)}=\langle {\mathcal S}_{(j-1)}, G_{(j)} \rangle$\, and
\,$\overline{{\mathcal O}_{p}(G)}
\cap \mathrm{Fix}(G_{(j-1)}) = \emptyset $\,.

By compactness of  \,$\overline{{\mathcal O}_{p}(G)} \cap \mathrm{Fix}(G_{(j)})$\,
there exists a subset \,$\{ f_{1}, \ldots, f_{a} \}$\,
of \,${\mathcal S}_{(j-1)}$\, such that
\,$\overline{{\mathcal O}_{p}(G)} \cap \mathrm{Fix}(G_{(j)}) \subset \mathfrak{U}_{f_{1}} \cup \ldots \cup \mathfrak{U}_{f_{a}}$.
If \,$\{ f_{1}, \ldots, f_{a} \}$\, is minimal among the sets sharing the previous property
we obtain
\begin{align}\label{rel:imp:fin}
 \overline{{\mathcal O}_{p}(G)} \cap \mathrm{Fix}(G_{(j)},f_{0}, \ldots,f_{a-1}) \neq \emptyset
\ \ \ \mathrm{and} \ \ \
\overline{{\mathcal O}_{p}(G)} \cap \mathrm{Fix}(G_{(j)},f_{1},\ldots,f_{a}) = \emptyset
\end{align}
where \,$f_{0}=Id$\,.
We recall that \,$\langle  G_{(j)} \,, \mathcal{T} \, \rangle$\, is a normal subgroup of \,$G$\, whenever \,$\mathcal{T}$\, is a subset of \,$G_{(j-1)}$\,.

Let \,$L$\, be a finitely generated subgroup of \,$G$.
There exists a finite subset \,$\widehat{\mathcal S}$\, of \,${\mathcal S}$\,
such that \,$L \subset \langle \widehat{\mathcal S} \,\rangle$\, and
\,$f_{1},\ldots,f_{a} \in \widehat{\mathcal S}_{(j-1)}$. We denote \,$M=\langle \widehat{\mathcal S} \, \rangle$.
There exists a central series
\[ g_{1},\ldots,g_{n}, f_{1},\ldots,f_{m} \in {\mathcal V}_{\sigma} \]
associated to \,$\widehat{\mathcal S}$\,
with \,$m \geq a$\, and
\,$\{g_{1},\hdots,g_{n}\} \subset G_{(j)}$\, by Remark \ref{rem:central}.
We define
\[  H=\langle g_{1},\hdots,g_{n}, f_{0},f_{1},\hdots,f_{a-1} \rangle  \]
It is a normal subgroup of \,$M$\, contained in
\,$G_{(j-1)}$.

Now let us fix \,$\hat p_{0} \in \overline{{\mathcal O}_{p}(G)}
\cap \mathrm{Fix}(G_{(j)},f_{0},f_{1},\hdots,f_{a-1})$.
We have that \,$\mathcal{O}_{\hat p_{0}}(M)$\, is bounded and
\,$\hat p_{0} \in \text{Fix}(H)$\,.
It follows from   \eqref{rel:imp:fin} that
\,$\overline{{\mathcal O}_{\hat p_{0}}(G)} \cap \text{Fix}(f_{a})=\emptyset$\, since
$$  \overline{{\mathcal O}_{\hat p_{0}}(M)}
\subset \overline{{\mathcal O}_{\hat p_{0}}(G)}
 \subset  \overline{{\mathcal O}_{p}(G)}
\cap \mathrm{Fix}(G_{(j)},f_{0},f_{1},\hdots,f_{a-1}) \,.$$
Applying Proposition \ref{pro:im} with \,$f=f_{a}$\, and \,$\hat p_{0} \in \text{Fix}(H)$\, we conclude that
$$0 < \epsilon_{G}(\hat p_{0}\,,f_{a})\leq \epsilon_{M}(\hat p_{0}\,,f_{a})$$
and there exists a point  \,$\hat p_{a} \in  \mathrm{Fix}(H,f_{a})$\, such that
\begin{align}\label{fin:bol:co}
  B\big(\mathrm{Conv}(\overline{{\mathcal O}_{\hat p_{a}}(M)}),\epsilon_{G}(\hat p_{0}\,,f_{a})\big)  & \subset
B\big(\mathrm{Conv}(\overline{{\mathcal O}_{\hat p_{a}}(M)}),\epsilon_{M}(\hat p_{0},f_{a})\big) \notag \\
& \subset
 \mathrm{Conv}\big(\overline{{\mathcal O}_{\hat p_{0}}(M)}\big) \subset
\mathrm{Conv}\big(\overline{{\mathcal O}_{p}(G)}\big).
\end{align}
By successive applications of Proposition \ref{pro:im} we obtain
$$\hat p_{a+1} \in \mathrm{Fix}(H,f_{a}\,,f_{a+1})\,,\ldots,\,\hat p_{m} \in \mathrm{Fix}(H,f_{a}\,,\hdots,f_{m})$$
such that
\[ \mathrm{Conv} (\overline{{\mathcal O}_{\hat p_{m}}(M)}) \subset \cdots \subset
\mathrm{Conv} (\overline{{\mathcal O}_{\hat p_{a}}(M)})\,.  \]

\noindent
The point \,$\hat p_{m}\in \text{Fix}(L)$\, belongs to
\,$\mathrm{Fix}(M) \cap \mathrm{Conv} (\overline{{\mathcal O}_{\hat p_{a}}(M)})$\,.
Let us define \,$\epsilon := \epsilon_{G}(\,\hat p_{0}\,,f_{a})$.
Then it follows from \eqref{fin:bol:co} that
\,$B(\hat p_{m},\epsilon) \subset \mathrm{Conv}(\overline{{\mathcal O}_{p}(G)})
=\overline{\mathrm{Conv}({\mathcal O}_{p}(G)})$.
Moreover we obtain \,$B(\hat p_{m},\epsilon) \subset \mathrm{Conv}({\mathcal O}_{p}(G))$\,
since the interiors of \,$\mathrm{Conv}(\overline{{\mathcal O}_{p}(G)})$\,
and \,$\mathrm{Conv}({\mathcal O}_{p}(G))$\, coincide.

Now let us consider the compact set
\[ \mathfrak{F} = \big\{ y \in \overline{\mathrm{Conv}({\mathcal O}_{p}(G)}) \ \,; \
d\big(\,y\,,\partial \,\mathrm{Conv}({\mathcal O}_{p}(G))\,\big) \geq \epsilon  \big\} . \]
By construction every finitely generated subgroup \,$L$\, of \,$G$\, has a global fixed point
in \,$\mathfrak{F}$. Hence \,$\mathrm{Fix}(G) \cap \mathfrak{F}$\, is non-empty by the
finite intersection property.
Consequently the ball \,$B(q\,,\epsilon)$\, is contained in
\,$\mathrm{Conv}({\mathcal O}_{p}(G))$\,
and then
\,$q \in \mathrm{Int}(\mathrm{Conv}({\mathcal O}_{p}(G)))$\,
for any \,$q \in \mathrm{Fix}(G) \cap \mathfrak{F}$.
\end{proof}

\vskip10pt

\begin{remark}
A goal of this paper is showing existence and localization of global
fixed points of nilpotent groups within an elementary framework.
In this spirit we try to prevent technical difficulties that could
make the paper more difficult to read.
For instance in Theorem \ref{teorema} the localization result suggests
that it suffices to require the
Lipschitz condition in a big enough neighborhood of the bounded orbit.
This is indeed the case but we prefer to avoid the extra notations and
estimates whereas we keep the main ideas.

Another example is the definition of the constants \,$\epsilon_{\sigma}$\,
and the sets \,${\mathcal V}_{\sigma}$\, of homeomorphisms.
It is possible to show that we can choose the sequence
\,$(\epsilon_{\sigma})_{\sigma \geq 0}$\, such that it converges
to $0$ geometrically by taking profit of the properties of central series.
The proof involves a tighter control of the inductive process.
Again we prefer a neat simple approach.
\end{remark}

%%%%%%%%%%%%%%%%%%%%%%%%%%%%%%%%%%%%%%%%%%%%%%%%%%%%%%%%%%%%%%%
%%%%%%%%%%%%%%%%%%%%%%%%%%%%%%%%%%%%%%%%%%%%%%%%%%%%%%%%%%%%%%%

\vglue30pt
\section{Nilpotent actions and Cartwright-Littlewood Theorem}
\vskip10pt

%%%%%%%%%%%%%%%%%%%%%%%%%%%%%%%%%%%%%%%%%%%%%%%%%%%%%%%%%%%%%%%
%%%%%%%%%%%%%%%%%%%%%%%%%%%%%%%%%%%%%%%%%%%%%%%%%%%%%%%%%%%%%%%

The next result immediately implies Theorem \ref{CLnilp} if the nilpotent group
is finitely generated.

\vskip10pt
\begin{theo}\label{CLnilpfg}
Let \,$G\subset \mathrm{Homeo}(\RR)$\, be a $\sigma$-step nilpotent group
finitely generated in \,$\mathcal{V}_{\sigma+1}$.
If \,$\mathcal{C}$\, is a  \,$G$-invariant full continuum  then
there is a global fixed point of  \,$G$\, in \,$\mathcal{C}$.
\end{theo}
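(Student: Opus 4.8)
The plan is to combine the existence statement already contained in Theorem~\ref{teo:mai} with a Brown–type localization that uses the fullness of $\mathcal{C}$. The first, essential observation is that every $p\in\mathcal{C}$ has bounded $G$-orbit: since $g(\mathcal{C})=\mathcal{C}$ for all $g\in G$ we have $\overline{\mathcal{O}_p(G)}\subset\mathcal{C}$, and $\mathcal{C}$ is compact. Thus Theorem~\ref{teo:mai} immediately yields a global fixed point $q\in\mathrm{Fix}(G)$ lying either in $\overline{\mathcal{O}_p(G)}\subset\mathcal{C}$, in which case we are done, or in $\mathrm{Int}(\mathrm{Conv}(\mathcal{O}_p(G)))$. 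Hence the entire difficulty is to rule out, or to relocate, the second alternative, in which the fixed point furnished by the Main Theorem sits in the convex hull of an orbit contained in $\mathcal{C}$ but possibly outside $\mathcal{C}$ itself.

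Rather than use Theorem~\ref{teo:mai} as a black box, I would rerun its proof, i.e.\ the induction along a central series via Proposition~\ref{pro:im}, while keeping track of $\mathcal{C}$. By Remarks~\ref{rem:loss} and~\ref{rem:central} one fixes a central series $g_1,\dots,g_n\in\mathcal{V}_\sigma$ associated to a finite generating set, so that at the $i$-th stage $H=\langle g_1,\dots,g_i\rangle$ is normal in $G$ and $f=g_{i+1}$ is central modulo $H$. The building block becomes a version of Proposition~\ref{pro:im} \emph{localized in $\mathcal{C}$}: given $q\in\mathcal{C}\cap\mathrm{Fix}(H)$ one wants $q'\in\mathcal{C}\cap\mathrm{Fix}(H,f)$. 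Applying Lemma~\ref{lem:clcv} to the bounded $f$-orbit of $q$ produces $q'\in\mathrm{Fix}(H,f)$ that is either in $\overline{\mathcal{O}_q(f)}\subset\mathcal{C}$, and then we are done, or is a capital point for $\mathcal{O}_{\tilde q}(f)$ with $\tilde q\in\overline{\mathcal{O}_q(f)}\subset\mathcal{C}$. Starting from $p_0\in\mathcal{C}$ and iterating, the terminal point $p_n$ is a global fixed point of $G$, and it lies in $\mathcal{C}$ as soon as every capital point encountered along the way does.

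Consequently the crux, and the step I expect to be the main obstacle, is the claim that a capital point $q'$ for an $f$-orbit contained in $\mathcal{C}$ must itself belong to $\mathcal{C}$. This is exactly where fullness enters, in the spirit of Brown~\cite{br05} and of Reifenberg's winding-number computations. The mechanism I have in mind is: since $\mathcal{C}$ is a full continuum, $\RR-\mathcal{C}$ is connected and, after adding the point at infinity, $\Ss-\mathcal{C}$ is an open disc, so any $q'\notin\mathcal{C}$ may be joined to $\infty$ by an arc avoiding $\mathcal{C}$. Were the closed curves $\Gamma^{f}_{\tilde q,n_k}$ defining the capital point contained in $\mathcal{C}$, such an arc would be disjoint from them and would force $\mathrm{Ind}_{q'}(\Gamma^{f}_{\tilde q,n_k})=0$, contradicting the capital condition and thereby trapping $q'$ in $\mathcal{C}$. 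The genuine difficulty is that $\Gamma^{f}_{\tilde q,n_k}$ has its vertices $f^{j}(\tilde q)\in\mathcal{C}$ but its edges are straight segments that may leave $\mathcal{C}$. Overcoming this is the technical heart of the argument: I would invoke the cone and ball estimates of Lemmas~\ref{cc:01} and~\ref{inj:los}, which confine $f([\,a\,,b\,])$ to a region controlled by $f(a)$ and $f(b)$, to arrange that the arc to infinity can be chosen disjoint from the relevant edges for $k$ large, equivalently that every point of $\RR-\mathcal{C}$ lies in the unbounded complementary component of $\Gamma^{f}_{\tilde q,n_k}$. Once this vanishing of the winding number outside $\mathcal{C}$ is secured, the capital point is confined to $\mathcal{C}$, the induction closes, and $p_n\in\mathrm{Fix}(G)\cap\mathcal{C}$.
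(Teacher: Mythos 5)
Your reduction is set up correctly: every point of $\mathcal{C}$ has bounded $G$-orbit, the first alternative of Theorem \ref{teo:mai} (or of Lemma \ref{lem:clcv} at each stage of the central series) already lands in $\mathcal{C}$, and the whole problem is concentrated in the second alternative. But the step you yourself identify as the ``technical heart'' --- that a capital point $q'$ for an orbit $\mathcal{O}_{\tilde q}(f)\subset\mathcal{C}$ must lie in $\mathcal{C}$ --- is exactly where the proof is, and the mechanism you sketch for it does not close. The obstruction is not merely that the edges of $\Gamma^{f}_{\!\!\tilde q,n_k}$ may leave $\mathcal{C}$ by a little: in the dangerous configurations $\mathcal{C}$ nearly encircles the point $q'$ (a long full continuum whose complementary ``channel'' to infinity is narrower than the displacement $\|f(y)-y\|$), and then the chords $[\,f^{j}(\tilde q),f^{j+1}(\tilde q)\,]$ can bridge the channel, so that \emph{no} arc from $q'$ to $\infty$ in $\RR-\mathcal{C}$ avoids the curve. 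Disjointness from an arc is the wrong invariant here; what one needs is a homotopical statement about $\Gamma^{f}_{\!\!\tilde q,n_k}$ in the complement of the fixed-point set, and the cone/ball estimates of Lemmas \ref{cc:01} and \ref{inj:los} are purely local and cannot supply it. Note also that for $n=1$ your claim \emph{is} (a winding-number refinement of) the Cartwright--Littlewood theorem itself, so it cannot be ``arranged'' by metric estimates alone.

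The paper's argument is organized quite differently and supplies precisely the missing global input. It inducts on $K_j=\mathrm{Fix}(g_1,\hdots,g_j)\cap\mathcal{C}$ and argues by contradiction: if $K_{j+1}=\emptyset$, let $\mathcal{W}$ be the component of $\RR-\mathrm{Fix}(G_{j+1})$ containing $\mathcal{C}$. Lemma \ref{lem:index} produces a point of $\mathrm{Fix}(G_{j+1})$ (hence outside $\mathcal{W}$) with non-zero winding number, so the curves $\Gamma^{g_{j+1}}_{\!\!p,n_k}$ are not null-homotopic in $\mathcal{W}$. On the other hand, fullness of $\mathcal{C}$ gives a closed disc $D$ with $\mathcal{C}\subset D\subset\mathcal{W}$, hence a distinguished lift $\widetilde{\mathcal{C}}$ to the universal cover of $\mathcal{W}$; the \emph{classical} Cartwright--Littlewood theorem \cite{cl01} applied to the single homeomorphism $g_{j+1}$ yields a fixed point of $g_{j+1}$ in $\mathcal{C}$, which forces the canonical lift $\tilde g_{j+1}$ (coming from the barycentric isotopy of Corollary \ref{isotopia}) to preserve $\widetilde{\mathcal{C}}$, so that $\Gamma^{g_{j+1}}_{\!\!p,n_k}$ lifts to a closed curve and is null-homotopic in $\mathcal{W}$ --- the desired contradiction. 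Two points to absorb from this: the fixed point obtained in $\mathcal{C}$ is \emph{not} claimed to be the capital point (so the statement you are trying to prove is stronger than what is needed and may not be the right intermediate claim), and the one-homeomorphism Cartwright--Littlewood theorem enters as an essential anchor for the lift, whereas it appears nowhere in your outline.
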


%%%%%%%%%%%%%%%%%%%%%%%%%%%%%%%%%%%%%%%%%%%%%%%%%%%%%%%%%%%%%%%
%%%%%%%%%%%%%%%%%%%%%%%%%%%%%%%%%%%%%%%%%%%%%%%%%%%%%%%%%%%%%%%

\begin{proof}
Let \,$\{g_{1},\ldots,g_{n}\}$\, be a central sequence associated to \,${\mathcal S}$,
where \,${\mathcal S} \subset {\mathcal V}_{\sigma +1}$\, is a finite generator set of \,$G$.
It is contained in \,${\mathcal V}_{\sigma}$\, by Remark \ref{rem:loss}.
We denote \,$G_{l}=\langle g_{1},\hdots,g_{l} \rangle$\, and \,$K_{l}=\mathrm{Fix}(G_{l}) \cap {\mathcal C}$.
Let us show that \,$K_{j} \neq \emptyset$\,
for any \,$1 \leq j \leq n$\, by induction on \,$j$.
The statement holds true for \,$j=1$\, by Cartwright-Littlewood theorem.

We will show that \,$K_{j} \neq \emptyset$\, and \,$K_{j+1} = \emptyset$\,
are incompatible.
Let \,$\mathcal{W}$\, be the connected component of
\,$\RR-\mathrm{Fix}(G_{j+1})$\, containing \,$\mathcal{C}$.
Since \,$\mathcal{C}$\, is \,$G_{j+1}$-invariant we deduce that
so is \,${\mathcal W}$.
Moreover, the set \,$K_{j}$\, is \,$g_{j+1}$-invariant by Remark \ref{inv:fix}, thus there exists a
$\omega$-recurrent point \,$p$\, for \,$g_{j+1}$\, in \,$K_{j}$.
Let \,$(n_{k})_{k \geq 1}$\,  be an increasing sequence of positive integers such that
\,$g_{j+1}^{n_{k}}(p) \to p$.
There exists \,$q_k \in \mathrm{Fix}((G_{j+1})_{(1)},g_1,\ldots,g_{j+1})$\, such that
\,$\mathrm{Ind}_{q_k}(\Gamma^{g_{j+1}}_{\!\!p,n_k}) \neq 0$\, for \,$k>>0$\,
by Lemma \ref{lem:index}.
In particular the curves \,$\Gamma^{g_{j+1}}_{\!\!p,n_k}$\, are not null-homotopic
in \,${\mathcal W}$\, for \,$k>>0$.

Let us show that there is a lift of \,$\Gamma^{g_{j+1}}_{\!\!p,n_k}$\, to the universal
covering \,$\pi : {\mathbb R}^{2} \to {\mathcal W}$\, of \,${\mathcal W}$\, that is
a closed curve for \,$k >>0$, contradicting that \,$\Gamma^{g_{j+1}}_{\!\!p,n_k}$\, is
non-null-homotopic.
Let \,$D$\, be a closed topological disc containing \,${\mathcal C}$\, and contained
in \,${\mathcal W}$. Consider a lift \,$\widetilde{D}$\, of \,$D$\, in \,${\mathbb R}^{2}$.
The set \,$\widetilde{\mathcal C}:= \widetilde{D} \cap \pi^{-1} ({\mathcal C})$\, is a lift
of \,${\mathcal C}$. Given \,$q \in {\mathcal C}$\, we denote by \,$\tilde{q}$\, the
unique point in \,$\widetilde{\mathcal C} \cap \pi^{-1}(q)$.

Consider the isotopy \,$H_{t}(x):=(1-t) x + t \, g_{j+1}(x)$\,
given by Corollary \ref{isotopia}
where \,$x \in {\mathbb R}^{2}$\, and \,$t \in [0,1]$.
Since it is an isotopy relative to \,$\mathrm{Fix}(g_{j+1})$\,
it can be restricted to an isotopy in \,${\mathcal W}$.
We consider the lift \,$\widetilde{H}_{t}: [\,0\,,\,1] \times {\mathbb R}^{2} \to {\mathbb R}^{2}$\,
such that $\widetilde{H}_{0} \equiv Id$.
Given any point \,$y \in {\mathbb R}^{2}$\, the path  \,$\widetilde{H}_{t}(y)$\, where
\,$t \in [\,0\,,1\,]$\, is a lift of the segment \,$[\,\pi(y)\,, g_{j+1}(\pi(y))\,]$\, contained in
\,${\mathcal W}$.
We denote \,$\tilde{g}_{j+1} \equiv \widetilde{H}_{1}$.
Since there exists \,$p_{0} \in \mathrm{Fix}(g_{j+1}) \cap {\mathcal C}$\,
by Cartwright-Littlewood theorem
we obtain \,$\tilde{g}_{j+1}(\tilde{p}_{0})=\tilde{p}_{0}$\, and then
\,$\tilde{g}_{j+1}(\widetilde{\mathcal C}\,)=\widetilde{\mathcal C}$.
We deduce that there exists a lift of the curve \,$\Gamma_{\!\!p}^{g_{j+1}}$\,
whose vertices belong to \,$\widetilde{\mathcal C}$.
The property \,$g_{j+1}^{n_{k}}(p) \to p$\, implies that
any lift of the curve \,$\Gamma_{\!\!p,n_{k}}^{g_{j+1}}$\,
is a closed curve for \,$k >>0$.
\end{proof}

\vskip10pt

%%%%%%%%%%%%%%%%%%%%%%%%
\vskip10pt
\subsection{Proof of Theorem \ref{CLnilp}}
%%%%%%%%%%%%%%%%%%%%%%%%

The family composed by the sets of fixed points in \,${\mathcal C}$\, of finitely
generated subgroups of \,$G$\, is a family of compact sets that has the finite intersection property
by Theorem \ref{CLnilpfg}. Therefore the intersection of all sets in the family,
i.e. \,$\mathrm{Fix}(G) \cap {\mathcal C}$, is a non-empty set.

%%%%%%%%%%%%%%%%%%%%%%%%%%%%%%%%%%%%%%%%%%%%%%%%%%%%%%%%%%%%%%%
%%%%%%%%%%%%%%%%%%%%%%%%%%%%%%%%%%%%%%%%%%%%%%%%%%%%%%%%%%%%%%%

\vglue30pt
\appendix

\section{Revisiting Bonatti's ideas}\label{ap:b}
\vglue10pt

%%%%%%%%%%%%%%%%%%%%%%%%%%%%%%%%%%%%%%%%%%%%%%%%%%%%%%%%%%%%%%%
%%%%%%%%%%%%%%%%%%%%%%%%%%%%%%%%%%%%%%%%%%%%%%%%%%%%%%%%%%%%%%%

In order to show the existence of common fixed points for \,$C^1$-diffeomorphisms of the $2$-sphere, that are pairwise commuting and
$C^{1}$-close to the identity, Bonatti studies their local properties \cite{bo01}.
In this section we adapt these results for homeomorphisms of the plane that
are \,$\epsilon$-Lipschitz with respect to the identity.
The proofs are essentially the same as in  \cite{bo01} and
they are included in the paper for the sake of clarity.

%%%%%%%%%%%%%%%%%%%%%%%%%%%%%%%%%%%%%%%%%%%%%%%%%%%%%%%%%%%%%%%
%%%%%%%%%%%%%%%%%%%%%%%%%%%%%%%%%%%%%%%%%%%%%%%%%%%%%%%%%%%%%%%

\vskip10pt
\begin{lem}\label{lema2.1}
Let  \,$\epsilon>0$\,, \,$n\in\Z^{+}$\, and \,$f:\RR\rightarrow\RR$\, with
\,$\mathrm{Lip}(f-Id)\leq \epsilon/n$\,. Consider
\,$p\,,q\in\RR$\, such that \,$\|\,q-p\,\|\leq n \, \|\,f(p)-p\,\|$. Then we have
\begin{align}\label{lema:2.1}
 \|(f-Id)(q)-(f-Id)(p)\| \leq \epsilon \, \|f(p)-p\| \,.
\end{align}
In particular if \,$0<\epsilon < 1$\,  and  \,$f(p)\neq p$\, then
\begin{itemize}
\item[$(i)$]
\,$f$\, has no fixed points in the closed ball
\,$B\big[\,p\,, n\,\|f(p)-p\|\,\big]\,;$

\item[$(ii)$]
The angle \,$\mathrm{Ang}(v_{1},v_{2})$\, enclosed by the vectors
\begin{align}\label{va:pe}
 v_{1}=\frac{f(z_{1})-z_{1}}{\|f(z_{1})-z_{1}\|} \quad  \text{and} \quad
v_{2}=\frac{f(z_{2})-z_{2}}{\|f(z_{2})-z_{2}\|}     \end{align}
is well-defined if \,$z_{1}\,,z_{2}\in B\big[\,p\,, n\,\|f(p)-p\|\,\big]$\, and
\,$0\leq \mathrm{Ang}(v_{1},v_{2})\leq 2 \arcsin(\epsilon)$.
\end{itemize}

\end{lem}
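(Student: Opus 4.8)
The plan is to derive the entire lemma from the single displayed inequality \eqref{lema:2.1}, which itself is nothing more than an unwinding of the Lipschitz hypothesis. First I would prove \eqref{lema:2.1}: since $f-Id$ is $(\epsilon/n)$-Lipschitz and $\|q-p\|\leq n\|f(p)-p\|$,
\[ \|(f-Id)(q)-(f-Id)(p)\| \leq \mathrm{Lip}(f-Id)\,\|q-p\| \leq \tfrac{\epsilon}{n}\cdot n\|f(p)-p\| = \epsilon\,\|f(p)-p\|. \]
Both particular statements are then consequences of this estimate together with $0<\epsilon<1$ and $f(p)\neq p$.

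For item $(i)$ I would argue by contradiction. If some $z\in B[\,p\,,n\|f(p)-p\|\,]$ satisfies $f(z)=z$, then $(f-Id)(z)=0$, so applying \eqref{lema:2.1} with $q=z$ gives $\|f(p)-p\|=\|(f-Id)(p)\|\leq \epsilon\|f(p)-p\|$. As $f(p)\neq p$ forces $\|f(p)-p\|>0$, this yields $1\leq\epsilon$, contradicting $\epsilon<1$; hence $f$ has no fixed point in the ball.

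For item $(ii)$, the well-definedness of $v_{1}$ and $v_{2}$ is immediate from $(i)$: the points $z_{1},z_{2}$ lie in $B[\,p\,,n\|f(p)-p\|\,]$, so $f(z_{i})\neq z_{i}$ and the normalized vectors exist. To control the angle I would set $w_{i}=(f-Id)(z_{i})$ and $w_{p}=(f-Id)(p)$; by \eqref{lema:2.1} each $w_{i}$ lies in the closed ball of center $w_{p}$ and radius $\epsilon\|w_{p}\|$. The key geometric observation is that, viewed from the origin (which lies outside this ball since $\|w_{p}\|>\epsilon\|w_{p}\|$), the largest angle subtended by a point of the ball relative to the direction of $w_{p}$ is attained along a line tangent to the sphere bounding the ball; the right triangle formed by the origin, the center $w_{p}$, and the tangency point gives $\sin\big(\mathrm{Ang}(w_{i},w_{p})\big)\leq \epsilon\|w_{p}\|/\|w_{p}\|=\epsilon$, so $\mathrm{Ang}(w_{i},w_{p})\leq\arcsin(\epsilon)$. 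Finally the triangle inequality for angles yields
\[ \mathrm{Ang}(v_{1},v_{2}) = \mathrm{Ang}(w_{1},w_{2}) \leq \mathrm{Ang}(w_{1},w_{p})+\mathrm{Ang}(w_{p},w_{2}) \leq 2\arcsin(\epsilon). \]

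The one delicate point is this last angular estimate, and specifically the sharp constant. A naive bound using only the perpendicular component of $w_{i}-w_{p}$ together with $\|w_{i}\|\geq(1-\epsilon)\|w_{p}\|$ gives $\sin\big(\mathrm{Ang}(w_{i},w_{p})\big)\leq \epsilon/(1-\epsilon)$, which is too weak; obtaining the clean bound $\arcsin(\epsilon)$ genuinely requires the tangent-cone argument above. Everything else reduces to routine applications of the Lipschitz hypothesis.
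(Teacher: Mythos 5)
Your proposal is correct and follows essentially the same route as the paper: inequality \eqref{lema:2.1} directly from the Lipschitz hypothesis, item $(i)$ by the same contradiction, and item $(ii)$ by the angle triangle inequality through the direction of $f(p)-p$ together with the bound $\sin\big(\mathrm{Ang}(v_{i},\frac{f(p)-p}{\|f(p)-p\|})\big)\leq \|f(z_{i})-z_{i}-(f(p)-p)\|/\|f(p)-p\|\leq\epsilon$. Your tangent-cone picture is just a geometric restatement of that last sine inequality, which the paper writes down directly.
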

\vskip5pt

%%%%%%%%%%%%%%%%%%%%%%%%%%%%%%%%%%%%%%%%%%%%%%%%%%%%%%%%%%%%%%%
%%%%%%%%%%%%%%%%%%%%%%%%%%%%%%%%%%%%%%%%%%%%%%%%%%%%%%%%%%%%%%%

\begin{proof}
Let  \,$p\,,q\in\RR$\, with  \,$\|q-p\|\leq n \, \|f(p)-p\|$.
The Lipschitz property implies
\[ \|\,(f-Id)(q)-(f-Id)(p)\,\| \leq  \frac{\epsilon}{n} \, \|\,q-p\,\|
\leq \frac{\epsilon}{n}\, n \, \|f(p)-p\|= \epsilon \, \|f(p)-p\| \]
proving the inequality \eqref{lema:2.1}.

Suppose \,$0<\epsilon < 1$\, and \,$f(p)\neq p$\,.
If \,$f(q)=q$\, for some \,$q\in B\big[\,p\,, n\,\|f(p)-p\|\,\big]$\, then
$$\|f(p)-p\|=\|\,(f-Id)(q)-(f-Id)(p)\,\|\leq \epsilon \,\|f(p)-p\|$$
contradicting the condition \,$0<\epsilon < 1$. This completes the proof of
item $(i)$.

Let us show item $(ii)$. The angle enclosed by the vectors \,$v_{1}$\, and \,$v_{2}$\,
is well-defined by item $(i)$. We have
$$\mathrm{Ang}(v_{1}\,,v_{2})\leq \mathrm{Ang}\Big(v_{1}\,,\frac{f(p)-p}{\|f(p)-p\|}\Big)+
\mathrm{Ang}\Big(v_{2}\,,\frac{f(p)-p}{\|f(p)-p\|}\Big).$$

\noindent
Moreover equation \eqref{lema:2.1} implies
\begin{align}\label{}
0\leq \sin\bigg\{\mathrm{Ang}\Big(v_{i}\,,\frac{f(p)-p}{\|f(p)-p\|}\Big)\bigg\}\leq
\frac{\|f(z_{i})-z_{i}-\big(f(p)-p\big)\|}{\|f(p)-p\|} \leq\epsilon \,.    \notag
\end{align}
Thus we obtain \,$0\leq \mathrm{Ang}(v_{1},v_{2})\leq 2 \arcsin(\epsilon)$, completing the proof of
item $(ii)$.
\end{proof}
\vskip5pt

%%%%%%%%%%%%%%%%%%%%%%%%%%%%%%%%%%%%%%%%%%%%%%%%%%%%%%%%%%%%%%%
%%%%%%%%%%%%%%%%%%%%%%%%%%%%%%%%%%%%%%%%%%%%%%%%%%%%%%%%%%%%%%%

\vskip10pt
\begin{cor}\label{co:dpi}
Let \,$f:\RR\rightarrow\RR$\, with \,$\mathrm{Lip}(f-Id)\leq \frac{1}{8}$\, and
\,$p\in\RR-\mathrm{Fix}(f)$\,. Then
\begin{itemize}
\item[$(i)$]
$f$\, is a homeomorphism$\,;$

\item[$(ii)$]
$ \|(f-Id)(q)-(f-Id)(p)\| \leq \frac{1}{2} \, \|f(p)-p\| $\,
for all
\,$p\,,q \in B[\,p\,,4\|f(p)-p\|\,]\,;$

\item[$(iii)$]
$f$\, has no fixed points in the closed ball \,$B[\,p\,,4\|f(p)-p\|\,]\,;$

\item[$(iv)$]
$0\leq \mathrm{Ang}(v_{1},v_{2})\leq\pi/3$\, where \,$v_{1}\,,v_{2}$\, are defined in \eqref{va:pe}
and \,$z_{1},z_{2}$\, belong to the closed ball \,$B[\,p\,,4\|f(p)-p\|\, ]$.
\end{itemize}
\end{cor}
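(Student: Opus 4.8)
The plan is to obtain all four items directly from Lemma \ref{lema2.1} by specializing its parameters. The crucial observation is that the hypothesis $\mathrm{Lip}(f-Id)\leq 1/8$ can be rewritten as $\mathrm{Lip}(f-Id)\leq \epsilon/n$ with the choices $n=4$ and $\epsilon=1/2$, since then $\epsilon/n = 1/8$. With these values the closed ball $B[\,p\,,4\|f(p)-p\|\,]$ appearing in the statement is precisely the ball $B\big[\,p\,,n\,\|f(p)-p\|\,\big]$ of Lemma \ref{lema2.1}, and any point $q$ in it satisfies $\|q-p\|\leq n\,\|f(p)-p\|$.

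First I would dispose of item $(i)$ separately: since $\mathrm{Lip}(f-Id)\leq 1/8 < 1$, Lemma \ref{elondifeo} guarantees at once that $f$ is a homeomorphism of $\RR$, independently of the remaining parameter bookkeeping.

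For the three remaining items I would invoke Lemma \ref{lema2.1} with $n=4$ and $\epsilon=1/2$, which is legitimate because $f(p)\neq p$ and $0<\epsilon<1$. Item $(ii)$ is then simply inequality \eqref{lema:2.1} evaluated at these parameters, yielding $\|(f-Id)(q)-(f-Id)(p)\|\leq \epsilon\,\|f(p)-p\| = \tfrac{1}{2}\|f(p)-p\|$ for every $q\in B[\,p\,,4\|f(p)-p\|\,]$. Item $(iii)$ is verbatim item $(i)$ of Lemma \ref{lema2.1}, asserting the absence of fixed points of $f$ in the same ball. Finally, item $(iv)$ follows from item $(ii)$ of Lemma \ref{lema2.1}, which bounds the angle by $2\arcsin(\epsilon)=2\arcsin(1/2)=\pi/3$.

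I do not expect any genuine obstacle here, since the entire content is already packaged in Lemma \ref{lema2.1}; the proof reduces to the arithmetic check that $\epsilon=1/2$ and $n=4$ give $\epsilon/n=1/8$ together with the elementary identity $\arcsin(1/2)=\pi/6$. The only point worth stating explicitly is that these particular numerical choices are exactly the ones making the threshold $1/8$, the constant defining the set $\mathcal{U}$, fit the form $\epsilon/n$ required by the lemma, which is precisely why the ball of radius $4\|f(p)-p\|$ is the natural region in which to control $f$.
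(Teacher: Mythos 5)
Your proposal is correct and coincides exactly with the paper's own proof: item $(i)$ is deduced from Lemma \ref{elondifeo}, and items $(ii)$--$(iv)$ are obtained by applying Lemma \ref{lema2.1} with $\epsilon=1/2$ and $n=4$, including the computation $2\arcsin(1/2)=\pi/3$.
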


\vskip5pt

%%%%%%%%%%%%%%%%%%%%%%%%%%%%%%%%%%%%%%%%%%%%%%%%%%%%%%%%%%%%%%%
%%%%%%%%%%%%%%%%%%%%%%%%%%%%%%%%%%%%%%%%%%%%%%%%%%%%%%%%%%%%%%%

\begin{proof}
The proof of items $(ii)$, $(iii)$ e $(iv)$ is obtained by considering
\,$\epsilon=1/2$\, and \,$n=4$\, in Lemma \ref{lema2.1}.
Item $(i)$ is an immediate consequence of Lemma \ref{elondifeo}.
\end{proof}

\vskip5pt

The next lemma is used in the proof of Lemma \ref{lem:fdertog}.

%%%%%%%%%%%%%%%%%%%%%%%%%%%%%%%%%%%%%%%%%%%%%%%%%%%%%%%%%%%%%%%
%%%%%%%%%%%%%%%%%%%%%%%%%%%%%%%%%%%%%%%%%%%%%%%%%%%%%%%%%%%%%%%

\vskip10pt
\begin{lem}\label{le:cs}
Let \,$f\in \mathrm{Homeo}(\RR)$\, with \,$\mathrm{Lip}(f-Id)\leq \frac{1}{8}$\,.
Suppose that \,$p\in\RR-\mathrm{Fix}(f)$\, is an \,$\omega$-recurrent point for \,$f$.
Then \,$\Gamma^{f}_{\!\!p}$\, is not a simple curve.
\end{lem}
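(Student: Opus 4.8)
The plan is to argue by contradiction, assuming $\Gamma^f_p$ is injective, and to produce a self-intersection from a rigidity of the curve near $p$ combined with the recurrence hypothesis. First I would dispose of the degenerate case: if $p$ is periodic then $\Gamma^f_p$ retraces a fixed non-degenerate closed polygon (non-degenerate since $p\notin\mathrm{Fix}(f)$) periodically in the parameter, hence is visibly non-injective. So I assume henceforth that the points $f^i(p)$, $i\in\Z$, are pairwise distinct, and I fix an increasing sequence $(n_k)$ with $f^{n_k}(p)\to p$.

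Next I would extract the local rigidity near $p$ from Corollary \ref{co:dpi}. Writing $r=\|f(p)-p\|>0$, $u=(f(p)-p)/r$ and $\phi(x)=\langle x-p,u\rangle$, Corollary \ref{co:dpi} applied at $p$ gives, for every $z\in B[p,4r]$, that $f(z)-z$ makes an angle at most $\pi/6$ with $u$ and has norm in $[r/2,3r/2]$; hence $\phi$ strictly increases along every edge $[f^i(p),f^{i+1}(p)]$ whose basepoint lies in $B[p,4r]$, with increment at least $\tfrac{\sqrt3}{4}r$. Since $\mathrm{Lip}(f^{-1}-Id)\le 1/7$ by Lemma \ref{comutpropr}$(ii)$, one checks $f^{-1}(p)\in B[p,4r]$, so the edges $[f^{-1}(p),p]$ and $[p,f(p)]$ cross the line $\ell=\phi^{-1}(0)$ transversally and upward at $p$, passing from $\{\phi<0\}$ into $\{\phi>0\}$. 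By continuity of $f^{\pm1}$ the triple $f^{n_k-1}(p),f^{n_k}(p),f^{n_k+1}(p)$ converges to $f^{-1}(p),p,f(p)$, so for large $k$ these lie in $B[p,4r]$ as well and the two edges around the vertex $v_k:=f^{n_k}(p)$ form a second upward, nearly coincident crossing of $\ell$ near $p$.

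The core of the argument is global and topological. The monotonicity of $\phi$ inside $B[p,4r]$ forbids the orbit from returning near $p$ without leaving that ball: after time $0$ it exits into $\{\phi>0\}$, and shortly before time $n_k$ it re-enters from $\{\phi<0\}$. Hence, inside a small disc $D=B[p,\eta]$, the curve presents two disjoint, nearly parallel strands transverse to $\ell$ — one through $p$, one through $v_k$ — while the long exterior arc $G_k$ of $\Gamma^f_p$ joins the top of the first strand to the bottom of the second. I would close the first strand and $G_k$ (together with a short sub-arc of $\partial D$) into a Jordan curve and apply the Jordan curve theorem: the recurrence $v_k\to p$ forces $v_k$ strictly inside one complementary region, while the edge of $\Gamma^f_p$ leaving $v_k$ toward $f^{n_k+1}(p)\to f(p)$ is pushed toward the other region; since this edge may meet neither $G_k$ nor $\partial D$, it must meet the first strand, yielding a self-intersection of $\Gamma^f_p$ and the desired contradiction.

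The hard part is precisely this linking step, because proximity of the two strands does not by itself force a crossing — two parallel segments stay disjoint — so the intersection is genuinely produced by the global return and must be obtained through a correct use of the Jordan curve theorem (equivalently, a winding-number computation for $\Gamma^f_{p,n_k}$). A second technical point is ruling out extraneous edges of $\Gamma^f_p$ that might thread the small disc $D$ and spoil the two-strand picture; I would control this by choosing the $n_k$ among the record-minimal return times, so that $\|f^i(p)-p\|>\|f^{n_k}(p)-p\|$ for $0<i<n_k$, and by using the $\phi$-monotonicity in $B[p,4r]$ to bound how many consecutive orbit points can sit inside $D$, thereby isolating the two returning strands as the only relevant pieces of the curve near $p$.
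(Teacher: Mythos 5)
Your overall strategy is the same as the paper's: assume $\Gamma^f_{\!\!p}$ is simple, use Corollary \ref{co:dpi} to show that the edges of the curve cross a fixed transversal near $p$ coherently, and then derive a contradiction with recurrence by a Jordan-curve trapping argument. However, the decisive step is exactly the one you flag as ``the hard part'' and do not carry out, and your sketch of it does not work as stated. You never determine on which side of your proposed Jordan curve the points $v_k=f^{n_k}(p)$ and $f(v_k)$ lie: the assertion that ``the recurrence $v_k\to p$ forces $v_k$ strictly inside one complementary region'' is not an argument, because $p$ itself lies \emph{on} the curve (it is on the first strand), so proximity of $v_k$ to $p$ says nothing about which side $v_k$ falls on. The side is determined precisely by the orientation with which the returning strand crosses the transversal, and extracting the contradiction from that coherence of orientations is the entire content of the lemma. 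A second genuine defect is your treatment of extraneous edges: record-minimal return times control the distances $\|f^i(p)-p\|$ of the \emph{vertices}, not of the \emph{edges} $[\,f^i(p)\,,f^{i+1}(p)\,]$, and an edge joining two distant vertices with a long displacement can still thread your small disc $D$; the $\phi$-monotonicity you establish only applies to edges whose basepoint lies in $B[\,p\,,4r\,]$, so it does not exclude these.

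For comparison, the paper resolves both points at once with a slightly different set-up. It takes $\sigma$ to be a short segment perpendicular to $[\,p\,,f(p)\,]$ at its midpoint. If $[\,f^{j}(p)\,,f^{j+1}(p)\,]$ meets $\sigma$, the triangle inequality gives $\|f^{j}(p)-p\|\leq 3\max\{\|f(p)-p\|\,,\|f^{j+1}(p)-f^{j}(p)\|\}$, so Corollary \ref{co:dpi}$(iv)$ applies centered either at $p$ or at $f^{j}(p)$ and bounds the angle between the two edges by $\pi/3$; hence \emph{every} edge meeting $\sigma$, near or far, crosses it with the same orientation. Taking the first positive $i$ with $[\,f^{i}(p)\,,f^{i+1}(p)\,)\cap\sigma\neq\emptyset$, the Jordan curve $\beta$ formed by the arc of $\Gamma^f_{\!\!p}$ from $a\in[\,p\,,f(p)\,]\cap\sigma$ to $b\in[\,f^{i}(p)\,,f^{i+1}(p)\,)\cap\sigma$ together with $[\,b\,,a\,]\subset\sigma$ separates $p$ from $f^{i+1}(p)$ \emph{because} the two crossings are coherently oriented; recurrence then yields $q=f^{j}(p)$, $j>i+1$, with $q$ inside the disc bounded by $\beta$ and $f(q)$ outside, the segment $[\,q\,,f(q)\,]$ can only escape through $[\,a\,,b\,]$ by simplicity of $\Gamma^f_{\!\!p}$, and the forced orientation of that crossing places $q$ on the wrong side --- the contradiction. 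You would need to supply an argument of exactly this kind to close your gap.
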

%\vskip5pt

%%%%%%%%%%%%%%%%%%%%%%%%%%%%%%%%%%%%%%%%%%%%%%%%%%%%%%%%%%%%%%%
%%%%%%%%%%%%%%%%%%%%%%%%%%%%%%%%%%%%%%%%%%%%%%%%%%%%%%%%%%%%%%%

\begin{proof}
Suppose that \,$\Gamma^{f}_{\!\!p}$\, is simple.
Let \,$\sigma$\, be a line segment, intersecting
\,$[\,p\,,f(p)\,]$\, perpendicularly in their common midpoint.
We also suppose that the length of \,$\sigma$\, is less or equal than \,$\|f(p)-p\|$.
Since \,$p$\, is $\omega$-recurrent the curve \,$\Gamma^{f}_{\!\!p}$\,
intersects \,$\sigma$\, infinitely many times.

Let \,$j \in\Z^{+}$\, such that
\,$\big[\,f^{j}(p)\,,f^{j+1}(p)\big] \cap \sigma \neq \emptyset$. We have
\[ \|f^{j}(p)-p\|  \leq \| f^{j+1}(p)-f^{j}(p)\| + 2 \|f(p)-p\|
\leq 3 \max\big\{ \|f(p)-p\|\,, \| f^{j+1}(p)-f^{j}(p)\| \big\}. \]
Hence either \,$f^{j}(p)\in B[\,p\,,4\,\|f(p)-p\| \,]$\, or
\,$p\in B[\,f^{j}(p)\,, 4\,\|f^{j+1}(p)-f^{j}(p)\|\,]$. Anyway, the angle
defined by the segments \,$[\,p\,,f(p)\,]$\, and \,$[\,f^{j}(p)\,,f^{j+1}(p)\,]$\, is less
or equal than \,$\pi / 3$\, by item $(iv)$ in Corollary \ref{co:dpi}.
Therefore these segments intersect \,$\sigma$\, with the same orientation.

Let \,$i\in\Z^{+}$\, be the first positive integer such that
\,$\big[\,f^{i}(p)\,,f^{i+1}(p)\big) \cap \sigma \neq \emptyset$.
Consider the simple closed curve \,$\beta$\, obtained by juxtaposing the segments
$$[\,a\,,f(p)\,]\ , \ [\,f(p)\,,f^{2}(p)\,] \ , \ \ldots \ , [\,f^{i-1}(p)\,,f^{i}(p)\,] \ , \
[\,f^{i}(p)\,,b\,] \ , \ [\,b\,,a\,]$$
where \,$\{a\} = [\,p\,,f(p)\,] \cap \sigma$\, and \,$\{b\} = \big[\,f^{i}(p)\,,f^{i+1}(p)\,\big) \cap \sigma$.
Since the segments \,$[\,p\,,f(p)\,]$\, and \,$\big[\,f^{i}(p)\,,f^{i+1}(p)\,\big)$\, intersect
\,$\sigma$\, with the same orientation then \,$\beta$\, separates the points \,$p$\, and
\,$f^{i+1}(p)$.

Let \,$\D$\, be the closure of the connected component of \,$\RR - \beta$\,
containing
\,$f^{i+1}(p)$.
Since \,$p$\, and \,$f^{i+1}(p)$\, belong to the $\omega$-limit of \,${\mathcal O}_{p}(f)$\,
there exists \,$q =f^{j}(p)$\, for some \,$j > i+1$\, such that \,$q \in \mathrm{Int}(\D)$\,
and \,$f(q)\notin \mathrm{Int}(\D)$\,.
The intersection of \,$[\,q\,,f(q)\,]$\, with \,$\beta$\, is contained in the segment \,$[\,a\,,b\,]$.
Since \,$[\,p\,,f(p)\,]$\, and \,$[\,q\,,f(q)\,]$\, intersect \,$\sigma$\, with the same orientation we
deduce \,$q$\, does not belong to \,$\mathrm{Int}(\D)$, obtaining a contradiction.
\end{proof}

\vglue5pt

Let \,$p\in\RR-\text{Fix}(f)$\, be an \,$\omega$-recurrent point for \,$f$.
There exists a simple closed curve \,$\gamma\subset \Gamma^{f}_{\!\!p}$\,
by Lemma \ref{le:cs}.
The vertices of \,$\gamma$\, are intersections of line segments of the form
\,$[\,f^{n}(p)\,,f^{n+1}(p)\,]$\, and \,$[\,f^{m}(p)\,,f^{m+1}(p)\,]$\,.
The angle described by two such intersecting segments is less or equal
than \,$\pi/3$\, by Corollary \ref{co:dpi}.
Let \,$\D$\, be the disc bounded by \,$\gamma$. Consider the vector field
defined by \,$X(x)=f(x)-x$\, for \,$x\in\RR$.
Since the singularities of \,$X$\, are the fixed points of \,$f$\, the vector field \,$X$\,
has no singular points in \,$\gamma$\, by Corollary \ref{co:dpi}.
Moreover, the angle described by \,$X(x)$\, and any segment of \,$\gamma$\, through \,$x$\,
is less or equal than \,$\pi/3$.
Therefore the index of the singularities of \,$X$\, in \,$\mathcal{D}$\, is equal to \,$1$.
We obtain the following lemma\,:

%%%%%%%%%%%%%%%%%%%%%%%%%%%%%%%%%%%%%%%%%%%%%%%%%%%%%%%%%%%%%%%
%%%%%%%%%%%%%%%%%%%%%%%%%%%%%%%%%%%%%%%%%%%%%%%%%%%%%%%%%%%%%%%

\vskip10pt
\begin{lem}\label{lem:ind:1}
Let \,$f \in \mathrm{Homeo}(\RR)$\, with \,$\mathrm{Lip}(f-Id)\leq \frac{1}{8}$\,.
Consider a $\omega$-recurrent point \,$p\in\RR-\mathrm{Fix}(f)$\, for \,$f$.
Then there exists a simple closed curve \,$\gamma\subset \Gamma^{f}_{\!\!p}$,
contained in \,$\RR - \mathrm{Fix}(f)$\,, such that the compact set of
fixed points of \,$f$\, in the interior of the disc bounded by \,$\gamma$\, has index \,$1$\, for
\,$f$.
\end{lem}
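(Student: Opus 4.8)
The plan is to express the index as a winding number and to evaluate it via the theorem of turning tangents. First I would apply Lemma \ref{le:cs}: since \,$p \in \RR - \mathrm{Fix}(f)$\, is \,$\omega$-recurrent, the curve \,$\Gamma^{f}_{\!\!p}$\, fails to be simple, so it contains a simple closed subcurve \,$\gamma$. The edges of \,$\gamma$\, are subsegments of the segments \,$[\,f^{i}(p)\,,f^{i+1}(p)\,]$\, and its vertices are among the points \,$f^{i}(p)$\, and the self-intersections of \,$\Gamma^{f}_{\!\!p}$. I orient \,$\gamma$\, positively and let \,$\D$\, be the closed disc it bounds. Introducing the vector field \,$X(x)=f(x)-x$, whose zero set is exactly \,$\mathrm{Fix}(f)$, the Poincar\'e--Hopf principle reduces the lemma to showing that \,$X$\, does not vanish on \,$\gamma$\, and that the winding number of \,$X/\|X\|$\, along \,$\gamma$\, equals \,$1$. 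Non-vanishing is immediate from item $(iii)$ of Corollary \ref{co:dpi}, because every \,$x \in \gamma$\, lies on some \,$[\,f^{i}(p)\,,f^{i+1}(p)\,] \subset B[\,f^{i}(p)\,,4\|f^{i+1}(p)-f^{i}(p)\|\,]$, a ball free of fixed points.

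The decisive step is an angle estimate. At a point \,$x$\, of the edge \,$[\,f^{i}(p)\,,f^{i+1}(p)\,]$\, I would compare \,$X(x)$\, with the edge direction \,$X(f^{i}(p))=f^{i+1}(p)-f^{i}(p)$: since both \,$x$\, and \,$f^{i}(p)$\, lie in the ball \,$B[\,f^{i}(p)\,,4\|f^{i+1}(p)-f^{i}(p)\|\,]$, item $(iv)$ of Corollary \ref{co:dpi} bounds this angle by \,$\pi/3$. The same corollary, applied at a vertex where two edges cross, bounds the angle between the two edge directions by \,$\pi/3$; hence the exterior turning angle of \,$\gamma$\, at each vertex is at most \,$\pi/3$\, in absolute value. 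Thus along \,$\gamma$\, the continuous field \,$X$\, never deviates from the local (piecewise-constant) edge direction by more than \,$\pi/3$.

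Finally I would compare the rotation of \,$X$\, with the turning of the tangent of \,$\gamma$. As \,$\gamma$\, is a positively oriented simple closed polygon, the theorem of turning tangents gives total turning \,$2\pi$, i.e. turning number \,$1$. Because \,$X$\, stays within angle \,$\pi/3 < \pi/2$\, of the edge direction everywhere and the vertex turnings are likewise bounded by \,$\pi/3$, the angular difference between \,$X/\|X\|$\, and the tangent direction can never accumulate a full revolution; therefore the winding number of \,$X$\, along \,$\gamma$\, coincides with the turning number and equals \,$1$. This yields that the index of \,$\mathrm{Fix}(f)\cap \mathrm{Int}(\D)$\, for \,$f$\, is \,$1$.

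The hard part will be the last comparison: making rigorous that the rotation of \,$X$\, matches the turning number despite the corners of \,$\gamma$. The delicate bookkeeping is across the vertices, where the tangent direction jumps by a controlled amount while \,$X$\, varies continuously, and one must check that the \,$\pi/3$\, deviation together with the \,$\pi/3$\, vertex turnings precludes any spurious \,$\pm 2\pi$\, contribution. A secondary technical point is to verify the hypothesis of item $(iv)$ of Corollary \ref{co:dpi} at a crossing, namely that one of the two base points lies in the \,$4\|f(\cdot)-\cdot\|$-ball of the other; this follows from the same triangle-inequality estimate used in the proof of Lemma \ref{le:cs}.
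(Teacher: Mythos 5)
Your proposal is correct and follows essentially the same route as the paper: extract a simple closed curve $\gamma\subset\Gamma^{f}_{\!\!p}$ via Lemma \ref{le:cs}, introduce the vector field $X(x)=f(x)-x$, use Corollary \ref{co:dpi} to show $X$ is non-vanishing on $\gamma$ and stays within angle $\pi/3$ of the edge directions (with the triangle-inequality check at crossings), and conclude that the index equals the turning number $1$. The paper states this last comparison without elaboration, so your additional care about the corners and the orientation bookkeeping only makes explicit what the paper leaves implicit.
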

%\vskip5pt

%%%%%%%%%%%%%%%%%%%%%%%%%%%%%%%%%%%%%%%%%%%%%%%%%%%%%%%%%%%%%%%
%%%%%%%%%%%%%%%%%%%%%%%%%%%%%%%%%%%%%%%%%%%%%%%%%%%%%%%%%%%%%%%

Analogously the existence of an increasing sequence of positive integers \,$(n_{k})_{k\geq1}$\,
such that \,$f^{n_{k}}(p) \rightarrow p$\, for some \,$p\in\RR-\mathrm{Fix}(f)$\,
implies that a simple closed curve  \,$\gamma_{k}\subset \Gamma^{f}_{p,n_{k}}$\,
behaves similarly as the curve \,$\gamma$\, in Lemma \ref{lem:ind:1} if \,$k >>0$.
The reason is that the segment \,$[\,f^{n_{k}}(p)\,,f(p)\,]$\, that is contained in
\,$\Gamma^{f}_{\!\!p,n_{k}}$\, but it is not necessarily contained in \,$\Gamma^{f}_{\!\!p}$\, has analogous
properties as the segment \,$[\,p\,,f(p)\,]$\, when \,$k >>0$.
Then we have the following lemma.

%%%%%%%%%%%%%%%%%%%%%%%%%%%%%%%%%%%%%%%%%%%%%%%%%%%%%%%%%%%%%%%
%%%%%%%%%%%%%%%%%%%%%%%%%%%%%%%%%%%%%%%%%%%%%%%%%%%%%%%%%%%%%%%

\vskip10pt
\begin{lem}\label{lem:ind:2}
Let \,$f \in \mathrm{Homeo}(\RR)$\, with \,$\mathrm{Lip}(f-Id)\leq \frac{1}{8}$\,.
Suppose that there exist \,$p\in\RR-\mathrm{Fix}(f)$\,
and a increasing sequence of positive integer numbers \,$(n_{k})_{k\geq1}$\, such that
\,$f^{n_{k}}(p) \rightarrow p$\,.
Suppose further that \,$\gamma_{k}\subset \Gamma^{f}_{\!\!p,n_{k}}$\,
is a simple closed curve.
Then $\gamma_{k}$ has no fixed points of $f$ and
the compact set of fixed points of \,$f$\,
in the interior of the disc bounded by \,$\gamma_{k}$\,
has index $1$ for $f$ when $k>>0$.
\end{lem}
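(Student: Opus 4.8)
The plan is to mimic the argument that precedes Lemma~\ref{lem:ind:1}, reducing the statement to the two local facts that make that argument run — absence of fixed points along the constituent segments of $\gamma_{k}$, and an angle bound between the displacement field $X(x)=f(x)-x$ and those segments — and then to check that both facts survive the presence of the single ``extra'' segment $[\,f^{n_{k}}(p)\,,f(p)\,]$ once $k$ is large.

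First I would isolate the index tool. The singularities of $X$ are exactly the points of $\mathrm{Fix}(f)$. As in the discussion before Lemma~\ref{lem:ind:1}, once we know that $(a)$ $\gamma_{k}\cap\mathrm{Fix}(f)=\emptyset$ and $(b)$ at every $x\in\gamma_{k}$ the vector $X(x)$ makes an angle strictly less than $\pi/2$ with the segment of $\gamma_{k}$ through $x$, the winding number of $X$ along $\gamma_{k}$ coincides with the turning number $\pm1$ of the simple closed polygon $\gamma_{k}$; hence the index of $\mathrm{Fix}(f)\cap\mathrm{Int}(\mathcal{D}_{k})$ equals $1$. Condition $(b)$ is what guarantees that a continuous branch of the angle between $X$ and the tangent is defined along each segment and that the exterior angle at each vertex of $\gamma_{k}$ lies in $(-\pi,\pi)$, so that a telescoping of exterior angles identifies the two windings. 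Thus it suffices to establish $(a)$ and $(b)$ for $k\gg0$.

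Next I would dispose of the orbit segments, which require no largeness of $k$. Every segment of $\Gamma^{f}_{\!\!p,n_{k}}$ other than $[\,f^{n_{k}}(p)\,,f(p)\,]$ has the form $[\,f^{j}(p)\,,f^{j+1}(p)\,]$ and hence lies in $B[\,f^{j}(p)\,,4\|f^{j+1}(p)-f^{j}(p)\|\,]$, so it meets no fixed point of $f$ by item $(iii)$ of Corollary~\ref{co:dpi}; moreover, taking $z_{1}=f^{j}(p)$ and $z_{2}=x$ in item $(iv)$ of the same corollary bounds the angle between $X(x)$ and the direction $\frac{f^{j+1}(p)-f^{j}(p)}{\|f^{j+1}(p)-f^{j}(p)\|}$ of this segment by $\pi/3$. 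This is exactly the verification carried out for $\Gamma^{f}_{\!\!p}$ before Lemma~\ref{lem:ind:1}.

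Finally I would treat the closing segment, the only genuinely new ingredient. Since $f^{n_{k}}(p)\to p$, for $k\gg0$ we have $\|f^{n_{k}}(p)-p\|<\|f(p)-p\|$, so every point of $[\,f^{n_{k}}(p)\,,f(p)\,]$ lies in $B[\,p\,,4\|f(p)-p\|\,]$; item $(iii)$ of Corollary~\ref{co:dpi} then yields $(a)$ on this segment. For $(b)$, observe that the direction $\frac{f(p)-f^{n_{k}}(p)}{\|f(p)-f^{n_{k}}(p)\|}$ of the closing segment converges to $\frac{f(p)-p}{\|f(p)-p\|}=X(p)/\|X(p)\|$; applying item $(iv)$ of Corollary~\ref{co:dpi} with $z_{1}=p$ and $z_{2}=x$ bounds the angle between $X(x)$ and $X(p)$ by $\pi/3$, so for $k\gg0$ the angle between $X(x)$ and the closing segment is at most $\pi/3+o(1)<\pi/2$. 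With $(a)$ and $(b)$ now verified on every segment of $\gamma_{k}$, the index computation of the first paragraph applies and gives index $1$. The main obstacle is precisely this last step: $[\,f^{n_{k}}(p)\,,f(p)\,]$ is not an orbit segment, so Corollary~\ref{co:dpi} does not apply to it directly; the device that overcomes this is the recurrence $f^{n_{k}}(p)\to p$, which makes the closing segment an arbitrarily good approximation of $[\,p\,,f(p)\,]$ and transfers both estimates at the cost of a harmless loss in the angle bound.
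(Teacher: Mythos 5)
Your proposal is correct and follows essentially the same route as the paper, which only sketches this lemma by remarking that the closing segment $[\,f^{n_{k}}(p)\,,f(p)\,]$ inherits the properties of $[\,p\,,f(p)\,]$ for $k\gg0$ and then invokes the index argument preceding Lemma \ref{lem:ind:1}. Your treatment of the orbit segments via Corollary \ref{co:dpi} and of the closing segment via the recurrence $f^{n_{k}}(p)\to p$ (placing it inside $B[\,p\,,4\|f(p)-p\|\,]$ and transferring the angle bound with an $o(1)$ loss) is exactly the intended argument, spelled out in more detail than the paper itself provides.
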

%\vskip5pt

We can provide versions of the results of this paper
for the sphere \,$\mathbb{S}^{2} \subset \mathbb{R}^{3}$.
We can adapt the proofs of Theorem 1.1 of \cite{dff02} and Bonatti's Main Theorem of \cite{bo01}
to the $\epsilon$-Lipschitz with respect to the identity setting.
We remind the reader that if \,$f\in \mathrm{Homeo}(\Ss)$\, is $\epsilon$-Lipschitz
with respect to the identity then
\,$f$\, is $C^{0}$-close to the identity map when \,$\epsilon>0$\, is small enough.

%%%%%%%%%%%%%%%%%%%%%%%%%%%%%%%%%%%%%%%%%%%%%%%%%%%%%%%%%%%%%%%
%%%%%%%%%%%%%%%%%%%%%%%%%%%%%%%%%%%%%%%%%%%%%%%%%%%%%%%%%%%%%%%

\vglue30pt
\section{Some examples}\label{ap:c}
\vglue10pt

%%%%%%%%%%%%%%%%%%%%%%%%%%%%%%%%%%%%%%%%%%%%%%%%%%%%%%%%%%%%%%%
%%%%%%%%%%%%%%%%%%%%%%%%%%%%%%%%%%%%%%%%%%%%%%%%%%%%%%%%%%%%%%%

There exist several results in the literature showing
existence and localization of fixed points of abelian groups.
In this section we show that the scope of our results is wider.
More precisely, we provide examples of non-abelian nilpotent groups
satisfying the conditions in the main theorem,
i.e. $\sigma$-step nilpotent subgroups of homeomorphisms that are
$\epsilon$-Lipschitz with respect to the identity, and admitting
a global fixed point.

%%%%%%%%%%%%%%%%%%%%%%%%%%%%%%%%%%%%%%%%%%%%%%%%%%%%%%
\vskip20pt

\section*{\small  Examples arising from dimension $1$}
%%%%%%%%%%%%%%%%%%%%%%%%%%%%%%%%%%%%%%%%%%%%%%%%%%%%%%

We build examples of nilpotent subgroups of homeomorphisms
of the plane that are inspired by examples of groups of
homeomorphisms of the real line or the circle.

Plante and Thurston proved that the $C^{2}$-regularity imposes strong restrictions on
the nilpotent groups of diffeomorphisms of the real line \cite{pt01}.

%%%%%%%%%%%%%%%%%%%%%%%%%%%%%%%%%%%%%%%%%%%%%%%%%%%%%%%%%%%%%%%
%%%%%%%%%%%%%%%%%%%%%%%%%%%%%%%%%%%%%%%%%%%%%%%%%%%%%%%%%%%%%%%

\vskip10pt
\begin{teor01}%\label{}
Every nilpotent subgroup of  \,$\mathrm{Diff}^{2}([\,0\,,1\,])$\, or
\,$\mathrm{Diff}^{2}([\,0\,,1\,))$\, is abelian.
\end{teor01}
%\vskip5pt

%%%%%%%%%%%%%%%%%%%%%%%%%%%%%%%%%%%%%%%%%%%%%%%%%%%%%%%%%%%%%%%
%%%%%%%%%%%%%%%%%%%%%%%%%%%%%%%%%%%%%%%%%%%%%%%%%%%%%%%%%%%%%%%

Farb-Franks generalize the previous result for \,$\mathrm{Diff}^{2}(S^{1})$\, in \cite{ff01}.
Moreover they prove the following version for diffeomorphisms of the line.

%%%%%%%%%%%%%%%%%%%%%%%%%%%%%%%%%%%%%%%%%%%%%%%%%%%%%%%%%%%%%%%
%%%%%%%%%%%%%%%%%%%%%%%%%%%%%%%%%%%%%%%%%%%%%%%%%%%%%%%%%%%%%%%

\vskip10pt
\begin{teor02}%\label{}
The groups of diffeomorphisms of the line satisfy$\,:$
\begin{itemize}
\item
There exist \,$\sigma$-step nilpotent subgroups of \,$\mathrm{Diff}^{\infty}(\R)$\, for any \,$\sigma \geq 0 \,;$

\item
Any nilpotent subgroup of \,$\mathrm{Diff}^{2}(\R)$\, is metabelian, i.e.  \,$G_{(1)}$\, is
abelian$\,;$

\item
If \,$G$\, is a nilpotent subgroup of \,$\mathrm{Diff}^{2}(\R)$\,
and if any element of \,$G$\, has fixed points, then \,$G$\, is abelian.

\end{itemize}

\end{teor02}
\vskip5pt

%%%%%%%%%%%%%%%%%%%%%%%%%%%%%%%%%%%%%%%%%%%%%%%%%%%%%%%%%%%%%%%
%%%%%%%%%%%%%%%%%%%%%%%%%%%%%%%%%%%%%%%%%%%%%%%%%%%%%%%%%%%%%%%

In contrast, the case \,$C^{1}$ is radically different.

%%%%%%%%%%%%%%%%%%%%%%%%%%%%%%%%%%%%%%%%%%%%%%%%%%%%%%%%%%%%%%%
%%%%%%%%%%%%%%%%%%%%%%%%%%%%%%%%%%%%%%%%%%%%%%%%%%%%%%%%%%%%%%%

\vskip10pt
\begin{teor02}%\label{}
Let  \,$M=\R\ \,, \,S^{1}$\, or \,$[\,0\,,1\,]$\,.
Every finitely generated torsion-free nilpotent group is isomorphic
to a subgroup of \,$\mathrm{Diff}^{1}(M)$\,.
\end{teor02}
\vskip5pt

%%%%%%%%%%%%%%%%%%%%%%%%%%%%%%%%%%%%%%%%%%%%%%%%%%%%%%%%%%%%%%%
%%%%%%%%%%%%%%%%%%%%%%%%%%%%%%%%%%%%%%%%%%%%%%%%%%%%%%%%%%%%%%%

Given a group \,$G$\, we denote by \,$\mathrm{Tor}(G)$\, the subset of \,$G$\, of elements of
finite order. In general \,$\mathrm{Tor}(G)$\, is not a group but it is
a normal subgroup of \,$G$\, if \,$G$\, is a nilpotent group (cf. \cite{Karga}[Theorem 16.2.7]).
We say that \,$G$\, is \,{\it torsion-free}\, if \,$\mathrm{Tor}(G)=\{Id\}$.

We denote by \,$\mathcal{N}_{n}$\, the subgroup of \,$\mathrm{GL}(n,{\mathbb Z})$\, of
lower triangular matrices such that all coefficients in the main diagonal are equal to \,$1$.
The group \,$\mathcal{N}_{n}$\, is \,$(n-1)$-step nilpotent for any \,$n \geq 2$.
Given
\,$1\leq i< n$\, we denote by \,$\eta_{i}$\, the matrix in \,$\mathcal{N}_{n}$\, such that
the coefficient in the location \,$(i+1,i)$\, is equal to \,$1$\, and all other coefficients
outside of the main diagonal vanish.
The family \,$\{\eta_{i}\}_{1\leq i<n}$\, is a generator set of \,$\mathcal{N}_{n}$\,.
A theorem of Malcev (cf. \cite{ra01}) guarantees that a
finitely generated torsion-free nilpotent group is isomorphic to a
subgroup of \,$\mathcal{N}_{n}$\, for some \,$n \geq 1$\,.

Theorem 2.13 in  \cite{ff01} and its proof imply the following result.

%%%%%%%%%%%%%%%%%%%%%%%%%%%%%%%%%%%%%%%%%%%%%%%%%%%%%%%%%%%%%%%
%%%%%%%%%%%%%%%%%%%%%%%%%%%%%%%%%%%%%%%%%%%%%%%%%%%%%%%%%%%%%%%

\vskip10pt
\begin{teor03}%\label{}
Let \,$\epsilon>0$. There exists an injective homomorphism of groups
$$\psi:\mathcal{N}_{n} \longrightarrow \mathrm{Diff}^{1}([\,0\,,1\,])$$
such that$\,:$
\begin{itemize}
\item
any element of \,$\psi(\mathcal{N}_{n})$\, has derivative equal to $1$ in both
$0$ and $1 \,;$

\item
$|\psi(\eta_{i})'(x)-1|<\epsilon$\, in \,$[\,0\,,1\,]$\, for all \,$1\leq i<n$\,.

\end{itemize}

\end{teor03}
\vskip5pt

%%%%%%%%%%%%%%%%%%%%%%%%%%%%%%%%%%%%%%%%%%%%%%%%%%%%%%%%%%%%%%%
%%%%%%%%%%%%%%%%%%%%%%%%%%%%%%%%%%%%%%%%%%%%%%%%%%%%%%%%%%%%%%%

Let us define a monomorphism \,$\Psi:\mathcal{N}_{n} \longrightarrow \mathrm{Diff}^{1}(\RR)$.
Given \,$\eta \in \mathcal{N}_{n}$\, let \,$\Psi(\eta)$\, be the map defined by
$$\Psi(\eta)(t(x,y)):=\begin{cases}
[\psi(\eta)(t)](x,y)     & \textrm{if} \quad 0 \leq t \leq 1  \\
\  (x,y)   &   \textrm{if} \quad t>1  \,
\end{cases}$$
where \,$(x,y) \in {\mathbb S}^{1}$\, and \,$t \geq 0$.
Moreover, we can suppose that \,$\psi(\eta_{i})'$\, is arbitrarily
close to the constant function \,$1$\, for any \,$1 \leq i \leq n-1$.
The nilpotent group \,$\Psi(\mathcal{N}_{n})$\, is generated by the family
\,${\{ \Psi(\eta_{i}) \}}_{1 \leq i <n}$\,, whose elements are
$C^{1}$-close to the identity.
Therefore any finitely generated torsion-free nilpotent group can be realized
as a subgroup of \,$\mathrm{Diff}^{1}(\RR)$\, whose generators are arbitraly
close to the identity in the $C^{1}$-topology.

Navas proves the following result in a recent paper (cf. \cite[Th\'eor\`eme A]{navas01}).

%%%%%%%%%%%%%%%%%%%%%%%%%%%%%%%%%
\vskip10pt
\begin{teor04}%\label{}
Let \,$G$\, be a finitely generated subgroup of sub-exponential growth of
orientation-preserving homeomorphisms of \,$[\,0\,,1]$\, or \,$S^{1}$.
Then, given any \,$\epsilon>0$\,, there exist subgroups that are topologically conjugated to
\,$G$\, and such that the generators and their inverses are \,$e^{\epsilon}$-Lipschitz homeomorphisms.
\end{teor04}
\vskip5pt
%%%%%%%%%%%%%%%%%%%%%%%%%%%%%%%%%

The above theorem implies that in the one dimensional setting the Lipschitz
property can be assumed for the study of the topological dynamics
of finitely generated nilpotent groups of homeomorphisms.
Finitely generated nilpotent groups have polynomial growth.
A simple calculation shows that the generators in the above theorem  are
$(e^{\epsilon}-1)$-Lipschitz with respect to the identity.

%%%%%%%%%%%%%%%%%%%%%%%%%%%%%%%%%%%%%%%%%%%%%%%
\vskip20pt

\section*{\small Real analytic examples}
%%%%%%%%%%%%%%%%%%%%%%%%%%%%%%%%%%%%%%%%%%%%%%%

Our goal is providing examples of groups of real analytic diffeomorphisms
of the plane that have a global fixed point, any nilpotency class
and generators arbitrarily close to the identity map in the $C^{1}$-topology.
Indeed we find nilpotent Lie algebras of real analytic vector fields in \,${\mathbb S}^{2}$\,
whose set of common singular points is the set \,$\{0\,,\infty\}$\, for any nilpotency class.
Our examples are finitely generated subgroups of the image by the exponential map
of such Lie algebras.
Notice that all the above examples are essentially one dimensional and share the constraints
of the one dimensional theory. For instance the previous section does not provide
an example of
a non-abelian nilpotent subgroup \,$G$\, of  \,$\mathrm{Diff}^{2}({\mathbb R}^{2})$\,
with a global fixed point since such groups do not exist in dimension \,$1$.
We show that such two dimensional examples exist for any nilpotency
class and real analytic regularity.

We obtain other interesting results. It is well known that
Lie algebras of real analytic vector fields defined in surfaces
are metabelian. It was not known if there are examples of
Lie algebras of real analytic vector fields in ${\mathbb S}^{2}$
of any nilpotency class.
We prove that this is the case and along the way we give examples of
torsion-free nilpotent groups of real analytic diffeomorphisms in the sphere
for any nilpotency class.

We denote by \,$\mathrm{Diff}^{\omega}(S)$\, the group of real analytic diffeomorphisms
defined in a real analytic manifold \,$S$.
Let \,$\mathrm{Diff}_{+}^{\omega}(S)$\, be the subgroup of \,$\mathrm{Diff}^{\omega}(S)$\,
of orientation-preserving diffeomorphisms.

\vglue5pt

Let \,${\mathfrak g}$\, be a Lie algebra.  We denote
\,${\mathfrak g}_{(0)}=\mathfrak g$\, and  let
\,${\mathfrak g}_{(j+1)}$\, be the Lie algebra generated by
\,$\big\{ [f,g] \ ; \  f \in {\mathfrak g} \ \mathrm{and} \ g \in {\mathfrak g}_{(j)} \big\}$\,
for \,$j \geq 0$. We say that \,${\mathfrak g}$\, is \,$\sigma$-{\it step nilpotent}\, if
\,$\sigma$\, is the first element in \,${\mathbb Z}_{\geq0}$\, such that
\,${\mathfrak g}_{(\sigma)}=\{0\}$. In that case we say that \,$\sigma$\, is the
\,{\it nilpotency class}\, of
\,${\mathfrak g}$. We say that \,${\mathfrak g}$\, is \,{\it nilpotent}\, if
it is $\sigma$-step nilpotent for some \,$\sigma \in {\mathbb Z}_{\geq0}$\,.

\vglue5pt

A Lie algebra \,${\mathfrak g}$\, of real analytic vector fields in a surface is
always metabelian,
i.e.  \,${\mathfrak g}_{(1)}$\, is abelian.
Moreover the nilpotent subgroups of \,$\mathrm{Diff}^{\omega}({\mathbb S}^{2})$\,
are metabelian by a theorem of Ghys \cite{Ghys-identite}. In spite of this, the nilpotency class is
not bounded. The dihedral group
\[ D_{2^{\sigma}} = \langle \, f,g \ \, ; \  f^{2^{\sigma}} = 1 \ \ \text{and} \ \
g \circ f \circ g^{-1} = f^{-1} \rangle \]%
is a $\sigma$-step nilpotent 
group acting by Mobius transformations on the sphere.
The subgroup \,$\langle f \rangle$\, is an index \,$2$\,  abelian normal subgroup of
\,$D_{2^{\sigma}}$. An analogous property always holds true for nilpotent
groups of $C^{1}$-diffeomorphisms that preserve both area and orientation.

\begin{theo}
{\bf(Franks-Handel \cite{Fr-Hand:dis})}
Let \,$G$\, be a nilpotent subgroup of \,$\mathrm{Diff}_{+}^{1}({\mathbb S}^{2})$.
Let \,$\mu$\, be a \,$\phi$-invariant Borel probability measure for any \,$\phi \in G$.
Suppose that the support of \,$\mu$\, is the whole sphere. Then either \,$G$\, is abelian
or it contains an index \,$2$\, normal abelian subgroup.
\end{theo}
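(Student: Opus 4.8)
The plan is to exploit the tension between two facts about a finitely generated nilpotent group $G$: its commutator subgroup $G_{(1)}$ consists of \emph{distortion} elements, whereas distorted diffeomorphisms preserving a full support measure are dynamically very rigid. The whole argument is organised around a canonical $G$-invariant pair of ``poles'' on $\Ss$, whose complement is an annulus carrying a rotation number homomorphism; the dihedral group $D_{2^{\sigma}}$ (with $\langle f\rangle$ the rotations fixing the poles and $g$ the involution swapping them) is the model to keep in mind. I would first reduce to the case in which $G$ is finitely generated, since the objects produced below are canonical and the arguments then pass to the union of finitely generated subgroups.

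I would begin by locating a $G$-invariant two point set. If $G$ is abelian there is nothing to prove, so assume $G$ is non-abelian; then its nilpotency class is at least $2$ and the last nontrivial term of the lower central series provides a nontrivial element $z\in G_{(1)}\cap Z(G)$. As $G$ is finitely generated and nilpotent, every element of $G_{(1)}$ is a distortion element, so $z$ is a central distortion element of the $\mu$-preserving action. Each $\phi\in G$ is isotopic to the identity on $\Ss$, hence has a fixed point by the Lefschetz formula, and since $\mu$ is a finite invariant measure of full support the dynamics is non-wandering everywhere, so that Brouwer's translation theorem \cite{bro01} and Poincar\'e recurrence apply freely. The classification of distorted $\mu$-preserving sphere diffeomorphisms in \cite{Fr-Hand:dis} shows that $z$ behaves like a rotation with $\mathrm{Fix}(z)$ equal to a pair of points $\{p,q\}$; because $z$ is central, $\{p,q\}=\mathrm{Fix}(z)$ is $G$-invariant. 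Writing $A:=\Ss\setminus\{p,q\}$ for the resulting open annulus, $\mu|_A$ is a finite invariant measure of full support on $A$.

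The dichotomy now comes from the rotation number. Let $H:=\{g\in G\ ;\ g(p)=p,\ g(q)=q\}$ be the kernel of the action of $G$ on $\{p,q\}$; it is normal with $[G:H]\le 2$. For $h\in H$ the measure $\mu|_A$ defines a mean rotation number, giving a homomorphism $\rho:H\to\R/\Z$. I claim $H$ is abelian. Given $a,b\in H$ the commutator $[a,b]$ lies in $G_{(1)}$, hence is a distortion element, and it lies in $\ker\rho$ because $\rho$ is valued in the abelian group $\R/\Z$. The key dynamical input of \cite{Fr-Hand:dis} is that a $C^{1}$ diffeomorphism of $A$ that preserves a full support finite measure, fixes both ends, is a distortion element of $G$, and has vanishing rotation number must be the identity: distortion combined with zero mean rotation forces, via the Franks-Handel fixed point theory and an Atkinson-type recurrence argument, a positive measure set of fixed points, which full support and the $C^{1}$ hypothesis upgrade to all of $A$. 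Hence $[a,b]=Id$, so $H$ is abelian. If $H=G$ then $G$ is abelian; if $[G:H]=2$ then $H$ is the required index $2$ normal abelian subgroup, the nontrivial coset being represented by an element that swaps $p$ and $q$ and therefore conjugates the rotations of $A$ to their inverses, exactly as for $D_{2^{\sigma}}$.

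The hard part will be the rigidity statement of the third paragraph: that a distorted, rotationless, full support $\mu$-preserving $C^{1}$ diffeomorphism of the annulus fixing both ends is the identity. This is the technical core of \cite{Fr-Hand:dis}; it rests on their theory of maximal isotopies and transverse foliations for surface homeomorphisms, together with a careful comparison between the sublinear growth, along a subsequence, of the word lengths $|g^{n}|$ of a distortion element $g$ and the displacement recorded by the invariant measure. A secondary difficulty, also dispatched by that theory, is showing that the invariant set $\mathrm{Fix}(z)$ is genuinely a pair of points rather than a single point or a larger invariant continuum; this is precisely where it matters that $z$ is simultaneously central, distorted and measure-preserving.
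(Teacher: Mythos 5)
First, a point of comparison: the paper does not prove this statement at all. It is quoted verbatim from Franks--Handel \cite{Fr-Hand:dis} and used only as motivation in the appendix on real analytic examples, so there is no internal proof to measure your argument against. Judged on its own, your outline does follow the strategy of the actual proof in \cite{Fr-Hand:dis} (distortion of commutator elements, an invariant two-point set, the mean rotation number on the complementary annulus), but it has a concrete gap.

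The gap is the blanket assertion that ``every element of $G_{(1)}$ is a distortion element.'' A distortion element is by definition of \emph{infinite} order, and in a finitely generated nilpotent group the commutator subgroup may consist entirely of torsion. Your own guiding model exhibits this: for $D_{2^{\sigma}}$ one has $G_{(1)}=\langle f^{2}\rangle$, a finite cyclic group, and $Z(G)\cap G_{(1)}$ is generated by an involution. So the central element $z$ you select need not be distorted, and the classification of distorted $\mu$-preserving diffeomorphisms of $\Ss$ cannot be invoked to produce the pair $\{p,q\}$; the same objection applies to the commutators $[a,b]$ used to show that the stabilizer $H$ is abelian. These torsion cases are exactly the ones responsible for the ``index $2$'' alternative in the conclusion, so they cannot be absorbed into the distortion argument: they require separate input (a nontrivial finite-order element of $\mathrm{Diff}^{1}_{+}(\Ss)$ preserving a full-support measure is conjugate to a rotation, hence has exactly two fixed points; and a nontrivial finite-order element of the annulus stabilizer with vanishing rotation number would have a fixed point in the open annulus, which is impossible). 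A secondary, smaller issue is the step ``a positive measure set of fixed points, which full support and the $C^{1}$ hypothesis upgrade to all of $A$'': passing from $\mu(\mathrm{Fix})>0$ to $\mathrm{supp}(\mu)\subset\mathrm{Fix}$ is itself one of the main technical lemmas of \cite{Fr-Hand:dis} and not a formal consequence of full support, so it should at least be flagged as borrowed rather than folded into the sentence as if automatic.
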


In particular the above theorem implies that a torsion-free nilpotent subgroup
of  \,$\mathrm{Diff}^{1}_{+}({\mathbb S}^{2})$\, preserving a measure of total support
is always abelian. We consider groups
of real analytic diffeomorphisms instead of the area preserving hypothesis.
We are replacing a rigidity condition with another one and it is natural to ask
if analyticity restricts the examples of nilpotent groups as much as preservation of
area.
Indeed Ghys suggests in \cite{Ghys-identite} that the quotient \,$G/\mathrm{Tor}(G)$\,
is likely to be of nilpotency class less or equal than \,$2$\, for any
nilpotent subgroup \,$G$\, of \,$\mathrm{Diff}^{\omega}({\mathbb S}^{2})$.
We will show that this is not the case.

\begin{theo}
\label{teo:ntor}
Given  \,$\sigma \in {\mathbb Z}^{+}$\, there exists a  \,$\sigma$-step nilpotent
torsion-free subgroup of \,$\mathrm{Diff}_{+}^{\omega}({\mathbb S}^{2})$.
\end{theo}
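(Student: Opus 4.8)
The plan is to realize the desired group as a lattice inside a simply connected nilpotent Lie group acting faithfully and real analytically on the sphere. Concretely, I would first exhibit, for each $\sigma\in\Z^{+}$, a finite-dimensional nilpotent Lie algebra $\mathfrak{g}_{\sigma}$ of real analytic vector fields on $\Ss=\mathbb{C}\cup\{\infty\}$ whose nilpotency class is exactly $\sigma$ and whose common singular set is $\{0,\infty\}$. The natural building blocks are the rotation field $R$ (whose flow is $z\mapsto e^{it}z$, vanishing only at $0$ and $\infty$) together with a lowering field $X$ chosen so that $\mathrm{ad}_{X}$ acts nilpotently on a carefully selected finite-dimensional space of fields of the form (polynomial multiplier)$\cdot R$; iterating $\mathrm{ad}_{X}$ then produces a filiform tower $Y_{1}\mapsto Y_{2}\mapsto\cdots\mapsto Y_{\sigma}\mapsto 0$ of pairwise commuting fields, yielding a metabelian Lie algebra of class $\sigma$. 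I would arrange the multipliers so that every $Y_{i}$ vanishes to the order needed to extend real analytically across both poles.

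Granting such a $\mathfrak{g}_{\sigma}$, the second step is integration. Since $\Ss$ is compact, every real analytic vector field is complete, so the inclusion $\mathfrak{g}_{\sigma}\hookrightarrow\mathrm{Vect}^{\omega}(\Ss)$ integrates to a Lie group homomorphism $\Phi:N_{\sigma}\to\mathrm{Diff}^{\omega}(\Ss)$, where $N_{\sigma}$ is the simply connected nilpotent Lie group with Lie algebra $\mathfrak{g}_{\sigma}$; its image lands in $\mathrm{Diff}_{+}^{\omega}(\Ss)$ because each $\exp(tX)$ is joined to the identity by the flow and hence is orientation-preserving. Because the representation $\mathfrak{g}_{\sigma}\hookrightarrow\mathrm{Vect}^{\omega}(\Ss)$ is faithful, $d\Phi$ is injective and $\ker\Phi$ is discrete, hence central in $N_{\sigma}$; the quotient $N_{\sigma}/\ker\Phi$ still has Lie algebra $\mathfrak{g}_{\sigma}$ and is therefore again $\sigma$-step nilpotent.

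The third step produces the finitely generated torsion-free group. Since $\mathfrak{g}_{\sigma}$ has rational (indeed integral) structure constants, $N_{\sigma}$ admits a lattice $\Gamma$ by Malcev's criterion (as in the rational structure exploited for $\mathcal{N}_{n}$, cf. \cite{ra01}); any such $\Gamma$ is finitely generated, torsion-free, and $\sigma$-step nilpotent, because $N_{\sigma}$ is diffeomorphic to $\mathfrak{g}_{\sigma}$ via $\exp$ and a lattice inherits the nilpotency class of its ambient group. Choosing $\Gamma$ so that $\Gamma\cap\ker\Phi=\{Id\}$ (possible since $\ker\Phi$ is discrete), the restriction $\Phi|_{\Gamma}$ is injective, so $\Phi(\Gamma)$ is a finitely generated torsion-free $\sigma$-step nilpotent subgroup of $\mathrm{Diff}_{+}^{\omega}(\Ss)$, as required.

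The hard part will be the very first step: manufacturing the analytic Lie algebra. The obstruction is that a field fixing both $0$ and $\infty$ with a hyperbolic (scaling-type) linearization acts semisimply on the ambient analytic fields, which would force $\mathrm{ad}_{X}$ to vanish on any invariant subalgebra; conversely the cylinder coordinate $\log|z|$ that would linearize the tower is not analytic at the poles. Overcoming this—by letting the tower fields vanish to high order at $\{0,\infty\}$ so that their linear parts are simultaneously nilpotent while the polynomial multipliers still extend analytically—is precisely the point that makes nilpotency classes above $2$ possible and is the technical heart of the theorem.
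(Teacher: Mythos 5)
Your proposal has a genuine gap, and it sits exactly where you say the ``technical heart'' is: the construction of a $\sigma$-step nilpotent Lie algebra of real analytic vector fields on the sphere is never carried out, and it is the entire content of the theorem. Steps two and three (integrating $\mathfrak{g}_{\sigma}$ to a homomorphism $\Phi:N_{\sigma}\to\mathrm{Diff}_{+}^{\omega}({\mathbb S}^{2})$ and restricting to a Malcev lattice) are routine once $\mathfrak{g}_{\sigma}$ exists, and they would even give a finitely generated group, which is a reasonable alternative to the paper's endgame (the paper instead takes the full exponential image $G=\exp({\mathcal G})$, proves it is a group via Baker--Campbell--Hausdorff inside $\mathrm{Diff}_{1}({\mathbb R}^{2},0)$, gets torsion-freeness from $\nu(\phi)=\nu(\phi^{k})$, and reads off the nilpotency class from the equivalence between nilpotent Lie algebras and unipotent algebraic groups). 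But none of this can start without the algebra, and asserting its existence while flagging the obstruction is not a proof.

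Moreover, the one concrete route you sketch --- take fields of the form $(\text{polynomial multiplier})\cdot R$ with $R$ the rotation field and arrange the multipliers to ``vanish to the order needed to extend real analytically across both poles'' of ${\mathbb C}\cup\{\infty\}$ --- is precisely what the paper observes does \emph{not} work: the vector fields $X_{1}=x^{2}(x^{2}+y^{2})^{p-1}\{-y\,\partial_{x}+x\,\partial_{y}\}$ and the lowering field $Y_{1}$ built from the pair $(\alpha_{1},\beta_{1})=\big(1/(x^{2}+y^{2})^{k},\,y/(x(x^{2}+y^{2})^{p})\big)$ do not extend to the standard sphere. The paper's resolution is to build a \emph{new} real analytic surface by regluing the two discs $\{|z|<R\}$ and $\{|z|>R^{-1}\}$ along an annulus via an explicit analytic diffeomorphism $\phi$ conjugating $(X_{1},Y_{1})$ to $(-(1/z)_{*}X_{1},-(1/z)_{*}Y_{1})$; the resulting surface is then identified with ${\mathbb S}^{2}$ because any analytic sphere is analytically standard. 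This change of atlas is the key idea that circumvents the obstruction you correctly identify (semisimplicity of the adjoint action of a field with hyperbolic linearization at the poles), and nothing in your proposal supplies a substitute for it.
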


We also obtain\,:
\begin{theo}
\label{teo:c1pid}
Given  \,$\sigma \in {\mathbb Z}^{+}$\, there exist
\,$\phi_{1},\ldots ,\phi_{\sigma+1} \in \mathrm{Diff}_{+}^{\omega}({\mathbb R}^{2})$\,
sharing a common fixed point and such that
\,$\langle \phi_{1}, \hdots, \phi_{\sigma+1} \rangle$\,  is $\sigma$-step nilpotent.
Moreover, we can choose the generators \,$\phi_{j}$\, arbitrarily and  uniformly close to
the identity in the $C^{1}$-topology.
\end{theo}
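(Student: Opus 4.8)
The plan is to deduce the statement from the construction underlying Theorem \ref{teo:ntor}. That construction does not merely produce a $\sigma$-step nilpotent group acting on $\Ss$; as the preceding discussion indicates, it exhibits such a group as the image, under the time-one flow (exponential) map, of a $\sigma$-step nilpotent Lie algebra $\mathfrak{g}$ of real analytic vector fields on $\Ss$ whose common singular set is $\{0,\infty\}$. I would fix such a $\mathfrak{g}$ and regard $\RR = \Ss \setminus \{\infty\}$, so that every $X \in \mathfrak{g}$ restricts to a complete real analytic vector field on $\RR$ vanishing at the origin. Choosing a generating family $X_1,\ldots,X_{\sigma+1}$ of $\mathfrak{g}$ as a Lie algebra (padding with redundant generators if the minimal number happens to be smaller), the candidate diffeomorphisms are the rescaled time maps $\phi_j := \exp(\epsilon X_j) \in \mathrm{Diff}_+^{\omega}(\RR)$, which all fix the origin by construction.

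To control the nilpotency class I would argue on two sides. For the upper bound, $\langle \phi_1,\ldots,\phi_{\sigma+1}\rangle$ is contained in the connected, simply connected nilpotent Lie group $N=\exp(\mathfrak{g})$, whose nilpotency class equals that of $\mathfrak{g}$, namely $\sigma$; hence the group generated by the $\phi_j$ is at most $\sigma$-step nilpotent. For the lower bound I would invoke the Baker-Campbell-Hausdorff formula: a length-$\ell$ iterated group commutator of the $\phi_j$ equals $\exp$ of the corresponding iterated Lie bracket $\epsilon^{\ell}\,[X_{i_1},[\ldots,X_{i_\ell}]\ldots]$ together with terms of strictly higher order in $\epsilon$. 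Since $\mathfrak{g}_{(\sigma-1)}\neq \{0\}$ is spanned by length-$\sigma$ iterated brackets, I would select the generators so that some such bracket is a nonzero element of $\mathfrak{g}_{(\sigma-1)}$; then the associated BCH expression is nontrivial for every sufficiently small $\epsilon>0$, so $G_{(\sigma-1)}\neq\{Id\}$ and the group is exactly $\sigma$-step nilpotent.

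The $C^1$-closeness and the ``arbitrarily small'' clause then come for free from the rescaling. Replacing each $X_j$ by $\epsilon X_j$ changes neither the linear span $\mathfrak{g}$ nor, since brackets only acquire factors $\epsilon^{2}$, its nilpotency class, so the preceding paragraph applies uniformly in $\epsilon$. On the compact sphere $\Ss$ the flows $\exp(\epsilon X_j)$ converge to the identity in the $C^{1}$-topology as $\epsilon \to 0$, uniformly; restricting to $\RR \subset \Ss$ preserves this uniform $C^{1}$-closeness. Thus for any prescribed tolerance one obtains generators $\phi_1,\ldots,\phi_{\sigma+1}$ as $C^{1}$-close to $Id$ as desired, all fixing the origin and generating a $\sigma$-step nilpotent group.

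The genuinely hard part is not in this argument but in the input I am allowed to assume, namely the existence for every $\sigma$ of a $\sigma$-step nilpotent Lie algebra of real analytic vector fields on $\Ss$ with isolated common singular set (Theorem \ref{teo:ntor}). I expect the naive rotationally symmetric ansatz --- a north-south field together with latitude-dependent rotations --- to fail, because bracketing with the north-south field induces on the rotation coefficients an operator conjugate to $(1-x_3^2)\tfrac{d}{dx_3}$, whose flow coordinate is transcendental and destroys analyticity at the poles; so the construction behind Theorem \ref{teo:ntor} must break this symmetry. Granting it, the only delicate point remaining here is the lower bound on the nilpotency class of the \emph{discrete} group, which the BCH computation above settles.
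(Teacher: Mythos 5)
Your overall strategy matches the paper's: take the $\sigma$-step nilpotent Lie algebra ${\mathcal G}$ of real analytic vector fields on the sphere constructed for Theorem \ref{teo:ntor}, exponentiate small multiples of a generating family, and fix the origin. Your lower bound on the nilpotency class via the leading term of the Baker--Campbell--Hausdorff expansion (a depth-$\sigma$ group commutator equals $\exp$ of $\epsilon^{\sigma}$ times the corresponding Lie bracket plus higher order terms in $\epsilon$) is a legitimate and somewhat more elementary substitute for the paper's argument, which instead passes to the Zariski closure of the generated group inside $j^{k}\mathrm{Diff}_{1}({\mathbb R}^{2},0)$ and uses the equivalence between unipotent algebraic groups and nilpotent Lie algebras to identify the nilpotency classes. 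Both routes work.

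There is, however, a genuine gap in your treatment of the $C^{1}$-closeness. You assert that since $\exp(\epsilon X_{j}) \to Id$ in the $C^{1}$-topology on the compact sphere, ``restricting to ${\mathbb R}^{2} \subset {\mathbb S}^{2}$ preserves this uniform $C^{1}$-closeness.'' This is false as a general principle: the Euclidean metric of the chart ${\mathbb R}^{2} = {\mathbb S}^{2} - \{\infty\}$ blows up relative to the round metric near $\infty$ (by a factor comparable to $|z|^{2}$), so a diffeomorphism $C^{1}$-close to the identity on the sphere need not be uniformly $C^{0}$- or $C^{1}$-close to the identity on the plane. The Moebius map $z \mapsto e^{t}z$, generated by a field with a simple zero at $\infty$, already gives $\|f(z)-z\| = |e^{t}-1|\,|z|$ unbounded. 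What saves the construction is that every $Z \in {\mathcal G}$ vanishes to order at least $3$ at $\infty$; this is exactly the content of Proposition \ref{pro:conv1}, whose proof computes in the coordinate $w=1/z$ that the displacement of $\exp(tZ)$ is $O(tw)$ and its derivative deviation is $O(tw^{2})$, both of which tend to $0$ uniformly near $\infty$. Your argument needs this computation (or at least the observation about the vanishing order at $\infty$) to be complete; without it the ``uniformly close in the $C^{1}$-topology'' clause of the statement is not justified.
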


%%%%%%%%%%%%%%%%%%%%%%%%%%%%%%%%%%%%%%%%%%%%%%%%%%%%
\vskip10pt
\subsection*{A method to obtain nilpotent Lie algebras}
%%%%%%%%%%%%%%%%%%%%%%%%%%%%%%%%%%%%%%%%%%%%%%%%%%%%%%%

Let us explain our method for the real plane.
We denote by \,${\mathbb R}[x,y]$\, the ring of real polynomials in two variables.
Let \,$\alpha_{1}\,,\beta_{1}$\, be quotients of convenient elements of \,${\mathbb R}[x,y]$\, such that\,:

\begin{enumerate}
\item
$d \alpha_{1} \wedge d \beta_{1} = D \, dx \wedge dy$\, where $1/D \in {\mathbb R}[x,y]$\,;

\item
$\frac{1}{D}  d \alpha_{1}$\, and \,$\frac{1}{D}d \beta_{1}$\, are $1$-forms with no poles in ${\mathbb R}^{2}$.
\end{enumerate}
Let us consider vector fields \,$X_{1}$\, and \,$Y_{1}$\, defined in \,${\mathbb R}^{2}$\, such that
\[
\left\{ \begin{array}{ccc}
X_{1}(\alpha_{1}) & \equiv & 0 \\
Y_{1}(\alpha_{1}) & \equiv & 1
\end{array} \right.
\ \ \text{and} \ \
\left\{ \begin{array}{ccl}
X_{1}(\beta_{1}) & \equiv & 1 \\
Y_{1}(\beta_{1}) & \equiv & 0\,.
\end{array} \right.
\]
A straightforward calculation implies
\[ X_{1} = \frac{-\frac{\partial \alpha_{1}}{\partial y}}{D} \frac{\partial}{\partial x} +
\frac{\frac{\partial \alpha_{1}}{\partial x}}{D} \frac{\partial}{\partial y} \quad \text{and} \quad
Y_{1} = \frac{\frac{\partial \beta_{1}}{\partial y}}{D} \frac{\partial}{\partial x} +
\frac{-\frac{\partial \beta_{1}}{\partial x}}{D} \frac{\partial}{\partial y} .
\]

Condition (2) implies that $\frac{1}{D}  d \alpha_{1}$\, is polynomial.
Since $\frac{1}{D}  d \alpha_{1}$\, is equal to
$\frac{1}{D} \frac{\partial \alpha_{1}}{\partial x} dx + \frac{1}{D} \frac{\partial \alpha_{1}}{\partial y} dy$
then \,$X_{1}$\, is a polynomial vector field. Analogously
\,$Y_{1}$\, is a polynomial vector field.
Moreover \,$\alpha_{1}$\, and \,$\beta_{1}$\, can be considered as variables outside of
a proper real algebraic set. We obtain \,$[X_{1},Y_{1}] \equiv 0$\, since
\,$[X_{1},Y_{1}](\alpha_{1}) \equiv [X_{1},Y_{1}](\beta_{1}) \equiv 0$\, by definition.

 Therefore the real vector space
\,$\langle X_{1}, \alpha_{1} X_{1}, \hdots, \alpha_{1}^{l-1} X_{1}, Y_{1}\rangle_{\mathbb R}$\,
is a \,$l$-step nilpotent Lie algebra of polynomial vector fields of \,${\mathbb R}^{2}$\,  if
\,$\alpha_{1}^{l-1} X$\, has no poles.

Let us introduce the choice of \,$\alpha_{1}$\, and \,$\beta_{1}$\, that is going to provide our
examples. Consider
\[ \left( \alpha_{1}, \beta_{1}, d \alpha_{1} \wedge d \beta_{1} \right) =
\left(
\frac{1}{(x^{2}+y^{2})^{k}} \,, \frac{y}{x (x^{2}+y^{2})^{p}} \,, -2 k \frac{dx \wedge dy}{x^{2} (x^{2}+y^{2})^{k+p}}
\right)   \ \ \text{with} \ \ k\hskip1pt,p\geq1\,. \]%

\noindent
We obtain
\[ X_{1} = x^{2} (x^{2}+y^{2})^{p-1}\left\{ -y \frac{\partial}{\partial x} + x \frac{\partial}{\partial y} \right\}
 \]
and
\[ Y_{1}= - \frac{1}{2k} (x^{2}+y^{2})^{k-1} \left\{
x\big(x^{2} + (1-2p) y^{2}\big) \frac{\partial}{\partial x} + y\big((1+2p) x^{2} + y^{2}\big) \frac{\partial}{\partial y}
\right\} . \]

\noindent
 We also have that \,$(d \alpha_{1})/D$\,
and \,$(d \beta_{1})/D$\, has no poles.
Moreover if \,$p-1 \geq k(l-1)$\, then
the vector field \,$\alpha_{1}^{l-1} X_{1}$\, has no poles.

 In summary,
\,$\langle X_{1}, \alpha_{1} X_{1}, \hdots, \alpha_{1}^{l-1} X_{1}, Y_{1}\rangle_{\mathbb R}$\, is a
$l$-step nilpotent
Lie algebra of polynomial vector fields
if  \,$p-1 \geq k(l-1)$.

%%%%%%%%%%%%%%%%%%%%%%%%%%%%%%%%%%%%%%%%%%%%%%%%%%%%%%%
\vskip10pt
\subsection*{Examples of nilpotent groups  on the sphere}
%%%%%%%%%%%%%%%%%%%%%%%%%%%%%%%%%%%%%%%%%%%%%%%%%%%%%%%

Let us try to generalize the example to the sphere. Unfortunately the vector fields
\,$X_{1}$\, and \,$Y_{1}$\, do not extend to real analytic vector fields of \,${\mathbb S}^{2}$.

Let us study the dynamics of \,$X_{1}$\, and \,$Y_{1}$.
Since \,$X_{1}(\alpha_{1}) \equiv 0$\, the function
\,$x^{2}+y^{2}$\, is a first integral of \,$X_{1}$. The trajectories of \,$X_{1}$\, are contained in
circles and \,$\mathrm{Sing (X_{1})} = \{x=0\}$.
On the other hand the trajectories of \,$Y_{1}$\, are transversal to the level curves of
\,$x^{2}+y^{2}$\, since \,$Y_{1} (\alpha_{1}) \equiv 1$.
This condition also implies that \,$\mathrm{exp}(t Y_{1})$\, sends
\,$\{\alpha_{1} =s \}$\, to \,$\{\alpha_{1} =s+t \}$\, for \,$s\,,t \in {\mathbb R}$.

Consider an annulus
\[ A = \{z \in {\mathbb C} \ \, ; \, R^{-1} < |z| <R \} = \{ (x,y) \in {\mathbb R}^{2}
\ \, ; \  R^{-1} < \sqrt{x^{2}+y^{2}} < R \} \]
for some \,$R >1$.
We define \,$X_{2} = - (1/z)_{*} X_{1}$\, and \,$Y_{2} = - (1/z)_{*} Y_{1}$\,; both vector fields
are defined in  \,${\mathbb S}^{2} - \{ |z| \leq R^{-1} \}$.
We claim that there exists a real analytic diffeomorphism
\,$\phi: A \to A$\, such that \,$\phi_{*} X_{1} = X_{2}$\, and \,$\phi_{*} Y_{1} = Y_{2}$.
More precisely \,$\phi$\, conjugates the pair of functions
\,$(\alpha_{1}, \beta_{1})$\, with
\[ \left( - \alpha_{1} \circ \frac{1}{z} + \kappa \,, - \beta_{1}  \circ \frac{1}{z} \right)=
\left(
- (x^{2} + y^{2})^{k} +\kappa \,, \frac{y (x^{2} + y^{2})^{p}}{x}
\right) \]
where \,$\kappa=R^{2 k} + R^{-2 k}$. The sets
\,$\alpha_{1}(A)$\, and \,$(- \alpha_{1} \circ (1/z) + \kappa)(A)$\, coincide
by definition of \,$\kappa$.
The map \,$(\alpha_{1},\beta_{1})$\, is not injective since
\,$(\alpha_{1},\beta_{1})(x,y) = (\alpha_{1},\beta_{1})(-x,-y)$.
Anyway a homeomorphism \,$\phi: \overline{A} \to \overline{A}$\, is
uniquely determined by the conditions
\[ \left( - \alpha_{1} \circ \left( \frac{1}{z} \right) + \kappa \right) \circ \phi \equiv \alpha_{1}
 \quad  ; \quad
- \beta_{1} \circ \left( \frac{1}{z} \right) \circ \phi \equiv \beta_{1} \quad \text{and} \quad
\phi \{x>0\} \subset \{x>0\}\]%
Since \,$\alpha_{1}$\,, $\beta_{1}$\, are
coordinates
in the neighborhood of any point in \,$x \neq 0$\,  then the map
$$\phi: A - \{x=0\} \to A - \{x=0\}$$
is a real analytic
diffeomorphism. Analogously \,$\alpha_{1}$\, and \,$1/\beta_{1}$\, are real analytic
coordinates in the neighborhood of the points in the line \,$x=0$.
We deduce that \,$\phi:A \to A$\, is a real analytic diffeomorphism.

Let \,$S$\, be the real analytic surface \,$S$\, obtained by pasting the charts
\,$U_{1} = \{|z| < R\}$\, and \,$U_{2} = \{|z| > R^{-1}\}$\, by the
diffeomorphism \,$\phi$.
The surface \,$S$\, is homeomorphic to a sphere, so it is real analytically
diffeomorphic to a sphere.
Since \,$\phi_{*} X_{1} =X_{2}$\, there exists a unique
real analytic vector field \,$X$\, in \,${\mathbb S}^{2}$\, such that
\,$X_{|U_{1}} \equiv X_{1}$\, and \,$X_{|U_{2}} \equiv X_{2}$.
Analogously we can define \,$Y$\,, $\alpha$\, and \,$\beta$\,
such that \,$Y_{|U_{1}} \equiv Y_{1}$\,, $Y_{|U_{2}} \equiv Y_{2}$\,,
\,$\alpha_{|U_{1}} \equiv \alpha_{1}$\,,
\,$\alpha_{|U_{2}} \equiv - \alpha_{1} \circ (1/z) + \kappa$\,,
\,$\beta_{|U_{1}} \equiv \beta_{1}$\, and
\,$\beta_{|U_{2}} \equiv - \beta_{1} \circ (1/z)$.
We obtain that
\[ {\mathcal G} :{=} \langle X, \alpha X, \hdots, \alpha^{l-1} X, Y \rangle_{\mathbb R} \]
is a $l$-step nilpotent Lie algebra of real analytic vector fields of
the sphere.
In particular \,${\mathcal G}$\, is non-abelian if \,$l >1$.

The vector field \,$Y$\, has two singularities corresponding to the origin of both local charts.
We can suppose \,$\mathrm{Sing}(Y) = \{\,0\,,\infty \}$.
The common singular set of all vector fields of the form \,$P(\alpha) X$\,, where \,$P$\,
is a polynomial of degree less than \,$l$, is a circle \,$C$.
The equation of \,$C$\, is \,$x=0$\, in both charts.
Hence all vector fields in \,${\mathcal G}$\, are singular at both \,$0$\, and \,$\infty$.
We obtain that \,${\mathcal G}$\, is a Lie algebra of real analytic vector fields defined in both
\,${\mathbb R}^{2}$\, and \,${\mathbb S}^{2}$.

%%%%%%%%%%%%%%%%%%%%%%%%%%%%%%%%%%%%%%%%%%%%%%%%%%
\vskip10pt
\subsection*{The Baker-Campbell-Hausdorff formula}
%%%%%%%%%%%%%%%%%%%%%%%%%%%%%%%%%%%%%%%%%%%%%%%%%%%%%%%

Given a Lie algebra \,${\mathcal G}$\, as defined above the set
\,$G=\mathrm{exp}({\mathcal G})$\, of time \,$1$\, flows of elements of \,${\mathcal G}$\,
is a subset of \,$\mathrm{Diff}_{+}^{\omega}({\mathbb S}^{2})$.
In this section we show that \,$G$\, is a nilpotent group of the same
nilpotency class as \,${\mathcal G}$. This is an application of
Baker-Campbell-Hausdorff formula. The material in this section
is well-known, it is included for the sake of completeness.

Let \,$G$\, be a Lie group with Lie algebra \,${\mathfrak g}$.
The exponential \,$\mathrm{exp}({\mathfrak g})$\, does not coincide in general with the
connected component of the identity but anyway it contains a neighborhood of the identity element.
Hence given elements \,$\mathrm{exp}(Z)$\, and \,$\mathrm{exp}(W)$\, in \,$G$\, closed to the identity element
we have that
the element \,$\mathrm{exp}(Z) \mathrm{exp}(W)$\, belongs to \,$\mathrm{exp}({\mathfrak g})$\,
and \,$\log (\mathrm{exp}(Z) \mathrm{exp}(W))$\, is given by the following formula
due to Dynkin:
\begin{equation}
\label{equ:BCH}
 \sum_{n>0}\frac {(-1)^{n-1}}{n} \sum_{ \begin{smallmatrix} {r_i + s_i > 0} \\ {1\le i \le n} \end{smallmatrix}}
\frac{\big(\sum_{i=1}^n (r_i+s_i)\big)^{-1}}{r_1!s_1!\cdots r_n!s_n!} \, [ Z^{r_1} W^{s_1}  \ldots Z^{r_n} W^{s_n} ]
\end{equation}
where \,$[Z^{r_1} W^{s_1} \ldots Z^{r_n} W^{s_n} ]$\, is equal to
\[
[ \underbrace{Z,\ldots[Z}_{r_1} , \,\ldots\, [ \underbrace{W,\ldots [W}_{s_1}, \ldots,
[ \underbrace{Z,[Z,\ldots[Z}_{r_n} ,[ \underbrace{W,[W,\ldots W}_{s_n} ]]\ldots]]. \]
The Baker-Cambpell-Hausdorff theorem says
that such a universal formula exists
and that \,$\log (\mathrm{exp}(Z) \mathrm{exp}(W)) - (Z+W)$\, is a bracket polynomial in
\,$Z$\, and \,$W$.
Moreover if \,$G$\, is a simply connected nilpotent Lie group
then the exponential map \,$\mathrm{exp}: {\mathfrak g} \to G$\, is an analytic diffeomorphism
(cf. \cite{Corwin-Greenleaf}[Theorem 1.2.1]).
In such a case the sum defining Equation (\ref{equ:BCH}) contains finitely many
non-zero terms.

Let us explain how to apply these ideas to the study of the elements of the
group \,$\mathrm{Diff}({\mathbb R}^{2},0)$\, of real analytic germs of diffeomorphism
defined in a neighborhood of \,$(0\,,0)$.
Any element \,$\varphi \in \mathrm{Diff}({\mathbb R}^{2},0)$\, has a power series
expansion of the form
\[ \varphi(x,y) = \left( \sum_{j+k \geq 1} a_{j,k}\, x^{j} y^{k}, \sum_{j+k \geq 1} b_{j,k}\, x^{j} y^{k} \right) \]
where \,$\sum_{j+k \geq 1} a_{j,k}\, x^{j} y^{k}$\, and \,$\sum_{j+k \geq 1} b_{j,k}\, x^{j} y^{k}$\,
are convergent power series with real coefficients and
\,$(a_{1,0}\, x + a_{0,1} y \,, b_{1,0}\, x + b_{0,1} y)$\, is a linear isomorphism.
In fact the previous conditions on
\,$\sum_{j+k \geq 1} a_{j,k}\, x^{j} y^{k}$\, and \,$\sum_{j+k \geq 1} b_{j,k}\, x^{j} y^{k}$\,
determine a unique element of \,$\mathrm{Diff}({\mathbb R}^{2},0)$\, by the inverse function theorem.
Let \,${\mathfrak m}$\, be the maximal ideal of the local ring \,${\mathbb R}\{x,y\}$.
We define the order \,$\nu (\varphi)$\, of contact with the identity as
\[ \nu (\phi) = \max \bigg\{ l \in {\mathbb N} \ \ ;  \sum_{j+k \geq 1} a_{j,k}\, x^{j} y^{k} - x \in {\mathfrak m}^{l}
\quad \text{and} \quad \sum_{j+k \geq 1} b_{j,k}\, x^{j} y^{k} - y \in {\mathfrak m}^{l} \bigg\}.  \]
We define the group
\,$\mathrm{Diff}_{1}({\mathbb R}^{2},0)=\{ \varphi \in \mathrm{Diff}({\mathbb R}^{2},0) \  ; \
\nu(\varphi) \geq 2\}$\,
of tangent to the identity elements. Consider
the group \,$j^{k} \mathrm{Diff}_{1}({\mathbb R}^{2},0)$\, of $k$-jets of tangent to the identity
elements for \,$k \in {\mathbb Z}^{+}$. It  is the subgroup of
\,$\mathrm{GL}({\mathfrak m}/{\mathfrak m}^{k+1}, {\mathbb R})$\, defined by the action
by composition of \,$\mathrm{Diff}_{1}({\mathbb R}^{2},0)$\, in \,${\mathfrak m}/{\mathfrak m}^{k+1}$\, where
a map \,$\varphi \in \mathrm{Diff}_{1}({\mathbb R}^{2},0)$\, induces a linear map
\[ \begin{array}{ccc}
{\mathfrak m}/{\mathfrak m}^{k+1} & \longrightarrow &  {\mathfrak m}/{\mathfrak m}^{k+1} \\
g + {\mathfrak m}^{k+1} & \longmapsto & g \circ \varphi + {\mathfrak m}^{k+1}.
\end{array}
\]
The group \,$j^{k} \mathrm{Diff}_{1}({\mathbb R}^{2},0)$\,
is a  contractible matrix algebraic Lie group composed of unipotent elements for any
\,$k \in {\mathbb Z}^{+}$. It is nilpotent since
\,$\nu([\varphi, \eta]) > \max \{ \nu(\varphi), \nu(\eta)\}$\, for all
\,$\varphi, \eta \in \mathrm{Diff}_{1}({\mathbb R}^{2},0)$.
Thus we can apply Equation (\ref{equ:BCH}) in
\,$j^{k} \mathrm{Diff}_{1}({\mathbb R}^{2},0)$\, and it extends to
\,$\mathrm{Diff}_{1}({\mathbb R}^{2},0)$\,
since \,$\mathrm{Diff}_{1}({\mathbb R}^{2},0)$ is the projective limit
$\lim_{\leftarrow} j^{k} \mathrm{Diff}_{1}({\mathbb R}^{2},0)$.
There is a subtility in this construction since in general
Formula (\ref{equ:BCH}) does not define a convergent power series and
it is then necessary to define the exponential of a formal vector field
via Taylor's formula
\begin{equation}
\label{equ:exp}
\mathrm{exp} (Z) = (x \circ \mathrm{exp} (Z), y \circ \mathrm{exp} (Z)) =
\left( x + \sum_{j=1}^{\infty} \frac{Z^{j} (x)}{j!} \,, y + \sum_{j=1}^{\infty} \frac{Z^{j} (y)}{j!} \right)
\end{equation}
where \,$Z$\, is understood as an operator on functions and
\,$Z^{j}$\, is the \,$j$th iterate of \,$Z$.
The convergence of the Baker-Campbell-Hausdorff is not a problem in the following since in all
subsequent applications of
Formula (\ref{equ:BCH}) the sum is finite.

%%%%%%%%%%%%%%%%%%%%%%%%%%%%%%%%%%%%%%%%%%%%%%%%%%%%%%%

\begin{proof}[Proof of Theorem \ref{teo:ntor}]
Consider \,$l=\sigma$\, in the construction of \,${\mathcal G}$.
We define \,$G = \mathrm{exp}({\mathcal G})$.
It is a subset of \,$\mathrm{Diff}_{+}^{\omega}({\mathbb S}^{2})$\,
by compactness of \,${\mathbb S}^{2}$.
Since \,$0 \in \mathrm{Fix}(G)$\, and the vanishing order of any vector field in
\,${\mathcal G}$\, at \,$(0\,,0)$\, is higher than \,$2$\,
then we can consider \,$G$\, as a subset of
\,$\{ \varphi \in \mathrm{Diff}_{1}({\mathbb R}^{2},0) \ ; \ \nu (\varphi) \geq 3 \}$\,
by Equation (\ref{equ:exp}).
The use of Equation (\ref{equ:BCH})
implies that since \,${\mathcal G}$\, is a Lie algebra then \,$G$\, is a group.
In particular \,$G$\, is a subgroup of both \,$\mathrm{Diff}_{1}({\mathbb R}^{2},0)$\,
and the group of real analytic diffeomorphisms of the sphere.
The torsion-free nature of \,$G$\, is a consequence of the analogous property
for \,$\mathrm{Diff}_{1}({\mathbb R}^{2},0)$\,
(indeed \,$\nu (\phi) = \nu (\phi^{k})$\, for all \,$\phi \in
\mathrm{Diff}_{1}({\mathbb R}^{2},0) - \{Id\}$\,
and \,$k \in {\mathbb Z}^{*}$).

There are no non-trivial elements in \,$G$\, with arbitrarily high order of
contact with the identity; otherwise the analogous property is satisfied
for \,${\mathcal G}$\, by Equation (\ref{equ:exp}) and this is impossible since
\,${\mathcal G}$\, is finite dimensional.
Therefore \,$G$\, can be interpreted as a subgroup of
\,$j^{k} \mathrm{Diff}_{1}({\mathbb R}^{2},0)$\, for some \,$k \in {\mathbb Z}^{+}$\,
big enough.
There exists an equivalence of categories between the finite dimensional
nilpotent Lie algebras and the unipotent affine algebraic groups over fields
of characteristic \,$0$\,
(cf. \cite{Demazure-Gabriel}[IV, \S \ 2, n\textordmasculine \ 4, Corollaire 4.5]).
Therefore the group \,$j^{k} G$\, induced by \,$G$\, in \,$j^{k} \mathrm{Diff}_{1}({\mathbb R}^{2},0)$\,
is a connected affine algebraic group for any \,$k \in {\mathbb Z}^{+}$.
In this context the nilpotency class of the group and its Lie algebra coincide
(cf. \cite{Demazure-Gabriel}[IV, \S \ 4, n\textordmasculine \ 1, Corollaire 1.6]).
We deduce that \,$G$\, is a $\sigma$-step nilpotent group.
\end{proof}

\begin{remark}
It is easy to show by hand that \,$\langle X, \hdots, \alpha^{l-j-1} X \rangle_{\mathbb R}$\,
is the Lie algebra of
\,$G_{(j)}$\, for any \,$0 < j <l$\, and \,$G_{(l)}=\{Id\}$.
\end{remark}

\begin{proof}[Proof of Theorem \ref{teo:c1pid}]
Consider \,$l=\sigma$\, in the construction of \,${\mathcal G}$.
We define the group
\[ J =
\langle \mathrm{exp}(t X),  \mathrm{exp}(t \alpha X), \hdots,  \mathrm{exp}(t \alpha^{\sigma-1} X),  \mathrm{exp}(t Y) \rangle \]
for some fixed \,$t \in {\mathbb R}^{*}$\,  small.
Analogously as in the proof of Theorem \ref{teo:ntor}
we can consider that \,$G$\, and \,$J$\, are subgroups of \,$j^{k} \mathrm{Diff}_{1}({\mathbb R}^{2},0)$\,
for some \,$k \in {\mathbb Z^{+}}$\, big enough.
Let \,$H$\, be the smallest algebraic group in \,$j^{k} \mathrm{Diff}_{1}({\mathbb R}^{2},0)$\,
containing \,$J$.
The group \,$H$\, is a connected unipotent algebraic group contained in \,$G$.
The Lie algebra \,${\mathfrak h}$\, of \,$H$\, coincides with \,${\mathcal G}$\, by construction.
As a consequence the groups \,$G$\, and \,$H$\, also coincide.
The group \,$J$\,  is $\sigma$-step nilpotent since its algebraic closure is.
We can obtain $\sigma$-step nilpotent subgroups of \,$\mathrm{Diff}_{+}^{\omega}({\mathbb R}^{2},0)$\,
sharing \,$(0\,,0)$\, as a fixed point and with generators arbitrarily and uniformly close to the
identity map by Proposition \ref{pro:conv1} below.
\end{proof}

The next result completes the proof of Theorem \ref{teo:c1pid}.
\begin{pro}
\label{pro:conv1}
Let \,$Z \in {\mathcal G}$.
The diffeomorphism \,$\mathrm{exp}(t Z)$\,
converges to the identity map in the strong $C^{1}$-topology when
\,$t \to 0$.
\end{pro}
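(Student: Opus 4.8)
The plan is to exploit that $Z$ is a real analytic---hence $C^{\infty}$---vector field on the \emph{compact} manifold $\mathbb{S}^{2}$, so that its flow is as well behaved as possible, and then to transfer the resulting convergence from the sphere to the plane through the point at infinity. First I would recall that, by compactness of $\mathbb{S}^{2}$, the field $Z$ is complete and its flow $\varphi(t,x)=\mathrm{exp}(tZ)(x)$ is a $C^{\infty}$ map $\mathbb{R}\times\mathbb{S}^{2}\to\mathbb{S}^{2}$, jointly smooth in $(t,x)$. Since every vector field in $\mathcal{G}$ vanishes at both $0$ and $\infty$, the diffeomorphism $\mathrm{exp}(tZ)$ fixes these two points for all $t$; in particular it preserves $\mathbb{R}^{2}=\mathbb{S}^{2}-\{\infty\}$ and restricts to a diffeomorphism of the plane, which is the object appearing in Theorem \ref{teo:c1pid}. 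Being isotopic to the identity through $\mathrm{exp}(sZ)$, it is orientation preserving for $t$ small.

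Next I would establish convergence on the sphere. Fix an auxiliary Riemannian metric on $\mathbb{S}^{2}$; as $\mathbb{S}^{2}$ is compact, the strong and weak $C^{1}$-topologies on $\mathrm{Diff}(\mathbb{S}^{2})$ coincide and are induced by the supremum norms of a map and of its first derivative. Both $\varphi$ and its spatial derivative $D_{x}\varphi(t,x)$ are continuous on the compact slab $[-1,1]\times\mathbb{S}^{2}$, hence uniformly continuous there. Evaluating at $t=0$, where $\varphi(0,\cdot)=\mathrm{Id}$ and $D_{x}\varphi(0,\cdot)$ is the identity on each tangent space, we obtain that $\mathrm{exp}(tZ)\to\mathrm{Id}$ and $D\,\mathrm{exp}(tZ)\to\mathrm{Id}$ uniformly on $\mathbb{S}^{2}$ as $t\to0$. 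This is precisely $C^{1}$-convergence to the identity in $\mathrm{Diff}(\mathbb{S}^{2})$.

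Finally I would transfer this to the plane. Near $\infty$ I would write $\mathrm{exp}(tZ)=\iota\circ\tilde{\varphi}_{t}\circ\iota$, where $\iota(z)=1/z$ is the fixed analytic inversion sending a neighborhood of $\infty$ to a neighborhood of $0$ and $\tilde{\varphi}_{t}$ is the expression of $\mathrm{exp}(tZ)$ in the chart at $\infty$. By the previous step $\tilde{\varphi}_{t}\to\mathrm{Id}$ in $C^{1}$ near $0$, and by construction $\tilde{\varphi}_{t}$ fixes $0$ with contact of order at least two with the identity, a consequence of the high-order vanishing at $\infty$ of the fields of $\mathcal{G}$. This yields a bound $\|\tilde{\varphi}_{t}(w)-w\|\leq C_{t}\,|w|^{2}$ near $0$ with $C_{t}\to0$ as $t\to0$, whence a direct estimate of $\iota\circ\tilde{\varphi}_{t}\circ\iota$ shows that both the displacement $\mathrm{exp}(tZ)(z)-z$ and the deviation $D\,\mathrm{exp}(tZ)(z)-\mathrm{Id}$ stay bounded for large $|z|$ by quantities tending to $0$ as $t\to0$; the divergence of the $D\iota$ factors near $\infty$ is exactly compensated by the quadratic closeness of $\tilde{\varphi}_{t}$ to the identity. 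Together with the control on the compact part of $\mathbb{R}^{2}$ furnished by the sphere convergence, this gives uniform $C^{1}$-convergence of $\mathrm{exp}(tZ)$ to the identity over all of $\mathbb{R}^{2}$, which is the asserted strong $C^{1}$-convergence. I expect this transfer near $\infty$ to be the only genuine difficulty: with merely first-order contact the displacement would grow linearly in $|z|$ and uniform convergence would break down, so the high-order vanishing of the fields of $\mathcal{G}$ at $\infty$ is the essential ingredient.
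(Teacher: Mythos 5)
Your proposal is correct and follows essentially the same route as the paper: convergence on the compact sphere is immediate, and the passage to the plane is handled in the chart $w=1/z$ at infinity, where the high-order vanishing of the fields of ${\mathcal G}$ compensates the blow-up of the inversion in both the $C^{0}$ and $C^{1}$ estimates. The only minor discrepancy is that you claim contact of order at least two at $\infty$, while the construction actually gives order at least three (the paper's expansion starts at $j+k\geq 3$); your weaker bound still suffices for the stated $C^{1}$ conclusion, so this does not affect the argument.
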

\begin{proof}
The result is obviously true for the $C^{1}$-topology for diffeomorphisms
of the sphere. We will show that the result still holds true in
\,$\mathrm{Diff}^{1}({\mathbb R}^{2},0)$.

Let \,$\eta_{t} = \mathrm{exp}(t Z)$. The diffeomorphism \,$\eta_{t}$\, converges
to the identity map in any compact set of the plane.
Let us study the properties of the one parameter flow \,$\eta_{t}$\,
in the neighborhood of \,$\infty$.
Let \,$z$\, be a complex coordinate in the Riemann sphere.
We consider the coordinate \,$w=1/z$\, in order to study the
behavior of the diffeomorphisms in the neighborhood of the point
\,$z=\infty$.
The vector field
\,$\partial /\partial z$\, is equal to \,$-w^{2} \partial /\partial w$.
We obtain
\[ \frac{\partial}{\partial x} = - (\hat{x}^{2} - \hat{y}^{2}) \frac{\partial}{\partial \hat{x}}
-  2 \,\hat{x} \, \hat{y} \frac{\partial}{\partial \hat{y}} \quad \text{and} \quad
\frac{\partial}{\partial y} = 2 \,\hat{x} \,\hat{y} \frac{\partial}{\partial \hat{x}}
- (\hat{x}^{2} - \hat{y}^{2}) \frac{\partial}{\partial \hat{y}} \]
where \,$z=x+i y$\, and \,$w = \hat{x} + i \hat{y}$.
The vanishing order of \,$Z$\, at \,$\infty$\, is higher than \,$2$.
The expression \,$\hat{\eta}_{t} = (1/z) \circ \eta \circ (1/w)$\,
of \,$\eta_{t}$\, satisfies
$$\hat{\eta}_{t}(w) = w + t \! \! \sum_{j+k \geq 3} c_{j,k}(t)\, \hat{x}^{j} \, \hat{y}^{k}$$%
in the  coordenate \,$w$\,
where \,$c_{j,k}$\, is a polynomial with complex coefficients for \,$j + k \geq 3$.
Since
\[ \eta_{t}(z)- z = \eta_{t} \left( \frac{1}{w} \right) - \frac{1}{w}  =
\frac{1}{\hat{\eta}_{t}(w)} - \frac{1}{w} =
\frac{1}{w + t O(w^{3}) } -  \frac{1}{w} = O(t w) \]
then \,$\eta_{t}$\, converges to the identity map in the neighborhood of \,$\infty$\, in the
$C^{0}$-topology.
The expression
\[ \frac{\partial}{\partial x} (\eta_{t}(z) - z) =
\left( - (\hat{x}^{2} - \hat{y}^{2}) \frac{\partial}{\partial \hat{x}}
-  2 \hat{x} \hat{y} \frac{\partial}{\partial \hat{y}}\right)
\left( \eta_{t} \left( \frac{1}{w} \right) - \frac{1}{w} \right) = \]
\[ \left( - (\hat{x}^{2} - \hat{y}^{2}) \frac{\partial}{\partial \hat{x}}
-  2 \hat{x} \hat{y} \frac{\partial}{\partial \hat{y}}\right)
\left(  \frac{1}{\hat{\eta}_{t}(w)} - \frac{1}{w} \right)=
O(t w^{2}) \]
and the analogue for \,$(\partial /\partial y)(\eta_{t}(z) - z)$\, imply that
\,$\mathrm{exp}(tZ)$\, converges to \,$Id$\, in the neighborhood of \,$\infty$\,
in the $C^{1}$-topology when \,$t \to 0$.
\end{proof}

\begin{remark}
It is easy to check out that \,$\lim_{t \to 0} \mathrm{exp}(t Z) = Id$\, in the strong
$C^{k}$-topology for any \,$k \in {\mathbb Z}^{+}$.
\end{remark}

%%%%%%%%%%%%%%%%%%%%%%%%%%%%%%%%%%%%%%%%%%%%%%%%%%%%%%%%%%%%%%%
%%%%%%%%%%%%%%%%%%%%%%%%%%%%%%%%%%%%%%%%%%%%%%%%%%%%%%%%%%%%%%%

\vglue30pt


\begin{thebibliography}{9999}

\def\cprime{$'$}
\providecommand{\bysame}{\leavevmode\hbox to3em{\hrulefill}\thinspace}
\providecommand{\MR}{\relax\ifhmode\unskip\space\fi MR }
\providecommand{\MRhref}[2]{%
  \href{http://www.ams.org/mathscinet-getitem?mr=#1}{#2}
}
\providecommand{\href}[2]{#2}

\bibitem{bo01}
C.~Bonatti, \emph{Un point fixe commun pour des diff\'eomorphismes commutants
  de \,$\Ss$}, Ann. of Math. (2) \textbf{129} (1989), no.~1, 61--69.


\bibitem{bro01}
L.E.J. Brouwer, \emph{Beweis des ebenen {T}ranslationssatzes}, Math. Ann. \textbf{72}
  (1912), no.~1, 37--54.

\bibitem{br05}
M.~Brown, \emph{A short short proof of the {C}artwright-{L}ittlewood theorem},
  Proc. Amer. Math. Soc. \textbf{65} (1977), no.~2, 372.

\bibitem{cl01}
M.L. Cartwright and J.E. Littlewood, \emph{Some fixed point theorems}, Ann. of
  Math. (2) \textbf{54} (1951), 1--37.


\bibitem{Corwin-Greenleaf}
L.J. Corwin and F.P. Greenlaf, \emph{Representations of nilpotent {L}ie groups and their
applications. {P}art {I}}, Cambridge Studies in Advanced Mathematics \textbf{18},
Cambridge University Press (1990), viii+269.

\bibitem{Demazure-Gabriel}
M. Demazure and P. Gabriel, \emph{Groupes alg\'ebriques. {T}ome {I}: {G}\'eom\'etrie
alg\'ebrique, g\'en\'eralit\'es, groupes commutatifs}, Masson \& Cie, \'Editeur, Paris
(1970), xxvi+700.


\bibitem{dff02}
S. ~Druck, F. ~Fang, and S. ~Firmo, \emph{Fixed points of discrete nilpotent group actions on \,$\Ss$},
Ann. Inst. Fourier, Grenoble \textbf{52}, 4 (2002), 1075--1091.

\bibitem{ff01}
B. Farb and J. Franks, \emph{Groups of homeomorphisms of one-manifolds III: nilpotent subgroups},
Erg. Theory and Dynamical Systems \textbf{23} (2003), 1467--1484.

\bibitem{fi05}
S.~Firmo, \emph{Localisation des points fixes communs pour des
  diff\'eomorphismes commutants du plan}, Bull Braz Math Soc, New Series
  \textbf{42(3)} (2011), 373--397.

\bibitem{Fr-Hand:dis}
J. Franks and M. Handel, \emph{Distortion Elements in Group actions on surfaces},
Duke Math. J. \textbf{131} (2006), 441--468.


\bibitem{fhp01}
J.~Franks, M.~Handel and K.~Parwani, \emph{Fixed points of abelian actions on
  \,$\Ss$}, Ergodic Theory Dynam. Systems \textbf{27} (2007), no.~5,
  1557--1581.

\bibitem{fhp02}
J.~Franks, M.~Handel and K.~Parwani, \emph{Fixed points of abelian actions},
J. Mod. Dyn. \textbf{1(3)} (2007), 443--464.

\bibitem{Ghys-identite}
E.~Ghys, \emph{Sur les groupes engendr\'es par des diff\'eomorphismes
proches de l'identit\'e}, Bol. Soc. Brasil. Mat. (N.S.) \textbf{24}
(1993), no.~2, 137--178.

\bibitem{gui01} L. Guillou, \emph{Three Brouwer fixed point theorems for homeomorphisms of the plane}, arXiv: 1211.3534v1 (2012).

\bibitem{hami01}
O.H. Hamilton, \emph{A short proof of the {C}artwright-{L}ittlewood fixed point
  theorem}, Canadian J. Math. \textbf{6} (1954), 522--524.

  \bibitem{han01}
M.~Handel, \emph{Commuting homeomorphisms of \,{$\mathbb{S}\sp 2$}}, Topology \textbf{31}
  (1992), no.~2, 293--303.

\bibitem{Karga}
M.I. Kargapolov and Ju.I. Merzljakov, \emph{Fundamentals of the theory of groups.
Transl. from the 2nd Russian ed. by Robert G. Burns.},
Graduate Texts in Mathematics. \textbf{62}. New York-Heidelberg-Berlin:
Springer-Verlag. XVII (1979), 203 p.


\bibitem{elon02} E. Lima, \emph{Commuting vector fields on $S^2$},
Proc. Amer. Math. Soc.
\textbf{15}, No. 1 (1964), 138--141.

\bibitem{navas01}
A. Navas, \emph{Sur les rapprochements par conjugaison en dimension 1 et classe \,$C^{1}$},
arXiv:1208.4815v1 (2012).

\bibitem{pt01} J.F. Plante and W. Thurston, \emph{Polynomial growth in holonomy groups of foliations},
Comment. Math. Helvetici \textbf{51} (4) (1976), 567--584.

\bibitem{ra01} M.S. Raghunathan, \emph{Discrete subgroups of Lie groups}, Springer, Berlim, 1972.

\bibitem{reif01} E.R. Reifenberg, \emph{On the Cartwright-Littlewood fixed point theorem}, Ann. of Math. (2), \textbf{61} (1955), 137--139.

\bibitem{pl01}
J.F. Plante, \emph{Fixed points of Lie group actions on surfaces}, Ergodic Theory Dynam. Systems
\textbf{6} (1986), no.~1, 149--161.

\bibitem{ribon01}
J. Ribón, \emph{Fixed points of nilpotent actions on \,$\Ss$},  arXiv:1208.4510v1 (2012).


\bibitem{ss01}
E.~Shi and B.~Sun, \emph{Fixed point properties of nilpotent group actions on $1$-arcwise connected continua}, Proc. Amer. Math. Soc. \textbf{137} (2009), no.~2, 771--775.


\bibitem{shub01}
M. Shub, \emph{Global stability of dynamical systems}, Springer-Verlag, 1987.

\end{thebibliography}
\end{document}